\documentclass{amsart}
\usepackage{mathrsfs,amssymb}
\parindent = 0 cm
\newtheorem{theorem}{Theorem}[section]
\newtheorem{lemma}[theorem]{Lemma}
\newtheorem{proposition}[theorem]{Proposition}
\newtheorem{corollary}[theorem]{Corollary}
\newtheorem{definition}[theorem]{Definition}

\theoremstyle{definition}

\numberwithin{equation}{section}
\newcommand{\CC}{\mathbb{C}}
\newcommand{\Sch}{\mathscr{S}}
\newcommand{\prot}{{\lozenge}}
\newcommand{\RR}{\mathbb{R}}
\newcommand{\ZZ}{\mathbb{Z}}
\newcommand{\cO}{\mathcal{O}}
\newcommand{\cA}{\mathcal{A}}

\newcommand{\DD}{\mathbb{D}}
\newcommand{\dd}{\partial}
\newcommand{\cD}{\mathcal{D}}
\newcommand{\cZ}{\mathcal{Z}}

\newcommand{\bH}{\mathbf{H}}
\newcommand{\cH}{\mathcal{H}}
\renewcommand{\[}{\begin{equation}}
\renewcommand{\]}{\end{equation}}
\DeclareMathOperator{\const}{const}
\DeclareMathOperator{\ran}{ran}
\title[A product for discrete analytic functions]{On discrete analytic
functions: Products, Rational Functions, and some Associated
Reproducing Kernel Hilbert Spaces}

\begin{document}
\author[D. Alpay]{Daniel Alpay}
\address{(DA) Department of Mathematics \newline
Ben Gurion University of the Negev \newline P.O.B. 653, \newline
Be'er Sheva 84105, \newline ISRAEL} \email{dany@math.bgu.ac.il}
\author[P. Jorgensen]{Palle Jorgensen}
\address{(PJ)
Department of Mathematics\newline 14 MLH \newline The University
of Iowa, Iowa City,\newline IA 52242-1419 USA}
\email{palle-jorgensen@math.uiowa.edu}
\author[R. Seager]{Ron Seager}
\author[D. Volok]{Dan Volok}
\address{ (RS) and (DV) Mathematics Department\newline 138 Cardwell
Hall\newline Kansas State University,\newline Manhattan, KS 66506}
\email{danvolok@math.ksu.edu}

\thanks{D. Alpay thanks the
Earl Katz family for endowing the chair which supported his
research. The research of the authors was supported in part by the
Binational Science Foundation grant 2010117.}

\keywords{Discrete analytic functions, $2D$ lattice $\ZZ^2$,
reproducing kernel Hilbert space, Szeg\"o and Bergman,
multipliers, Cauchy integral representation, difference
operators, Lie algebra of operators, Fourier transform,
realizable linear  systems, expandable functions, rational
functions, Cauchy-Riemann equations, Cauchy-Kovalevskaya theorem,
Schur analysis, Fock space} \subjclass{Primary 30G25, 30H20,
32A26, 43A22 , 46E22, 46L08, 47B32, 47B39;  secondary: 20G43}
\maketitle

\begin{abstract}
We introduce a family of discrete analytic functions, called
expandable discrete analytic functions, which includes discrete
analytic polynomials, and define two products in this family. The
first one is defined in a way similar to the Cauchy-Kovalevskaya
product of hyperholomorphic functions, and allows us to define
rational discrete analytic functions. To define the second
product we need a new space of entire functions which is
contractively included in the Fock space. We study in this space
some counterparts of Schur analysis.
\end{abstract}
\tableofcontents
\section{Introduction}
\setcounter{equation}{0} In this paper, we explore a spectral
theoretic framework for representation of discrete analytic
functions. While the more familiar classical case of analyticity
plays an important role in such applications as the theory of
systems and their realizations, carrying over this to the case of
discrete analytic functions involves a number of operator- and
spectral theoretic subtleties; for example, we show that the
reproducing kernel, in the discrete case, behaves quite
differently from the case of the more familiar classical kernels
of Szeg\"o and Bergman. We introduce a reproducing kernel of
discrete analytic functions, which is naturally isomorphic to a
Hilbert space of entire functions contractively included in the
Fock space. The pointwise product of two discrete analytic
functions need not be discrete analytic, and we introduce two
products, each taking into account the specificities of discrete
analyticity.\\

The first product is determined by a solution to an extension
question for rational functions, extending from $\ZZ_+$ to the
right half-plane in the $2D$ lattice $\ZZ^2$. Our solution to the
extension problem leads to a new version of the "multiplication
operator" $\cZ$. We further prove that the product in $\cA$ will
be defined directly from $\cZ$. This in turn yields a
representation of the multiplier problem for the reproducing
kernel Hilbert space $\mathcal H$.  While it is possible to think
of the reproducing kernel Hilbert space $\cH$ as an extension of
one of the classical Beurling-Lax theory for Hardy space, the
case for discrete analytic function involves a new and different
spectral analysis, departing from the classical case in several
respects. For example, we show that the new multiplication
operator $\cZ$ is part of an infinite-dimensional non-Abelian Lie
algebra of operators acting on the reproducing kernel Hilbert
space $\cH$. With this, we are able to find the spectral type of
the operators described above.\\

The theory of  discrete analytic functions has drawn a lot of
attention recently, in part because of its connections with
electrical networks and random walks. In the case of functions
defined on the integer grid the notion of discrete analyticity
was introduced by J. Ferrand (Lelong) in \cite{MR0013411}:

\begin{definition}
\label{maindef}
A function $f:\ZZ^2\longrightarrow\CC$ is said to
be {\em discrete analytic}
 if
\[
\label{DCR} \forall (x,y)\in \ZZ^2,\qquad
\dfrac{f(x+1,y+1)-f(x,y)}{1+i}=\dfrac{f(x+1,y)-f(x,y+1)}{1-i}.\]
\end{definition}

 The properties of discrete analytic functions were
extensively investigated by R. J. Duffin in \cite{MR0078441}. In
this work it was shown that discrete analytic functions share many
important properties of the classical continuous analytic
functions in the complex domain, such as Cauchy integral
representation and the maximum modulus principle. More recently,
the notion of discrete analyticity and accompanying results were
extended by C. Mercat to the case of functions defined on
arbitrary graph embedded in an orientable surface; see
\cite{MR1824204}.\\

The concept of \underline{discrete} analyticity seems to cause
significant difficulties in the following regard: the pointwise
product of two discrete analytic functions is not necessarily
discrete analytic. For example, the functions $z:=x+iy$ and $z^2$
are discrete analytic in the sense of Definition \ref{maindef},
but $z^3$ is not. Thus a natural question arises, how to describe
all complex polynomials in two variables $x,y$ whose restriction
to the integer grid $\ZZ^2$ is discrete analytic, and, more
generally, rational discrete analytic functions. This problem was
originally considered by R.~Isaacs, using a definition of discrete
analyticity (the so-called monodiffricity), which is
algebraically simpler than Definition \ref{maindef}. In
\cite{MR0052526} R. Isaacs has posed a conjecture that {\em all
monodiffric rational functions are polynomials.} This conjecture
was disproved by C. Harman in \cite{MR0361111}, where an explicit
example of a non-polynomial
monodiffric function, rational in one quadrant, was constructed.\\

The results of R. Isaacs and C. Harman suggest that in the setting
of discrete analytic functions the notion of rationality based on
the pointwise product is not a suitable one. In order to
introduce a class of rational discrete analytic functions, which
would be sufficiently rich for applications, one needs a suitable
definition of the product. This is one of the main objective of
the present paper to introduce two products in the setting of
discrete analytic functions.\\

{\bf Organization:} The paper is organized as follows. Sections
\ref{sec2} through \ref{sec4} cover our preparation of the
discrete framework: analysis and tools. In Definition
\ref{def21}, the notion of "discrete analyticity" makes a key
link between the representation in the two integral variables
($x$, $y$) in the $2$-lattice $\mathbb Z^2$ , thus making precise
the interaction between the two integral variables $x$ and $y$
implied by analyticity. The notion of analyticity in the discrete
case is a basic rule (Definition \ref{maindef}) from which one
makes precise contour-summations around closed loops in $\mathbb
Z^2$.  In section \ref{sec2}, we introduce a basis system of
polynomials (which will appear to be restriction to the positive
real axis of discrete analytic polynomials $\zeta_n$ defined in
section \ref{sec5}), see equation \eqref{poly}. We further
introduce a discrete Fourier transform for functions of $(x,y)$
in the $2$-lattice $\mathbb Z^2$, and in the right half-plane
$\mathbb H_+=\mathbb Z_+\times \ZZ$ in $\mathbb Z^2$. The Fourier
representation in $\mathbb H_+$   is then used in sections
\ref{sec3} and \ref{sec4}; where we study extensions from $\ZZ$
to $\ZZ^2$, and from $\ZZ_+$ to $\mathbb H_+$. We begin our
analysis in section \ref{sec4} with some lemmas for the
polynomial case. Theorem \ref{maindef2} offers a discrete version
of the Cauchy-Riemann equations; we show that the discrete
analytic functions are defined as the kernel of a del-bar operator
$\overline{\mathcal D}$; to be studied in detail in section
\ref{sec7}, as part of a Lie algebra representation. In Theorem
\ref{t1} we show that every polynomial function on $\ZZ$ has a
unique discrete analytic extension to $\ZZ^2$. Sections
\ref{sec5} and \ref{sec6} deal with expandable functions
(Definition \ref{5.1}), and section \ref{sec7} rational discrete
analytic functions. The expandable functions are defined from a
basis system of discrete analytic polynomials $\zeta_n$ from
section \ref{sec2}, and a certain Cauchy-estimate, equation
\eqref{esti}. We shall need two products defined on expandable
functions, the first is our Cauchy-Kovalesvskaya product (in
section \ref{sec6}), and the second (section \ref{next}) is
defined on algebra generated by the discrete analytic polynomials
$\zeta_n$. Its study makes use of realizations from linear
systems theory. The definition of the Cauchy-Kovalesvskaya
product relies on uniqueness of extensions for expandable
functions (Corollary \ref{c6.3}). Hence it is defined first for
expandable functions, and then subsequently enlarged; first to
the discrete analytic rational functions (section \ref{sec7}),
and then to a new reproducing kernel Hilbert space in section
\ref{sec8}. The latter reproducing kernel Hilbert space  has its
kernel defined from the discrete analytic polynomials $\zeta_n$;
see \eqref{rkda}. The study of the reproducing kernel Hilbert
space  in turns involves such tools from analysis as
representations of Lie algebras (Theorem \ref{tm34}), and of
$C^*$-algebras (Theorem \ref{tm84}).\\

\section{Polynomials and rational functions on the set of
integers} \label{sec2} \setcounter{equation}{0}

In what follows, $\Omega$ stands for one of two sets: $\ZZ$ or
$\ZZ_+$, and $x^{[n]}$ denotes the polynomial of degree $n$
defined by
\[
\label{poly}
x^{[n]}:=\prod_{j=0}^{n-1}(x-j)\] (if $n=0,$
$x^{[0]}:=1$).\\

They have the generating function
\[
(1+t)^x=\sum_{k=0}^x x^{[k]}\frac{t^k}{k!},
\]
and their discrete analytic extensions $\zeta_n(x,y)$ are studied
in section \ref{sec5}.\\

The purpose of this section is to prove (see Theorem \ref{lim1}
below) that any rational function $f\,\,:\,\,\mathbb
Z_+\,\,\longrightarrow\,\,\mathbb C$ (see Definition
\ref{defrat1}) has a unique representation
\[
f(x)=\sum_{n=1}^\infty \widehat{f}(n)x^{[n]},
\]
where
\[
\limsup_{n\rightarrow\infty}\left(n!|\widehat{f}(n)|\right)^{\frac{1}{n}}\le
1.
\]
\begin{definition}
\label{def21}
The linear {\em difference operator} $\delta$ on the
space of functions $f:\Omega\longrightarrow\CC$  is defined by
$$(\delta f)(x)=f(x+1)-f(x),\quad x\in\Omega.$$
\end{definition}

\begin{proposition}\label{tay1}
Let $f:\ZZ_+\longrightarrow \CC$. Then, for every $x\in\ZZ_+$ the
series $$\check f(x):=\sum_{n\in\ZZ_+}f(n)x^{[n]}$$ has a finite
number of non zero terms.
\end{proposition}
\begin{proof}
In view of \eqref{poly},
$$\forall x\in\ZZ_+,\,\,\forall n\in\ZZ_+,\qquad
x<n\Longrightarrow x^{[n]}=0.$$

Therefore, for every $x\in\ZZ_+$ the series $\check f(x)$
contains at most $x+1$ non-zero terms.
\end{proof}

\begin{proposition}
\label{tay2} Let $f:\ZZ_+\longrightarrow \CC$. Then there exists a
unique function $\hat f:\ZZ_+\longrightarrow \CC$  such that
\[\label{tay}\forall x\in\ZZ_+,\qquad f(x)=\sum_{n\in\ZZ_+} \hat f(n)x^{[n]}.\]  The
function $\hat f(n)$ is given by
\[\label{fou1}\hat f(n)=\dfrac{(\delta^nf)(0)}{n!}.\]
\end{proposition}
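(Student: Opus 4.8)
The plan is to prove existence and uniqueness of the coefficient function $\hat f$ together with the explicit formula \eqref{fou1} by exploiting the fact, established in Proposition \ref{tay1}, that the expansion $\sum_{n\in\ZZ_+}\hat f(n)x^{[n]}$ is, for each fixed $x\in\ZZ_+$, a \emph{finite} sum (only the terms with $n\le x$ survive). This turns the problem into a triangular linear system, which can be solved recursively.

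First I would establish the key algebraic identity describing the action of $\delta$ on the basis polynomials: from \eqref{poly} one has $x^{[n+1]}=x^{[n]}(x-n)$, hence
\[
\label{deltapow}
(\delta\, x^{[n]})=(x+1)^{[n]}-x^{[n]}=n\,x^{[n-1]},\qquad n\ge 1,
\]
while $\delta\, x^{[0]}=0$. This is the discrete analogue of $\frac{d}{dx}x^n=nx^{n-1}$, and it is proved by factoring $(x+1)^{[n]}=(x+1)x^{[n-1]}$ and $x^{[n]}=(x-n+1)x^{[n-1]}$ and subtracting. Iterating \eqref{deltapow} gives $(\delta^k x^{[n]})(0)=n!\,\delta_{k,n}$ (Kronecker delta), since evaluating $x^{[m]}$ at $x=0$ yields $0$ for $m\ge 1$ and $1$ for $m=0$.

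Next, for existence I would define $\hat f(n):=(\delta^n f)(0)/n!$ and verify \eqref{tay} directly. Fix $x\in\ZZ_+$; by Proposition \ref{tay1} it suffices to check the identity $f(x)=\sum_{n=0}^{x}\hat f(n)x^{[n]}$. The clean way is induction on $x$: for $x=0$ both sides equal $f(0)$; for the inductive step one applies $\delta$ to the candidate identity and uses \eqref{deltapow} to see that $\delta$ of the right-hand side reproduces the analogous expansion for $\delta f$ with coefficients $(\delta^{n+1}f)(0)/n! = (n+1)\widehat{f}(n+1)$, matching the expansion of $\delta f$; combined with agreement at $x=0$ this forces agreement everywhere on $\ZZ_+$. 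Alternatively, one can appeal to the generating-function identity $(1+t)^x=\sum_k x^{[k]}t^k/k!$ quoted in the text, but the inductive argument is self-contained. For uniqueness, suppose $\sum_{n\in\ZZ_+}c(n)x^{[n]}=0$ for all $x\in\ZZ_+$; apply $\delta^k$ to both sides, evaluate at $x=0$, and use $(\delta^k x^{[n]})(0)=n!\,\delta_{k,n}$ to conclude $k!\,c(k)=0$, hence $c(k)=0$ for all $k$. Applying this to the difference of two representations of $f$ gives uniqueness, and evaluating the same way recovers the formula $\hat f(k)=(\delta^k f)(0)/k!$.

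I do not expect a serious obstacle here: the whole statement is the discrete Taylor/Newton forward-difference formula, and the only thing to be careful about is the interchange of $\delta$ with the infinite-looking sum — but this is harmless precisely because of Proposition \ref{tay1}, which guarantees the sum is locally finite, so $\delta$ acts termwise without any convergence concern. The mild bookkeeping point is to make sure the induction on $x$ correctly tracks how the finitely many nonzero terms shift under $\delta$; once \eqref{deltapow} is in hand this is routine.
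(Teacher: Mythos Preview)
Your proposal is correct and follows essentially the same route as the paper: uniqueness via iterating the identity $\delta x^{[n]}=nx^{[n-1]}$ and evaluating at $0$, and existence by defining $\hat f(n)=(\delta^n f)(0)/n!$ and proving $f=\tilde f$ by induction on $x$. The only cosmetic difference is that the paper packages the inductive step by proving $(\delta^n f)(x)=(\delta^n\tilde f)(x)$ simultaneously for all $n$, which is exactly what your ``apply $\delta$ and reduce to the expansion of $\delta f$'' amounts to once made precise.
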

\begin{proof} To show the uniqueness of $\hat f(n)$, assume that \eqref{tay} holds
 for some function $\hat f(n)$
and apply  the identity
\[\label{hu2}\delta x^{[n]}=nx^{[n-1]}\]
repeatedly to obtain \eqref{fou1}.\\

Next, let the function $\hat f(n)$ be defined by \eqref{fou1}.
Then, according to Proposition \ref{tay1}, the series
$$\tilde f(x):=\sum_{n\in\ZZ_+} \hat f(n)x^{[n]}$$
converges absolutely for every $x\in\ZZ_+$ and defines a function
$\tilde f:\ZZ_+\longrightarrow\CC.$ Hence \eqref{fou1} holds with
$\tilde f$ replacing $f$ and
$$\forall n\in\ZZ_+,\qquad (\delta^nf)(0)=n! \hat f(n)=(\delta^n\tilde f)(0).$$
 Now one can verify that
\[\label{hu1}\forall x\in\ZZ_+,\,\,\forall n\in\ZZ_+,\qquad
(\delta^nf)(x)=(\delta^n\tilde f)(x)
\]
by induction on $x$: If
$$ \forall n\in\ZZ_+, \qquad (\delta^nf)(x)=(\delta^n\tilde f)(x)$$
then
\begin{multline*}
\forall n\in\ZZ_+, \qquad
(\delta^nf)(x+1)=(\delta^nf)(x)+(\delta^{n+1}f)(x)\\=
(\delta^n\tilde f)(x)+(\delta^{n+1}\tilde f)(x) =(\delta^n\tilde
f)(x+1).\end{multline*} It remains to set $n=0$ in \eqref{hu1} to
obtain \eqref{tay}.
\end{proof}

\begin{definition}
Let $f:\ZZ_+\longrightarrow \CC$. Then the function $\hat
f:\ZZ_+\longrightarrow\CC,$ defined by \eqref{fou1}, is said to be
the Fourier transform of $f$.
\end{definition}

Suppose  that a function $f:\ZZ\longrightarrow\CC$ is such that
$$\forall x\in\ZZ,\qquad f(x)=p(x),$$
where $p(x)$ is a polynomial with complex coefficients. Then such
a polynomial $p(x)$ is unique, and we shall call the function $f$
itself a polynomial. If $f(x)\not\equiv 0,$ the degree of $f(x)$
is the same as the degree of $p(x)$.

\begin{proposition}\label{p01}
Let $f:\ZZ\longrightarrow\CC$ be a polynomial, and let $\hat f
:\ZZ_+\longrightarrow\CC$ be the Fourier transform of the
restriction $f_{|_{\ZZ_+}}$. Then the function $\hat f$ has a
finite support and
$$\forall x\in\ZZ,\qquad f(x)=\sum_{n\in\ZZ_+}\hat f(n)x^{[n]}.$$
\end{proposition}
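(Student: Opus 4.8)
The plan is to combine the existence-and-uniqueness statement of Proposition \ref{tay2} applied to $f_{|_{\ZZ_+}}$ with the polynomial hypothesis on $f$ over all of $\ZZ$. First I would invoke Proposition \ref{tay2} to obtain, for every $x\in\ZZ_+$, the representation $f(x)=\sum_{n\in\ZZ_+}\hat f(n)x^{[n]}$ with $\hat f(n)=(\delta^n f)(0)/n!$. The point is that since $f$ agrees on $\ZZ$ with a polynomial $p$ of some degree $d$, the difference operator eventually kills it: applying the identity $\delta x^{[n]}=nx^{[n-1]}$ (equation \eqref{hu2}) — or rather its consequence that $\delta$ lowers the degree of any polynomial by one — we get $\delta^{d+1}p\equiv 0$ on $\ZZ$, hence $(\delta^n f)(0)=0$ for all $n>d$. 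This shows $\hat f$ has finite support, contained in $\{0,1,\dots,d\}$.

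Next I would promote the identity from $\ZZ_+$ to all of $\ZZ$. Since $\hat f$ has finite support, the function $g(x):=\sum_{n\in\ZZ_+}\hat f(n)x^{[n]}$ is a genuine (finite) polynomial in $x$, defined on all of $\ZZ$, and by the previous paragraph $g(x)=f(x)=p(x)$ for every $x\in\ZZ_+$. Two polynomials that agree on the infinite set $\ZZ_+$ are identically equal, so $g(x)=p(x)=f(x)$ for every $x\in\ZZ$, which is exactly the claimed formula. This last step uses only the uniqueness-of-polynomial remark preceding the proposition together with the elementary fact that a nonzero complex polynomial has finitely many zeros.

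The only place requiring a little care — and the step I would flag as the main (though modest) obstacle — is the assertion that $\delta$ strictly decreases the degree of a polynomial, so that $\delta^{d+1}p\equiv0$. One cannot quite read this off \eqref{hu2} alone, since \eqref{hu2} concerns the basis $x^{[n]}$; one must first note that $\{x^{[0]},\dots,x^{[d]}\}$ is a basis for the polynomials of degree $\le d$ (the leading term of $x^{[n]}$ is $x^n$), expand $p$ in this basis, and then apply \eqref{hu2} termwise. Once this linear-algebra observation is in place the rest is immediate, and the whole proof is short.
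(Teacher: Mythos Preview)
Your proof is correct and follows essentially the same route as the paper's: show $\hat f$ has finite support because $\delta$ lowers degree, then observe that the resulting finite sum is a polynomial agreeing with $f$ on $\ZZ_+$ and hence on all of $\ZZ$. The only cosmetic difference is that the paper simply asserts $\deg(\delta f)=\deg(f)-1$ directly (leading terms of $(x+1)^d$ and $x^d$ cancel), whereas you route this through the basis $\{x^{[0]},\dots,x^{[d]}\}$ and \eqref{hu2}; either justification is fine.
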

\begin{proof}
Note that
$$\deg(\delta f)=\deg(f)-1$$
(if $f=\const,$ $\delta f=0$). Hence
$$\forall n\in\ZZ_+,\qquad n>\deg(f)\Longrightarrow \delta^nf=0.$$
Since $$(\delta f)_{|_{\ZZ_+}}=\delta(f_{|_{\ZZ_+}}),$$
\eqref{fou1} implies that
$$\forall n>\deg(f),\qquad \hat f(n)=0.$$
It follows that
$$\sum_{n\in\ZZ_+}\hat f(n)x^{[n]}$$ is, in fact, a polynomial,
which coincides with the polynomial $f(x)$ on $\ZZ_+$ and hence on
$\ZZ$.
\end{proof}
It follows from Proposition \ref{p01} and identity \eqref{hu2}
that if $f:\ZZ\longrightarrow\CC$ is a polynomial then so is
$\delta f$. A converse statement can be formulated as follows:
\begin{proposition}\label{p03}
Let $f:\ZZ\longrightarrow\CC$ be a polynomial. Then there exists a
polynomial $g:\ZZ\longrightarrow\CC$ such that $f(x)\equiv (\delta
g)(x).$  If  $f(x)\not\equiv0$ then $\deg(g)=\deg(f)+1.$
\end{proposition}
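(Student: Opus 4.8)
The plan is to imitate the classical construction of an antiderivative, working in the falling-factorial basis $x^{[n]}$ and using the differentiation rule \eqref{hu2}. The key observation is that \eqref{hu2} can be read backwards: since $\delta x^{[n+1]}=(n+1)x^{[n]}$, each basis polynomial $x^{[n]}$ is the $\delta$-image of $\frac{1}{n+1}x^{[n+1]}$.

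Concretely, I would first apply Proposition \ref{p01} to write $f(x)=\sum_{n\in\ZZ_+}\hat f(n)x^{[n]}$, where $\hat f$ has finite support; when $f\not\equiv 0$ we may take $N:=\deg(f)$, so that $\hat f(N)\neq 0$ and $\hat f(n)=0$ for all $n>N$. Then I would define
\[
g(x):=\sum_{n\in\ZZ_+}\frac{\hat f(n)}{n+1}\,x^{[n+1]},
\]
which is a \emph{finite} linear combination of the polynomials $x^{[n+1]}$ and hence is itself a polynomial in the sense of the running convention. Applying $\delta$ term by term and invoking \eqref{hu2} in the form $\delta x^{[n+1]}=(n+1)x^{[n]}$ gives $(\delta g)(x)=\sum_{n\in\ZZ_+}\hat f(n)x^{[n]}=f(x)$ for every $x\in\ZZ$, which is the required identity $f\equiv\delta g$.

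For the degree statement, I would note that $\deg x^{[n+1]}=n+1$ by \eqref{poly}, so the summands of $g$ have pairwise distinct degrees and no cancellation of leading terms can occur; the highest-degree term of $g$ is the one with $n=N$, namely $\frac{\hat f(N)}{N+1}x^{[N+1]}$, whose coefficient is nonzero. Hence $\deg(g)=N+1=\deg(f)+1$. I do not expect any genuine obstacle here: this is simply the discrete analogue of integrating a polynomial, and the only points meriting a word are that $g$ is a bona fide polynomial (clear, being a finite sum) and that the leading term survives (clear, since the degrees $n+1$ are strictly increasing in $n$). If desired one could also remark that $g$ is unique up to an additive constant, since $\delta$ kills constants.
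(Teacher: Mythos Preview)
Your argument is correct and coincides with the paper's own proof: both apply Proposition~\ref{p01} to expand $f$ in the basis $x^{[n]}$ and then define $g(x)=\sum_{n\in\ZZ_+}\frac{\hat f(n)}{n+1}x^{[n+1]}$, checking $\delta g=f$ via \eqref{hu2}. Your explicit justification of the degree statement is a welcome addition, since the paper leaves that point implicit.
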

\begin{proof} By Proposition
\ref{p01},
$$ f(x)=\sum_{n\in\ZZ_+}\hat f(n)x^{[n]},$$ where
$\hat f:\ZZ_+\longrightarrow\CC$ is a function with finite
support.
 Consider the
polynomial
$$g(x)=\sum_{n\in\ZZ_+}\dfrac{\hat{f}(n)}{n+1}x^{[n+1]},$$
then, in view of \eqref{hu2},
$$\forall x\in\ZZ,\qquad (\delta g)(x)=\sum_{n\in\ZZ_+}{\hat{f}(n)}x^{[n]}=f(x).$$
\end{proof}

\begin{definition}
\label{defrat1} A function $f:\ZZ_+\longrightarrow \CC$ is said
to be {\em rational} if there exist polynomials
$p,q:\ZZ_+\longrightarrow\CC$ such that
$$\forall x\in \ZZ_+,\qquad q(x)\not=0,$$
and
$$\forall x\in \ZZ_+,\qquad f(x)=\dfrac{p(x)}{q(x)}.$$
\end{definition}

\begin{proposition}\label{ABC}
Let $f:\ZZ_+\longrightarrow \CC$ be given. Then $f(x)$ is a
rational function if and only if there exist a polynomial $p(x)$
and matrices $A,B,C$ such that
\[
\label{kaas} \sigma(A)\cap\ZZ_+=\emptyset,
\]
 and
\begin{equation}
\label{eqreal} f(x)=p(x)+C(xI-A)^{-1}B.
\end{equation}
\end{proposition}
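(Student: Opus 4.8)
The plan is to view the representation \eqref{eqreal} as the discrete-variable analogue of the classical state-space realization of a scalar rational function, and to prove the two implications separately.

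The implication from \eqref{eqreal} to rationality is essentially immediate. Starting from $f(x)=p(x)+C(xI-A)^{-1}B$, I would invoke Cramer's rule to write $(xI-A)^{-1}=\frac{1}{\det(xI-A)}\,\mathrm{adj}(xI-A)$ for $x\notin\sigma(A)$, so that on $\ZZ_+$ one has $f(x)=\bigl(p(x)\det(xI-A)+C\,\mathrm{adj}(xI-A)B\bigr)/\det(xI-A)$, a quotient of two genuine polynomials. The denominator is the characteristic polynomial of $A$, and its zero set equals $\sigma(A)$; hence hypothesis \eqref{kaas} is exactly the assertion that the denominator does not vanish on $\ZZ_+$, which is what Definition \ref{defrat1} demands.

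For the converse, I would take $f=p/q$ with $q$ nowhere zero on $\ZZ_+$, normalize $q$ to be monic of some degree $d$, and peel off a polynomial part by Euclidean division, $p=qs+r$ with $\deg r<d$, so that $f=s+r/q$ on $\ZZ_+$. The strictly proper remainder $r/q$ is then realized by the companion triple: $A$ the $d\times d$ companion matrix of $q$, $B=(0,\dots,0,1)^{t}$, and $C$ the row of coefficients of $r$. A short computation (the standard canonical-form identity) gives $\det(xI-A)=q(x)$ and $C(xI-A)^{-1}B=r(x)/q(x)$. Since $\sigma(A)$ is precisely the zero set of $q$, it misses $\ZZ_+$ by hypothesis, so putting the ``$p$'' of the statement equal to $s$ produces \eqref{eqreal} and \eqref{kaas}. (When $r\equiv0$ one simply has $f=s$, already a polynomial, and no matrices are needed.)

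I do not expect a genuine obstacle here. The only computational point that must be written out with care is the companion-form identity $C(xI-A)^{-1}B=r/q$, together with the accompanying $\det(xI-A)=q$; both are classical. The single conceptual remark worth making is that the discreteness of the domain enters nowhere beyond the observation that ``$q$ does not vanish on $\ZZ_+$'' and ``$\sigma(A)\cap\ZZ_+=\emptyset$'' are two names for the same condition — everything else is verbatim the classical realization argument. If one wanted the matrices to have minimal size, one would first cancel common factors of $p$ and $q$, but this refinement is unnecessary for the stated result.
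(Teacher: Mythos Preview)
Your proposal is correct and follows essentially the same approach as the paper: both directions rest on the classical state-space realization of scalar rational functions, with the spectrum condition \eqref{kaas} translating the requirement that the denominator avoid $\ZZ_+$. The only difference is one of style: the paper simply invokes the general realization theorem (citing \cite{bgk1,MR0255260}) and the fact that, for a minimal realization, the finite poles coincide with $\sigma(A)$, whereas you carry out the construction explicitly via the companion matrix of the monic denominator $q$. Your version is a touch more elementary and self-contained; the paper's is shorter but leans on the cited references. One cosmetic point: in the polynomial case $r\equiv 0$ you still need to exhibit \emph{some} triple $(A,B,C)$ to match the statement literally---any $A$ with $\sigma(A)\cap\ZZ_+=\emptyset$ together with $B=0$ will do.
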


\begin{proof} We first recall that a matrix-valued rational function of a
complex variable can always be written in the form
\[
r(z)=p(z)+C(zI-A)^{-1}B,
\]
where the matrix-valued polynomial $p$ takes care of the pole at
infinity, and $A,B$ and $C$ are matrices of appropriate sizes.
Furthermore, when the dimension of $A$ is minimal, the (finite)
poles of $r$ coincide with the spectrum of $A$; see \cite{bgk1,
MR0255260}. Here, we consider complex-valued functions, and
thus $C$ and $B$ are respectively a row and column vector.\\

Now, by Definition \ref{defrat1}, $f$ is a rational function if
and only if it is the restriction on $\ZZ_+$ of a rational
function of a complex variable, with no poles on $\ZZ_+$, that is
if and only if it can be written as \eqref{eqreal} with the
matrix $A$ satisfying furthermore \eqref{kaas}.
\end{proof}

\begin{theorem}\label{lim1}
Let $f:\ZZ_+\longrightarrow \CC$ be rational, and let $\hat
{f}:\ZZ_+\longrightarrow \CC$ be the Fourier transform of $f$.
Then
$$\limsup_{n\rightarrow\infty}(|\hat f(n)|n!)^{1/n}\leq 1.$$
\end{theorem}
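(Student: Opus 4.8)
The plan is to use the realization formula \eqref{eqreal} from Proposition \ref{ABC} to obtain an explicit expression for the Fourier coefficients $\hat f(n)$ in terms of the matrices $A,B,C$, and then estimate them. First I would reduce to the non-polynomial part: by Proposition \ref{p01} the polynomial $p(x)$ in \eqref{eqreal} contributes only finitely many nonzero Fourier coefficients, so it does not affect the $\limsup$, and it suffices to treat $f(x)=C(xI-A)^{-1}B$ with $\sigma(A)\cap\ZZ_+=\emptyset$. Next I would apply formula \eqref{fou1}, $\hat f(n)=(\delta^n f)(0)/n!$, which reduces the problem to controlling the growth of $(\delta^n f)(0)$.

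The key computational step is to understand how $\delta$ acts on $x\mapsto C(xI-A)^{-1}B$. Since $\delta$ shifts $x\mapsto x+1$, one has $(\delta g)(x)=C\bigl((x+1)I-A\bigr)^{-1}B-C(xI-A)^{-1}B$, and the resolvent identity gives $(\delta g)(x)=-C\bigl((x+1)I-A\bigr)^{-1}(xI-A)^{-1}B$. Iterating, one gets
\[
(\delta^n f)(0)=(-1)^n\, n!\, C\Bigl(\prod_{j=0}^{n}(jI-A)^{-1}\Bigr)B,
\]
so that $\hat f(n)n!=(-1)^n n!\, C\bigl(\prod_{j=0}^{n}(jI-A)^{-1}\bigr)B$. (This matches the observation, implicit in Proposition \ref{tay1} and the identity \eqref{hu2}, that $\delta$ lowers the ``degree''; here the poles of the resolvent play the role that the zeros of $x^{[n]}$ play in the polynomial case.) Equivalently, and perhaps more cleanly, I would write $C(xI-A)^{-1}B=\sum_k \hat f(k) x^{[k]}$ directly by expanding $(xI-A)^{-1}$ via the generating-function-type identity: since $x^{[n]}$ is the natural ``discrete power,'' the coefficients $\hat f(n)$ are given by $C$ applied to a Newton-type interpolation series for $(xI-A)^{-1}$, whose coefficients are the divided differences $\prod_{j=0}^{n-1}(jI-A)^{-1}$ type expressions.

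Finally I would estimate. Write $A=S+N$ in Jordan form (or just bound the resolvent): for $j$ large, $(jI-A)^{-1}=\frac1j(I-A/j)^{-1}$ has norm $\sim 1/j$, more precisely $\|(jI-A)^{-1}\|\le C'/j$ for $j\ge j_0$ (using $\sigma(A)\cap\ZZ_+=\emptyset$ to handle small $j$ separately, where each factor is a fixed finite constant). Hence
\[
|\hat f(n)|\,n!=n!\,\Bigl|C\Bigl(\prod_{j=0}^{n}(jI-A)^{-1}\Bigr)B\Bigr|
\le n!\cdot M\cdot\prod_{j=j_0}^{n}\frac{C'}{j}
= n!\cdot M\cdot\frac{(C')^{n-j_0+1}(j_0-1)!}{n!},
\]
where $M$ absorbs $\|C\|\,\|B\|$ and the finitely many factors with $j<j_0$. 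The $n!$'s cancel, leaving a quantity that is $O\bigl((C')^{n}\bigr)$, so $(|\hat f(n)|n!)^{1/n}\to$ something $\le C'$; but in fact the $(I-A/j)^{-1}$ factors tend to $I$, so the effective constant in the tail product is $1+o(1)$, and a more careful bookkeeping — splitting the product at an index $j_1(n)\to\infty$ with $j_1(n)=o(n)$ — gives $(|\hat f(n)|n!)^{1/n}\le 1+\varepsilon$ for every $\varepsilon>0$, hence $\limsup_{n\to\infty}(|\hat f(n)|n!)^{1/n}\le 1$. The main obstacle is exactly this last refinement: getting the sharp constant $1$ rather than merely some finite constant requires exploiting that $\|(I-A/j)^{-1}\|=1+O(1/j)$ and that $\sum_{j=1}^n O(1/j)=O(\log n)=o(n)$, so the product of these correction factors contributes subexponentially and disappears after taking $n$-th roots.
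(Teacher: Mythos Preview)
Your proposal is correct and matches the paper's approach: both use the realization from Proposition~\ref{ABC}, discard the polynomial part, bound $(\delta^n f)(0)$ via the product of resolvents $\prod_{j=0}^n(jI-A)^{-1}$, and finish using $\|(I-\tfrac{1}{j}A)^{-1}\|\to 1$. The paper is terser --- it jumps directly to $|\hat f(n)n!|\le \|C\|\,\|B\|\,\|A^{-1}\|\prod_{j=1}^n\|(I-\tfrac{1}{j}A)^{-1}\|$ and then notes this is $\le M(1+\epsilon)^n$ for $n$ large --- but your explicit iterated-difference formula and the $O(\log n)$ subexponential bookkeeping are correct elaborations of the same idea.
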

\begin{proof}
According to Proposition \ref{ABC}, there exist a polynomial
$p(x)$ and matrices $A,B,C$ such that
$\sigma(A)\cap\ZZ_+=\emptyset$ and
$$f(x)=p(x)+C(xI-A)^{-1}B.$$ Hence, for $n>\deg(p)$,
$$|\hat{f}(n)n!|=|(\delta^nf)(0)|\leq\|C\|\cdot\|B\|\cdot\|A^{-1}\|\cdot
\prod_{j=1}^n\|\left(I-\dfrac{1}{n}A\right)^{-1}\|.$$ Since
$$\lim_{n\rightarrow\infty}\|\left(I-\dfrac{1}{n}A\right)^{-1}\|=1,$$
$$\forall\epsilon>0,\,\,\exists M,N\in\ZZ_+,\,\forall
n\in\ZZ_+\quad:\quad n\geq N\Longrightarrow|\hat{f}(n)n!|\leq
M(1+\epsilon)^n,$$ and the conclusion follows.
\end{proof}

\section{Discrete polynomials of two variables}
\setcounter{equation}{0}
\label{sec3}
\begin{definition}
The linear {\em difference operators} $\delta_x,\delta_y$ on the
space of functions $f:\Omega_1\times\Omega_2\longrightarrow\CC$
are defined by
$$(\delta_xf)(x,y):=f(x+1,y)-f(x,y),\quad (\delta_yf)(x,y):=f(x,y+1)-f(x,y).$$
\end{definition}
Note that the difference operators $\delta_x$ and $\delta_y$
commute:
\[\label{comm}(\delta_x\delta_yf)(x,y)=
(\delta_y\delta_xf)(x,y)=f(x+1,y+1)-f(x,y+1)-f(x+1,y)+f(x,y).
\]

\begin{proposition}\label{tay11}
Let $f:\ZZ_+^2\longrightarrow \CC$. Then for every
$(x,y)\in\ZZ_+^2$, the series
\[
\label{transform} \check
f(x,y):=\sum_{(m,n)\in\ZZ_+^2}f(m,n)x^{[m]}y^{[n]}
\]
contains finitely many non-zero terms.
\end{proposition}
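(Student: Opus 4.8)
This is essentially a two-variable version of Proposition \ref{tay1}. The plan is to reduce to the one-variable case already established.

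First I would recall the key observation from the proof of Proposition \ref{tay1}: by the definition \eqref{poly} of $x^{[m]}$, we have $x^{[m]}=0$ whenever $m,x\in\ZZ_+$ and $x<m$. Applying this in each variable separately, for a fixed $(x,y)\in\ZZ_+^2$ the term $f(m,n)x^{[m]}y^{[n]}$ vanishes unless $m\le x$ and $n\le y$. Hence the only possibly non-zero terms in \eqref{transform} are those with $(m,n)\in\{0,1,\dots,x\}\times\{0,1,\dots,y\}$, a finite set with $(x+1)(y+1)$ elements.

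Therefore the series $\check f(x,y)$ contains at most $(x+1)(y+1)$ non-zero terms, which proves the claim; in particular the sum over $(m,n)\in\ZZ_+^2$ is well defined as a finite sum for every $(x,y)\in\ZZ_+^2$.

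I do not anticipate any real obstacle here: the statement is a direct consequence of the support property of the falling factorials $x^{[m]}$, exactly as in the one-variable case. The only mild point to be careful about is that the vanishing must be invoked in both variables, but since $x^{[m]}y^{[n]}$ factors as a product, each factor is killed independently once the corresponding index exceeds the corresponding coordinate.
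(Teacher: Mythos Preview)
Your proof is correct and follows exactly the same approach as the paper: both invoke the vanishing $x^{[m]}=0$ for $x<m$ in each variable to conclude that at most $(x+1)(y+1)$ terms survive.
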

\begin{proof}
In view of \eqref{poly},
$$\forall x\in\ZZ_+,\,\forall
n\in\ZZ_+,\,\quad x<n\Longrightarrow x^{[n]}=0.$$  Therefore, for
every $(x,y)\in\ZZ_+^2$ the series $\check f(x)$ contains at most
$(x+1)(y+1)$ non-zero terms.
\end{proof}

Formula \eqref{transform} can be viewed as a transform of a
discrete function. The inverse transform is calculated in the
next proposition.

\begin{proposition}\label{tay12}
Let $f:\ZZ_+^2\longrightarrow \CC$. Then there exists a unique
function $\hat f:\ZZ_+^2\longrightarrow \CC$  such that
\[\label{tay02}\forall (x,y)\in\ZZ_+^2,
\qquad f(x,y)=\sum_{(m,n)\in\ZZ_+^2} \hat f(m,n)x^{[m]}y^{[n]}.
\]
The function $\hat f(m,n)$ is given by
\[
\label{fou11} \hat
f(m,n)=\dfrac{(\delta_x^m\delta_y^nf)(0,0)}{m!n!}, \quad
(m,n)\in\ZZ_+^2.\]
\end{proposition}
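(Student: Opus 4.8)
The plan is to bootstrap the one-variable statement, Proposition \ref{tay2}, by applying it twice — once in each of the two integer variables — and to use the commutativity relation \eqref{comm} to recast the resulting coefficient into the symmetric form \eqref{fou11}.

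For the existence part I would proceed as follows. First, fix $y\in\ZZ_+$ and apply Proposition \ref{tay2} to the one-variable function $x\mapsto f(x,y)$. This produces, for each $y$, a function $m\mapsto c_m(y):=(\delta_x^mf)(0,y)/m!$ such that $f(x,y)=\sum_{m\in\ZZ_+}c_m(y)x^{[m]}$ for every $x\in\ZZ_+$. Next, for each fixed $m$, apply Proposition \ref{tay2} again, now to the one-variable function $y\mapsto c_m(y)$; this gives a function $n\mapsto\hat f(m,n):=(\delta_y^nc_m)(0)/n!$ with $c_m(y)=\sum_{n\in\ZZ_+}\hat f(m,n)y^{[n]}$. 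Since $\delta_y$ acts only on the second variable, $(\delta_y^nc_m)(0)=(\delta_y^n\delta_x^mf)(0,0)/m!$, so $\hat f(m,n)=(\delta_y^n\delta_x^mf)(0,0)/(m!\,n!)$, and by \eqref{comm} this is $(\delta_x^m\delta_y^nf)(0,0)/(m!\,n!)$, i.e. \eqref{fou11}. Substituting the expansion of $c_m(y)$ into that of $f(x,y)$ then yields \eqref{tay02}; the interchange of the two summations is harmless because, by Proposition \ref{tay11}, only finitely many terms are non-zero for any fixed $(x,y)$.

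For the uniqueness part, suppose $g:\ZZ_+^2\to\CC$ also satisfies \eqref{tay02}. Fixing $y$ and grouping terms (a finite sum), $f(x,y)=\sum_{m\in\ZZ_+}\bigl(\sum_{n\in\ZZ_+}g(m,n)y^{[n]}\bigr)x^{[m]}$, so the uniqueness clause of Proposition \ref{tay2} applied in the variable $x$ forces $\sum_{n\in\ZZ_+}g(m,n)y^{[n]}=c_m(y)$ for all $y$ and all $m$; applying the same uniqueness clause in the variable $y$ then gives $g(m,n)=\hat f(m,n)$.

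I do not anticipate a serious obstacle here: the only points needing attention are the bookkeeping around \eqref{comm} when identifying the coefficient with the symmetric expression \eqref{fou11}, and the (trivial) observation that every series in sight terminates for a fixed argument, so that no convergence or rearrangement issue arises. An alternative, self-contained route — mirroring the proof of Proposition \ref{tay2} almost verbatim — would be to define $\hat f$ directly by \eqref{fou11}, set $\tilde f(x,y):=\sum_{(m,n)\in\ZZ_+^2}\hat f(m,n)x^{[m]}y^{[n]}$ (finite by Proposition \ref{tay11}), and then show that $\delta_x^m\delta_y^nf$ and $\delta_x^m\delta_y^n\tilde f$ agree on all of $\ZZ_+^2$, by induction on $x$ along the line $y=0$ and then, for each $x$, by induction on $y$, finally setting $m=n=0$.
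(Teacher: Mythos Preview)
Your proposal is correct and follows essentially the same approach as the paper: apply Proposition~\ref{tay2} twice, once in each variable, and read off the coefficient. The only cosmetic difference is that the paper fixes $x$ first and expands in $y$ (so the coefficient comes out directly as $(\delta_x^m\delta_y^n f)(0,0)/(m!n!)$ without needing to invoke \eqref{comm}), whereas you fix $y$ first; the paper also folds uniqueness into the two successive applications of Proposition~\ref{tay2} rather than treating it separately, but this is the same argument.
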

\begin{proof} First, fix $x\in\ZZ_+$ and consider the function
$f_x:\ZZ_+\longrightarrow\CC$ given by $$f_x(y)=f(x,y),\quad
y\in\ZZ_+.$$ Then, according to Proposition \ref{tay2}, there is a
unique function $\hat{ f_x}:\ZZ_+\longrightarrow\CC$ such that
$$\forall y\in\ZZ_+,\qquad f_x(y)=\sum_{n\in\ZZ_+}\hat{f_x}(n) y^{[n]};$$
the function $\hat{f_x}(n)$ is given by
$$\hat{f_x}(n)=\dfrac{(\delta_y^n f)(x,0)}{n!},\quad n\in\ZZ_+.$$
Next, fix $n\in\ZZ_+$ and  consider the function
$g_n:\ZZ_+\longrightarrow \CC$ given by
$$g_n(x)=\hat{f_x}(n),\quad x\in\ZZ_+.$$ By the same Proposition
\ref{tay2}, there is a unique function
$\hat{g_n}:\ZZ_+\longrightarrow \CC$ such that
$$\forall x\in\ZZ_+,\qquad g_n(x)=\sum_{m\in\ZZ_+}\hat{g_n}(m) x^{[m]};$$
the function $\hat{g_n}(m)$ is given by
$$\hat{g_n}(m)=\dfrac{(\delta_x^m\delta_y^n f)(0,0)}{m!n!},\quad m\in\ZZ_+.$$
Thus
$$\forall (x,y)\in\ZZ_+^2,\qquad f(x,y)=\sum_{(m,n)
\in\ZZ_+^2}\hat{g_n}(m) x^{[m]}y^{[n]}.$$ It remains to set $\hat
f(m,n)=\hat{g_n}(m).$
\end{proof}

\begin{definition}
Let $f:\ZZ_+^2\longrightarrow \CC$. Then the function $\hat
f:\ZZ_+^2\longrightarrow\CC,$ defined by \eqref{fou11}, is said to
be the Fourier transform of $f$.
\end{definition}

\begin{theorem}
\label{tautology} Let $p(z,w)$ be a complex polynomial in two
variables. Then, $p\big|_{\ZZ^2}=0$ if and only if $p\equiv 0$.
\end{theorem}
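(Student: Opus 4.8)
The forward implication is immediate: every point of $\ZZ^2$ lies in the domain of $p$, so $p\equiv 0$ forces $p\big|_{\ZZ^2}=0$. For the converse the plan is to reduce to the one–variable principle that a nonzero element of $\CC[z]$ has only finitely many roots, and then to peel off the two variables one at a time.

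Concretely, I would write $p(z,w)=\sum_{j=0}^{N}a_j(z)\,w^j$ with $a_j\in\CC[z]$. Fix an arbitrary $x\in\ZZ$. The hypothesis gives $\sum_{j=0}^{N}a_j(x)\,n^j=p(x,n)=0$ for every $n\in\ZZ$, so the single–variable polynomial $w\mapsto\sum_j a_j(x)w^j$ has infinitely many roots and is therefore the zero polynomial; hence $a_j(x)=0$ for every $j$ and every $x\in\ZZ$. Now each $a_j$ is itself a one–variable polynomial vanishing on all of $\ZZ$, so by the same finite–roots principle $a_j\equiv 0$ for every $j$, and therefore $p\equiv 0$.

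There is essentially no obstacle here: the statement is just the two–variable instance of the fact that $\CC$ is an infinite integral domain, and the only care needed is to invoke the single–variable lemma in the right order (first freeze $z=x$ to kill all the $w$–coefficients $a_j(x)$, then kill the $z$–dependence of each $a_j$). If one prefers to stay inside the machinery already developed, the same conclusion follows from Proposition \ref{tay12}: restricting $p$ to $\ZZ_+^2$ and arguing exactly as in Proposition \ref{p01} (using that $\delta_x$ and $\delta_y$ strictly lower the degree) shows that the Fourier transform $\hat p$ has finite support and that $p(x,y)=\sum_{(m,n)\in\ZZ_+^2}\hat p(m,n)\,x^{[m]}y^{[n]}$ holds as an identity of polynomials; the hypothesis makes every coefficient $\hat p(m,n)=(\delta_x^m\delta_y^n p)(0,0)/(m!n!)$ vanish, so $p\equiv 0$. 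Either route rests on the elementary finite–roots principle, which I would simply state and use.
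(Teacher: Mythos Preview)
Your primary argument is correct and follows the same line as the paper's proof: both write $p(z,w)=\sum_{n=0}^{N}p_n(z)\,w^n$, deduce that each coefficient $p_n$ vanishes on $\ZZ$, and then conclude $p_n\equiv 0$ by the one-variable finite-roots principle. The only difference is that the paper isolates the $p_n(z)$ via a Vandermonde determinant (evaluating at $w=0,1,\dots,N$), whereas you invoke the infinite-roots principle directly---a purely cosmetic variation.

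One caution about your secondary route: the claim that $p(x,y)=\sum_{m,n}\hat p(m,n)\,x^{[m]}y^{[n]}$ ``holds as an identity of polynomials'' does not follow from Proposition~\ref{tay12}, which only gives equality on $\ZZ_+^2$; passing from agreement on $\ZZ_+^2$ to an identity in $\CC[x,y]$ is exactly the content of Theorem~\ref{tautology} (indeed, this is how Proposition~\ref{p11} later uses it), so that step is circular as written. It can be repaired by observing directly that $\{x^{[m]}y^{[n]}\}$ is a basis of $\CC[x,y]$ via a triangular change from the monomial basis, but then you are no longer ``staying inside the machinery already developed''; in any case your main route already suffices.
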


\begin{proof}
Write
\[
p(z,w)=\sum_{n=0}^N p_n(z)w^n,
\]
where the $p_n$ are polynomials in $z$. The equations
$p(z,w)\equiv 0$ for $w=0,1,\ldots N$ lead to (using a
Vandermonde determinant) that $p_0(z),p_1(z),\ldots, p_N(z)$
vanish on $\ZZ$ and hence identically.
\end{proof}

Suppose  that a function $f:\ZZ^2\longrightarrow\CC$ is such that
$$\forall (x,y)\in\ZZ^2,\qquad f(x,y)=p(x,y),$$
where $p(x,y)$ is a polynomial with complex coefficients. Then in
view of Theorem \ref{tautology}, such a polynomial $p(x,y)$ is
unique, and we shall call the function $f$ itself a polynomial. If
$f(x,y)\not\equiv 0,$ the degree of $f(x,y)$ is the same as the
degree of $p(x,y)$.

\begin{proposition}\label{p11}
Let $f:\ZZ^2\longrightarrow\CC$ be a polynomial, and let $\hat f
:\ZZ_+^2\longrightarrow\CC$ be the Fourier transform of the
restriction $f_{|_{\ZZ_+^2}}$. Then the function $\hat f$ has a
finite support and
$$\forall (x,y)\in\ZZ^2,\quad f(x,y)=\sum_{(m,n)\in\ZZ_+^2}\hat f(m,n)x^{[m]}y^{[n]}.$$
\end{proposition}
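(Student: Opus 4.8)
The plan is to reduce this two‑variable statement to the one‑variable Proposition \ref{p01}, exactly mirroring the way Proposition \ref{tay12} was reduced to Proposition \ref{tay2}. The key observation is that a polynomial on $\ZZ^2$ restricts, for each fixed value of one variable, to a polynomial in the other variable, and — crucially — with degree bounded independently of that fixed value (by $\deg f$).

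First I would fix $x\in\ZZ_+$ and apply Proposition \ref{p01} to the one‑variable polynomial $y\mapsto f(x,y)$ on $\ZZ$. Since $\deg_y f(x,\cdot)\le\deg f$ for every $x$, Proposition \ref{p01} gives $f(x,y)=\sum_{n\in\ZZ_+}\hat f_x(n)\,y^{[n]}$ for all $y\in\ZZ$, with $\hat f_x(n)=(\delta_y^n f)(x,0)/n!=0$ whenever $n>\deg f$. Next, for each fixed $n$ with $n\le\deg f$, the function $x\mapsto (\delta_y^n f)(x,0)$ is a polynomial in $x$ on $\ZZ$ (difference operators preserve polynomiality, and restriction/evaluation of a polynomial in two variables is a polynomial in the remaining variable), so another application of Proposition \ref{p01} yields $\hat f_x(n)=\sum_{m\in\ZZ_+}\hat f(m,n)\,x^{[m]}$ for all $x\in\ZZ$, with $\hat f(m,n)=(\delta_x^m\delta_y^n f)(0,0)/(m!\,n!)$ vanishing for $m$ larger than $\deg f$. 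This exhibits $\hat f$ as supported on the finite set $\{(m,n):m,n\le\deg f\}$, and shows that the double sum $\sum_{(m,n)\in\ZZ_+^2}\hat f(m,n)\,x^{[m]}y^{[n]}$ is a finite sum, hence a genuine polynomial in $(x,y)$, agreeing with $f$ on all of $\ZZ^2$.

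The only remaining point is to identify this $\hat f$ with the Fourier transform of $f_{|_{\ZZ_+^2}}$ as defined in Proposition \ref{tay12}: but both are given by the same explicit formula $(\delta_x^m\delta_y^n f)(0,0)/(m!\,n!)$, so they coincide, and by uniqueness in Proposition \ref{tay12} the representation $f(x,y)=\sum\hat f(m,n)x^{[m]}y^{[n]}$ extends the one valid on $\ZZ_+^2$; we have just checked it in fact holds on all of $\ZZ^2$.

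I do not expect a real obstacle here — the statement is a two‑variable bookkeeping version of Proposition \ref{p01}. The one point requiring a line of care is the uniformity of the degree bound in the iterated argument: one must note that $\deg_y f(x,\cdot)$ and $\deg_x (\delta_y^n f)(\cdot,0)$ are bounded by $\deg f$ uniformly in the frozen variable, so that finitely many applications of Proposition \ref{p01} suffice and the support of $\hat f$ is genuinely finite rather than merely "finite in each row." Everything else is a direct transcription of the proof of Proposition \ref{tay12}, replacing Proposition \ref{tay2} by Proposition \ref{p01}.
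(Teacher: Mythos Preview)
Your argument is correct, but the paper takes a somewhat different and shorter route. The paper observes directly that $\delta_x^m\delta_y^n f=0$ whenever $m+n>\deg f$, so $\hat f$ has finite support by \eqref{fou11}; then $\sum\hat f(m,n)x^{[m]}y^{[n]}$ is a genuine polynomial, which by Proposition~\ref{tay12} agrees with $f$ on $\ZZ_+^2$, and hence (two polynomials agreeing on $\ZZ_+^2$, cf.\ Theorem~\ref{tautology}) on all of $\ZZ^2$. Your approach instead iterates the one-variable Proposition~\ref{p01} twice, mirroring the proof of Proposition~\ref{tay12}; this is slightly longer but has the mild advantage of not invoking the two-variable uniqueness Theorem~\ref{tautology} separately, since the extension from $\ZZ_+$ to $\ZZ$ in each variable is already built into Proposition~\ref{p01}.

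One small slip to fix: in your first step you write ``fix $x\in\ZZ_+$,'' which as written only yields the identity on $\ZZ_+\times\ZZ$, not on $\ZZ^2$ as you then claim. Since $y\mapsto f(x,y)$ is a polynomial on $\ZZ$ for every $x\in\ZZ$, Proposition~\ref{p01} applies just as well with $x\in\ZZ$ fixed; making that change, your argument goes through verbatim and gives the identity on all of $\ZZ^2$.
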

\begin{proof}
First, in view of \eqref{fou11} and of the fact that
$$\forall (m,n)\in\ZZ_+^2,\,\,\forall (x,y)\in\ZZ^2,
\quad m+n>\deg(f)\Longrightarrow (\delta_x^m\delta_y^n
f)(x,y)=0,$$ the function $\hat f$ has a finite support.  It
follows that
$$\sum_{(m,n)\in\ZZ_+^2}\hat f(m,n)x^{[m]}y^{[n]}$$ is, in fact, a polynomial,
which coincides with the polynomial $f(x,y)$ on $\ZZ_+^2$ and
hence on $\ZZ^2$.
\end{proof}
It follows from Proposition \ref{p11} and identity \eqref{hu2}
that if $f:\ZZ^2\longrightarrow\CC$ is a polynomial then so are
$\delta_x f$ and $\delta_yf$. A converse statement can be
formulated as follows. It will be used in the proof of Theorem
\ref{t1},
\begin{proposition}\label{p13}
Let $f,g:\ZZ^2\longrightarrow\CC$ be two polynomials, such that
\[\label{co01}(\delta_y f)(x,y)\equiv(\delta_x g)(x,y).\] Then there exists a
polynomial $h:\ZZ^2\longrightarrow\CC$ such that
\[\label{co02}(\delta_xh)(x,y)\equiv f(x,y),\quad (\delta_yh)(x,y)\equiv
g(x,y).\]
\end{proposition}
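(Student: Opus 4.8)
The plan is to pass to the ``Fourier'' picture of Proposition \ref{p11}, in which $\delta_x$ and $\delta_y$ act on the coefficient arrays attached to the monomials $x^{[m]}y^{[n]}$ by explicit shift-and-rescale operations, so that both the hypothesis \eqref{co01} and the sought conclusion \eqref{co02} become elementary recurrences for one finitely supported array $\hat h$.

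First I would invoke Proposition \ref{p11} to write $f(x,y)=\sum_{(m,n)\in\ZZ_+^2}\hat f(m,n)x^{[m]}y^{[n]}$ and $g(x,y)=\sum_{(m,n)\in\ZZ_+^2}\hat g(m,n)x^{[m]}y^{[n]}$ with $\hat f,\hat g$ of finite support. By \eqref{hu2}, $\delta_x(x^{[m]}y^{[n]})=m\,x^{[m-1]}y^{[n]}$ and symmetrically for $\delta_y$, so after reindexing $\delta_x f=\sum_{(m,n)}(m+1)\hat f(m+1,n)x^{[m]}y^{[n]}$ and $\delta_y f=\sum_{(m,n)}(n+1)\hat f(m,n+1)x^{[m]}y^{[n]}$, and likewise for $g$. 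Comparing coefficients (Proposition \ref{p11}), the hypothesis \eqref{co01} is equivalent to
\[
\label{cpt}(n+1)\hat f(m,n+1)=(m+1)\hat g(m+1,n),\qquad (m,n)\in\ZZ_+^2,
\]
while exhibiting $h$ as in \eqref{co02} is the same as exhibiting a finitely supported $\hat h:\ZZ_+^2\to\CC$ with $(m+1)\hat h(m+1,n)=\hat f(m,n)$ and $(n+1)\hat h(m,n+1)=\hat g(m,n)$ for all $(m,n)$: indeed $h(x,y):=\sum_{(m,n)}\hat h(m,n)x^{[m]}y^{[n]}$ is then a polynomial on $\ZZ^2$, and the same coefficient computation run backwards yields $\delta_x h=f$ and $\delta_y h=g$.

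To construct $\hat h$ I would set $\hat h(m,n):=\hat f(m-1,n)/m$ for $m\ge1$, $\hat h(0,n):=\hat g(0,n-1)/n$ for $n\ge1$, and $\hat h(0,0):=0$; its finite support is inherited from $\hat f,\hat g$. The two prescriptions overlap precisely on $\{m\ge1,\ n\ge1\}$, and the crux of the whole argument is that they agree there — that is, $n\hat f(m-1,n)=m\hat g(m,n-1)$, which is exactly \eqref{cpt} with both indices lowered by one. Granting this, the two required recurrences for $\hat h$ hold for every $(m,n)$: the first by the definition of $\hat h$ on $\{m\ge1\}$, the second by the definition together with one further application of \eqref{cpt} (one index lowered). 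I expect the compatibility of the two branches of $\hat h$ on the overlap to be the only step requiring thought; it is the single place where \eqref{co01} is genuinely used, everything else being bookkeeping with \eqref{hu2}.

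One may note in passing that this $h$ agrees on $\ZZ_+^2$ with the discrete ``path sum'' $h(x,y)=\sum_{j=0}^{x-1}f(j,0)+\sum_{k=0}^{y-1}g(x,k)$, for which \eqref{co02} is transparent by telescoping (the hypothesis \eqref{co01} entering in the mixed term); the coefficient computation above is what is needed to recognize this expression as the restriction of a genuine polynomial on all of $\ZZ^2$.
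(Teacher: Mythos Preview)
Your argument is correct and is essentially the paper's proof rephrased at the level of the coefficient array: the paper writes $h(x,y)=\sum_{(m,n)}\frac{\hat f(m,n)}{m+1}x^{[m+1]}y^{[n]}+\sum_{n}\frac{\hat g(0,n)}{n+1}y^{[n+1]}$, which has exactly your $\hat h$, and then uses \eqref{cpt} (the paper's \eqref{use03}) to rewrite $h$ in a $g$-based form before applying $\delta_y$. One small expository slip: your two \emph{prescriptions} for $\hat h$ (on $\{m\ge1\}$ and on $\{m=0,\,n\ge1\}$) do not actually overlap; what overlaps on $\{m\ge1,\,n\ge1\}$ are the two \emph{constraints} $\hat h(m,n)=\hat f(m-1,n)/m$ and $\hat h(m,n)=\hat g(m,n-1)/n$ coming from the two recurrences, and \eqref{cpt} is what makes them consistent---which is exactly what you go on to verify.
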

\begin{proof}
By  Proposition \ref{p11}, there exist functions $\hat f,\hat
g:\ZZ_+^2\longrightarrow\CC$  with finite support, such that
$$f(x,y)=\sum_{(m,n)\in\ZZ_+^2}\hat{f}(m,n)x^{[m]}y^{[n]},\quad
g(x,y)=\sum_{(m,n)\in\ZZ_+^2}\hat{g}(m,n)x^{[m]}y^{[n]}.$$

Then identity \eqref{co01} implies that
\[\label{use03}\forall
(m,n)\in\ZZ^2_+,\qquad (n+1)\hat{f}(m,n+1)=(m+1)\hat{g}(m+1,n).\]
 Consider
the polynomial
$$h(x,y)=\sum_{(m,n)\in\ZZ_+^2}\dfrac{\hat{f}(m,n)}{m+1}
x^{[m+1]}y^{[n]}+\sum_{n\in\ZZ_+}\dfrac{\hat{g}(0,n)}{n+1}y^{[n+1]},$$
then
$$\forall (x,y)\in\ZZ^2,\qquad(\delta_x h)(x,y)=\sum_{(m,n)\in\ZZ_+^2}\hat{f}(m,n)
x^{[m]}y^{[n]}=f(x,y).$$ On the other hand, in view of
\eqref{use03},
$$h(x,y)=\sum_{(m,n)\in\ZZ_+^2}\dfrac{\hat{g}(m,n)}{n+1}
x^{[m]}y^{[n+1]}+\sum_{n\in\ZZ_+}\dfrac{\hat{f}(m,0)}{m+1}x^{[m+1]},$$
hence
$$\forall (x,y)\in\ZZ^2,\qquad(\delta_y h)(x,y)=\sum_{(m,n)\in\ZZ_+^2}\hat{g}(m,n)
x^{[m]}y^{[n]}=g(x,y).$$  \end{proof}

\section{Discrete analytic polynomials}
\setcounter{equation}{0}
\label{sec4}
{\bf Difference operators:} It is convenient to recast Definition
\ref{maindef} in terms of the difference operators. In what
follows, each of the sets $\Omega_1,\Omega_2$ is either $\ZZ$ or
$\ZZ_+$.

\begin{theorem}
\label{maindef2}
A function $f:\Omega_1\times\Omega_2\longrightarrow\CC$ is
discrete analytic if and only if
$$\forall (x,y)\in \Omega_1\times\Omega_2,\qquad(\bar\cD f)(x,y)=0,$$
where
\[\label{COP}\bar\cD:=(1-i)\delta_x+(1+i)\delta_y+\delta_x\delta_y.\]
\end{theorem}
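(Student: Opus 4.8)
The plan is to rewrite Definition~\ref{maindef} algebraically and then recognize the resulting expression as a combination of the operators $\delta_x$, $\delta_y$, and $\delta_x\delta_y$. First I would clear denominators in \eqref{DCR}: multiplying both sides by $(1+i)(1-i)=2$, the defining relation becomes
\[
(1-i)\bigl(f(x+1,y+1)-f(x,y)\bigr)=(1+i)\bigl(f(x+1,y)-f(x,y+1)\bigr),
\]
valid for all $(x,y)$. Next I would move everything to one side and expand, writing $f(x+1,y+1)$, $f(x+1,y)$, $f(x,y+1)$ in terms of increments: specifically use $f(x+1,y+1)-f(x,y)=(\delta_x f)(x,y)+(\delta_y f)(x,y)+(\delta_x\delta_y f)(x,y)$, which follows from \eqref{comm}, and $f(x+1,y)-f(x,y+1)=(\delta_x f)(x,y)-(\delta_y f)(x,y)$.

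Substituting these two identities into the cleared relation gives
\[
(1-i)\bigl((\delta_x f)+(\delta_y f)+(\delta_x\delta_y f)\bigr)=(1+i)\bigl((\delta_x f)-(\delta_y f)\bigr)
\]
at every $(x,y)$. Collecting the $\delta_x f$ and $\delta_y f$ terms, the coefficient of $\delta_x f$ on the left minus right is $(1-i)-(1+i)=-2i$, and the coefficient of $\delta_y f$ is $(1-i)+(1+i)=2$, while $\delta_x\delta_y f$ keeps coefficient $(1-i)$. Thus the relation is equivalent to
\[
-2i\,(\delta_x f)(x,y)+2\,(\delta_y f)(x,y)+(1-i)(\delta_x\delta_y f)(x,y)=0.
\]
Finally I would multiply through by the unit scalar $\tfrac{1+i}{2}$ (note $\tfrac{1+i}{2}\cdot(-2i)=1-i$, $\tfrac{1+i}{2}\cdot 2=1+i$, and $\tfrac{1+i}{2}\cdot(1-i)=1$) to arrive at $(1-i)\delta_x f+(1+i)\delta_y f+\delta_x\delta_y f=0$, i.e. $\bar\cD f=0$ with $\bar\cD$ as in \eqref{COP}. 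Since every step is an equivalence (multiplication by nonzero scalars, and the two exact increment identities), this proves both directions simultaneously.

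There is no real obstacle here; the only thing to be careful about is bookkeeping with the complex constants $1\pm i$ and making sure the increment identity for $f(x+1,y+1)-f(x,y)$ is applied with the correct signs, since that is where \eqref{comm} enters. I would present the computation compactly, perhaps remarking that one may equally well verify the identity pointwise at a single generic $(x,y)$ since all operators involved are evaluated there.
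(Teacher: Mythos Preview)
Your proof is correct and takes essentially the same approach as the paper: a direct algebraic verification that the defining relation \eqref{DCR} is equivalent, pointwise, to $(\bar\cD f)(x,y)=0$. The paper organizes the computation slightly differently (it forms the difference of the two sides of \eqref{DCR} and factors out $\tfrac{1-i}{2}$ rather than clearing denominators first), but the content is the same.
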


\begin{proof}
In view of \eqref{comm},
\begin{multline*}\forall(x,y)\in\Omega_1\times\Omega_2,\quad
\dfrac{f(x+1,y+1)-f(x,y)}{1+i}-\dfrac{f(x+1,y)-f(x,y+1)}{1-i}\\=
\dfrac{1-i}{2}(f(x+1,y+1)-f(x,y)-if(x+1,y)+if(x,y+1))\\=
\dfrac{1-i}{2}((\delta_x\delta_yf)(x,y)-2f(x,y)+
(1-i)f(x+1,y)+(1+i)f(x,y+1))\\=\dfrac{1-i}{2}((\delta_x\delta_yf)(x,y)+
(1-i)(\delta_xf)(x,y)+(1+i)(\delta_yf)(x,y)).\end{multline*}
\end{proof}

{\bf Extension:} In  view of Definition \ref{maindef}, given
$f_0\,\,:\,\,\ZZ\,\,\longrightarrow\,\,\mathbb C$ there are
infinitely discrete analytic functions $f$ on $\ZZ^2$ such that
$f(x,0)=f_0(x)$. However, the following theorem show that in the
case when $f_0$ is a polynomial, only one of these discrete
analytic extensions will be a polynomial in $x,y$. The result
itself originates with the work of Duffin \cite{MR0078441}, and
we give a new proof.

\begin{theorem}
\label{t1}
Let $p:\ZZ\longrightarrow\CC$ be a polynomial. Then there exists a
unique discrete analytic polynomial $q:\ZZ^2\longrightarrow\CC$
such that $$\forall x\in\ZZ,\qquad q(x,0)= p(x).$$
In particular, $q(x,y)\equiv 0$ if and only if $p(x)\equiv 0$. If
this is not the case,
$$\deg(q)=\deg(p).$$
\end{theorem}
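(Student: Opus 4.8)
The plan is to translate the discrete analyticity condition into a relation between the two difference operators and then invoke the extension result for compatible pairs of polynomials (Proposition \ref{p13}), together with the one-variable representation from Section \ref{sec2}. First I would set up the existence part. Given a polynomial $p:\ZZ\longrightarrow\CC$, I want a discrete analytic polynomial $q$ on $\ZZ^2$ with $q(x,0)=p(x)$. Using Theorem \ref{maindef2}, $q$ is discrete analytic exactly when
\[
(1-i)\delta_x q+(1+i)\delta_y q+\delta_x\delta_y q=0,
\]
which can be rewritten as $\delta_y\bigl((1+i)q+\delta_x q\bigr)=-(1-i)\delta_x q$, i.e. as a compatibility condition of the form \eqref{co01} for a suitable pair built from $q$. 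I would construct $q$ inductively in the $y$-variable: set $q(\cdot,0)=p$, and at each step solve for $q(\cdot,y+1)$ from $q(\cdot,y)$ using the discrete Cauchy-Riemann relation, checking at each stage that the function obtained on $\ZZ$ is again a polynomial (this uses Propositions \ref{p01} and \ref{p03}, which guarantee that $\delta$ and its "antiderivative" preserve polynomials, and control the degree). Alternatively — and this is probably cleaner — I would run the induction on the degree of $p$: write $p$ in the basis $x^{[n]}$ via Proposition \ref{tay2}, reduce to the case $p(x)=x^{[n]}$, and produce the discrete analytic extension $\zeta_n(x,y)$ explicitly (these are the polynomials promised in Section \ref{sec5}); linearity then gives $q$ for general $p$.

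For uniqueness, suppose $q_1,q_2$ are two discrete analytic polynomials agreeing on $\ZZ\times\{0\}$. Their difference $r=q_1-q_2$ is a discrete analytic polynomial vanishing on $\ZZ\times\{0\}$; I must show $r\equiv 0$. The discrete Cauchy-Riemann equation for $r$ determines $r(x,y+1)$ in terms of $r(x,y)$ and $r(x+1,y)$ (solving \eqref{DCR} for $f(x+1,y+1)$ and shifting appropriately, or directly from \eqref{COP}): once a full horizontal line $r(\cdot,y)$ is known, the next line $r(\cdot,y+1)$ is forced. Since $r(\cdot,0)\equiv 0$, induction on $y\ge 0$ gives $r\equiv 0$ on $\ZZ\times\ZZ_+$, and the analogous downward recursion (expressing $r(x,y-1)$ from $r(x,y)$ and $r(x+1,y)$) handles $y<0$. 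Here I must be slightly careful: the recursion propagates values along lines, but to conclude the polynomial $r$ is identically zero I then invoke Theorem \ref{tautology}, which says a polynomial vanishing on all of $\ZZ^2$ is the zero polynomial.

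Finally, the degree statement. If $p\equiv 0$ then $q\equiv 0$ is the (unique) extension, and conversely $q\equiv 0$ forces $p=q(\cdot,0)\equiv 0$. When $p\not\equiv 0$ with $\deg p=d$, I expect $\deg q=d$ as well: the explicit extensions $\zeta_n$ of $x^{[n]}$ should have total degree $n$ (one checks the leading term), so writing $q=\sum_{n\le d}\widehat p(n)\,\zeta_n$ with $\widehat p(d)\ne 0$ and verifying no cancellation of top-degree terms gives $\deg q=d$; the inequality $\deg q\ge d$ is immediate since restricting to $y=0$ cannot raise degree, so the content is $\deg q\le d$, which follows from the degree bookkeeping in the inductive construction.

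**Main obstacle.** The delicate point is the construction step: showing that the line-by-line recursion forced by the discrete Cauchy-Riemann equation actually produces a \emph{polynomial} in $(x,y)$ of the right degree, rather than merely a function on $\ZZ^2$. Controlling how $\delta_x^{-1}$-type operations interact with the $y$-recursion — i.e. that each new horizontal slice stays polynomial and the degrees do not blow up — is where the real work lies, and this is exactly what the machinery of Section \ref{sec3} (especially Propositions \ref{p11} and \ref{p13}) is designed to handle; I would lean on Proposition \ref{p13} to package the compatibility condition \eqref{co01}–\eqref{co02} and extract the polynomial extension in one stroke.
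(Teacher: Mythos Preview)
Your uniqueness argument has a real gap. You claim that once a full horizontal line $r(\cdot,y)$ is known, the next line $r(\cdot,y+1)$ is forced by the discrete Cauchy--Riemann equation. This is false: the relation $\bar\cD r=0$ ties together the four corners of each unit square, so knowing line $y$ determines line $y+1$ only up to one free parameter, and indeed the paper notes just before the theorem that any $f_0:\ZZ\to\CC$ has \emph{infinitely many} discrete analytic extensions to $\ZZ^2$. What rescues uniqueness is precisely the polynomial hypothesis, which you must use inside the induction and not merely at the end via Theorem~\ref{tautology}. (Concretely: if $r(\cdot,y)\equiv 0$ then the equation reduces to $r(x+1,y+1)=-i\,r(x,y+1)$ for all $x\in\ZZ$, and a polynomial in $x$ satisfying this must vanish.) The paper takes a different and cleaner route in Lemma~\ref{l1}: a minimal-degree argument. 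If $q$ were a nonzero discrete analytic polynomial of least degree with $q(x,0)\equiv 0$, then $\delta_x q$ would be a smaller one, hence $\delta_x q\equiv 0$; discrete analyticity then forces $\delta_y q\equiv 0$, so $q$ is constant---a contradiction.

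For existence, your ``cleaner'' alternative---reduce to $p=x^{[n]}$ and invoke the $\zeta_n$---is circular as the paper is organized: the $\zeta_n$ are \emph{defined} in Section~\ref{sec5} as the extensions that Theorem~\ref{t1} supplies. Your instinct to lean on Proposition~\ref{p13} is exactly right, but you have not identified the pair to which it applies. The paper runs induction on $\deg p$: by the inductive hypothesis there is a discrete analytic polynomial $f$ of degree $d-1$ with $f(x,0)=(\delta p)(x)$; one then sets $g=if-\tfrac{1-i}{2}\,\delta_y f$ and checks, using $\bar\cD f=0$, that $\delta_x g=\delta_y f$. Proposition~\ref{p13} now yields a polynomial $h$ with $\delta_x h=f$ and $\delta_y h=g$, and a direct computation gives $\bar\cD h=(1-i)f+(1+i)g+\delta_y f=0$. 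Adjusting by a constant matches $h(x,0)$ to $p(x)$. The step you are missing is the definition of $g$ and the verification of the compatibility $\delta_x g=\delta_y f$; this is what makes Proposition~\ref{p13} bite, and the degree claim $\deg q=\deg p$ then follows from the inductive bookkeeping.
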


\begin{lemma}\label{l1}
Let $q:\ZZ^2\longrightarrow\CC$ be a discrete analytic polynomial,
such that $q(x,0)\equiv 0.$ Then $q(x,y)\equiv 0.$
\end{lemma}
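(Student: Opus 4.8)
The plan is to prove Lemma \ref{l1} by an inductive argument that propagates the vanishing of $q$ from the line $y=0$ to all of $\ZZ^2$, using the discrete Cauchy–Riemann relation in the operator form of Theorem \ref{maindef2}. First I would observe that the equation $(\bar\cD q)(x,y)=0$ can be solved for the ``corner'' value $q(x+1,y+1)$: expanding $\bar\cD = (1-i)\delta_x + (1+i)\delta_y + \delta_x\delta_y$ via \eqref{comm} gives a linear relation expressing $q(x+1,y+1)$ in terms of $q(x,y)$, $q(x+1,y)$, and $q(x,y+1)$ (this is exactly the rewriting already displayed in the proof of Theorem \ref{maindef2}). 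Symmetrically, one can solve for $q(x,y)$ in terms of the other three corners, so the relation lets us step in the $+y$ direction and in the $-y$ direction.

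Next I would run the induction on $y \ge 0$: the hypothesis gives $q(x,0)\equiv 0$ for all $x\in\ZZ$, and since $q$ restricted to the line $y=0$ is identically zero, $\delta_x q(x,0) = 0$ as well; feeding $q(x,0)=q(x+1,0)=0$ into the corner relation forces $q(x,1)$ to be a fixed multiple of $q(x+1,1)$, or more precisely the relation at height $y=0$ becomes a first-order recurrence in $x$ for the function $x\mapsto q(x,1)$. Here is where I would use that $q$ is a \emph{polynomial}: the sequence $q(x,1)$ is a polynomial in $x$ satisfying a homogeneous constant-coefficient recurrence $a\,q(x+1,1) + b\,q(x,1) = 0$ with $a,b\ne 0$, and the only polynomial solution of such a recurrence is the zero polynomial (a nonzero polynomial solution would force $|q(x+1,1)/q(x,1)|\to 1$, incompatible with the ratio $-b/a\ne \pm1$ coming from $(1\mp i)$; alternatively, substitute the expansion in the basis $x^{[n]}$ and use \eqref{hu2} to get a triangular system forcing all coefficients to vanish). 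Thus $q(x,1)\equiv 0$, and then induction upward gives $q(x,y)\equiv 0$ for all $y\ge 0$; the same argument run with the corner relation solved for the lower-left value propagates downward to $y<0$.

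The main obstacle, and the step deserving the most care, is the claim that a polynomial sequence satisfying the first-order recurrence produced at each stage must vanish; one must check that the coefficient of the shifted term and of the unshifted term are the specific nonzero numbers $1-i$ (up to the $\delta_x\delta_y$ contribution, which is absorbed once the lower row is zero) so that their ratio is not a root of unity, and then invoke the elementary fact that no nonzero polynomial $p$ satisfies $p(x+1) = c\,p(x)$ for a constant $c$ unless $c=1$ and $p$ is constant — and $c=1$ is excluded here. Once $q(x,0)\equiv 0$ and $q(\cdot,1)\equiv 0$ are in hand, the recurrence at subsequent heights becomes genuinely first order in $x$ again with the $\delta_x\delta_y$ term dropping out, so each successive row is handled identically, and the induction closes cleanly in both directions. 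An alternative, slicker route — which I would mention as a remark — is to use Proposition \ref{p13}: write the discrete Cauchy–Riemann system as a compatibility condition on $\delta_x q$ and $\delta_y q$ and peel off degrees, but the direct recurrence argument above is the most transparent.
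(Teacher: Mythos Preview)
Your argument is correct and genuinely different from the paper's. The paper argues by minimal degree: if a nonzero counterexample $q$ existed, take one of smallest degree; then $\delta_x q$ is again discrete analytic, vanishes on $y=0$, and has strictly smaller degree, so $\delta_x q\equiv 0$; feeding this into $\bar\cD q=0$ forces $\delta_y q\equiv 0$ too, whence $q$ is constant, a contradiction. Your approach instead propagates row by row: once row $y$ vanishes, the Cauchy--Riemann relation collapses to the single recurrence $q(x+1,y+1)=-i\,q(x,y+1)$ (and $q(x+1,y-1)=i\,q(x,y-1)$ downward), and no nonzero polynomial in $x$ can satisfy $p(x+1)=c\,p(x)$ with $c\neq 1$. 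The paper's route is shorter and avoids computing the explicit recurrence constant; yours is more constructive and makes transparent exactly how the vanishing spreads.

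One small slip to fix: the specific constant is $c=-\dfrac{1+i}{1-i}=-i$, which has $|c|=1$, so your parenthetical ``$|q(x+1,1)/q(x,1)|\to 1$ is incompatible with $-b/a\neq\pm1$'' does not actually rule anything out. The cleaner justification---which you also state---is that for a nonzero polynomial the leading coefficients of $p(x+1)$ and $p(x)$ coincide, forcing $c=1$; since $c=-i\neq 1$, $p\equiv 0$. With that correction your proof is complete.
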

\begin{proof}
Assume the opposite, then $\deg(q)>0,$ and $q$ can be chosen so
that $\deg q$ is the smallest possible. Observe that
$(\delta_xq)(x,y)$ is also a discrete analytic polynomial, that $
(\delta_xq)(x,0)\equiv 0,$ and that $\deg(\delta_xp)<\deg(p).$
Hence $(\delta_xq)(x,y)\equiv 0.$ Since $q(x,y)$ is discrete
analytic, Definition \ref{maindef2} implies that
$$\forall (x,y)\in\ZZ^2,\qquad (\delta_yq)(x,y)=
\dfrac{i-1}{2}((1-i+\delta_y)\delta_xq)(x,y)= 0.$$ Thus
$$(\delta_xq)(x,y)\equiv(\delta_yq)(x,y)\equiv 0$$
and hence $q=\const$ - a contradiction.
\end{proof}
\begin{proof}[Proof of Theorem \ref{t1}]
The uniqueness of the polynomial $q(x,y)$ follows from Lemma
\ref{l1}. The existence in the case $p=\const$ is clear: it
suffices to set
$$q(x,y)=p(0).$$  If $p\not=\const$ we proceed by induction on
$d=\deg(p)$. According to Proposition \ref{p03}, $(\delta p)(x)$
is a polynomial and $\deg(\delta p)=d-1$. Therefore, by the
induction assumption, there is a discrete analytic polynomial
$f(x,y)$,  such that
 $$\forall x\in\ZZ,\qquad f(x,0)=(\delta p)(x)$$
 and  $\deg(f)=d-1.$ Let $g:\ZZ^2\longrightarrow\CC$ be defined by
 $$g(x,y)=if(x,y)-\dfrac{1-i}{2}(\delta_yf)(x,y),$$
 then $g(x,y)$ is also a discrete analytic polynomial,
 $\deg(g)=d-1.$ Furthermore, since $(\bar Df)(x,y)\equiv 0,$
\begin{multline*}\forall (x,y)\in\ZZ^2,\quad
(\delta_xg)(x,y)=i(\delta_xf)(x,y)-\dfrac{1-i}{2}(\delta_x\delta_yf)(x,y)\\=
i(\delta_xf)(x,y)+\dfrac{1-i}{2}((1-i)(\delta_xf)(x,y)+(1+i)(\delta_yf)(x,y))=
(\delta_yf)(x,y).\end{multline*}
 Hence, according to Proposition \ref{p13}, there exists a
 polynomial $h:\ZZ^2\longrightarrow\CC$ such that
 $$(\delta_xh)(x,y)\equiv f(x,y),\quad (\delta_y h)(x,y)\equiv
 g(x,y).$$
 Since
$$\forall (x,y)\in\ZZ^2,\qquad (\bar D
h)(x,y)=(1-i)f(x,y)+(1+i)g(x,y)+(\delta_y f)(x,y)=0,$$ the
polynomial $h(x,y)$ is discrete analytic. Finally, since
$$\forall x\in\ZZ,\qquad (\delta_x h)(x,0)=f(x,0)=(\delta p)(x),$$
$h(x,0)-p(x)$ is a constant function. Thus it suffices to set
$$q(x,y)=h(x,y)-h(0,0)+p(0)$$ to complete the proof.
\end{proof}

\section{Expandable discrete analytic functions}
\label{sec5}
In view of Theorem \ref{t1}, there exists a unique
discrete analytic polynomial $\zeta_n(x,y)$ determined by
$$\zeta_n(x,0)\equiv x^{[n]}.$$
Then (as follows from Proposition \ref{p11} and identities
\eqref{hu2},
 \eqref{comm}) $(\delta_x\zeta_n)(x,y)$ is
also a discrete analytic polynomial such that
$$(\delta_x\zeta_n)(x,0)\equiv \delta x^{[n]}\equiv nx^{[n-1]}.$$
Hence, by the uniqueness part of Theorem \ref{t1},
\[\label{hu3}(\delta_x\zeta_n)(x,y)\equiv n\zeta_{n-1}(x,y)\]
(if $n=0$, $\zeta_0(x,y)\equiv 1$ and
$(\delta_x\zeta_0)(x,y)\equiv 0$).

\begin{proposition}
\label{prop6.1} For each $(x,y)\in\ZZ^2$ the function
\[
\label{exy}
e_{x,y}(z)=(1+z)^x\left(\dfrac{1+i+iz}{1+i+z}\right)^y,
\]
is analytic (in the usual sense) in the variable $z$ in the open
unit disk $\mathbb D$, and admits the Taylor expansion
\begin{equation}
\label{exyseries} e_{x,y}(z)=\sum_{n\in\ZZ_+}\dfrac{z^n
\zeta_n(x,y)}{n!},\quad \forall z\in\DD.
\end{equation}
\end{proposition}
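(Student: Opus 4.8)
The analyticity statement is the easy half. For $x\ge 0$ the factor $(1+z)^x$ is a polynomial, and for $x<0$ it is a rational function whose only singularity sits at $z=-1\in\partial\DD$; the rational function $\dfrac{1+i+iz}{1+i+z}$ has its unique pole at $z=-(1+i)$ and its unique zero at $z=-1+i$, both of modulus $\sqrt 2>1$, so $\left(\dfrac{1+i+iz}{1+i+z}\right)^y$ is analytic and zero-free on a neighbourhood of $\overline{\DD}$ for every $y\in\ZZ$. Hence $e_{x,y}$ is analytic in $\DD$, and I may write its Taylor expansion as $e_{x,y}(z)=\sum_{n\in\ZZ_+}\dfrac{z^n g_n(x,y)}{n!}$ with $g_n(x,y):=n!\,[z^n]e_{x,y}(z)$. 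All the content is then the identity $g_n\equiv\zeta_n$; once it is proved, the asserted convergence of $\sum_n z^n\zeta_n(x,y)/n!$ comes for free, being the Taylor series of $e_{x,y}$.

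To identify $g_n$ with $\zeta_n$ I would invoke the uniqueness in Theorem~\ref{t1}: it suffices to check that (i) $g_n$ is a polynomial in $(x,y)$; (ii) $g_n$ is discrete analytic on $\ZZ^2$; and (iii) $g_n(x,0)=x^{[n]}$. Point (iii) is immediate from $e_{x,0}(z)=(1+z)^x=\sum_n x^{[n]}z^n/n!$, the generating function of Section~\ref{sec2}. For (i), expand each factor as a binomial series: $(1+z)^x=\sum_j \binom{x}{j}z^j$ with $\binom{x}{j}=x^{[j]}/j!$ a polynomial in $x$, and, writing $\dfrac{1+i+iz}{1+i+z}=\dfrac{1+\alpha z}{1+\beta z}$ with $\alpha=i/(1+i)$ and $\beta=1/(1+i)$ (both of modulus $1/\sqrt2$), $\left(\dfrac{1+\alpha z}{1+\beta z}\right)^y=(1+\alpha z)^y(1+\beta z)^{-y}$ is a product of two binomial series with coefficients polynomial in $y$; multiplying out, $[z^n]e_{x,y}(z)$ is a finite sum, hence a polynomial in $(x,y)$ of total degree $\le n$. (Alternatively one gets (i) without the explicit expansion: the functional equations below yield $\delta_x g_n=n g_{n-1}$ and a recurrence expressing $\delta_y g_n$ through $g_0,\dots,g_{n-1}$, so an induction combined with Proposition~\ref{p13}, applied to the compatible pair $\delta_x g_n,\delta_y g_n$, shows $g_n$ is a polynomial.)

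The heart of the matter is (ii), and the clean route is to prove it ``all at once'' before extracting coefficients. From the product formula, $e_{x+1,y}(z)=(1+z)e_{x,y}(z)$ and $e_{x,y+1}(z)=\dfrac{1+i+iz}{1+i+z}e_{x,y}(z)$, so, with $\bar\cD=(1-i)\delta_x+(1+i)\delta_y+\delta_x\delta_y$ acting in the lattice variables $(x,y)$,
\[
(\delta_x e_{\cdot,\cdot}(z))(x,y)=z\,e_{x,y}(z),\qquad
(\delta_y e_{\cdot,\cdot}(z))(x,y)=\frac{(i-1)z}{1+i+z}\,e_{x,y}(z),
\]
and $(\delta_x\delta_y e_{\cdot,\cdot}(z))(x,y)=\dfrac{(i-1)z^2}{1+i+z}e_{x,y}(z)$. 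Substituting into $\bar\cD$, the scalar factor multiplying $e_{x,y}(z)$ becomes, after clearing the denominator $1+i+z$, the expression $(1-i)z(1+i+z)+(1+i)(i-1)z+(i-1)z^2$, which collapses to $0$ since $(1-i)(1+i)=2$, $(1+i)(i-1)=-2$, and $(i-1)=-(1-i)$. Thus $(\bar\cD e_{\cdot,\cdot}(z))(x,y)=0$ for all $(x,y)\in\ZZ^2$ and $z\in\DD$; as $\bar\cD$ is a fixed finite combination of shifts in $(x,y)$ and the power series involved converge uniformly on compacta of $\DD$, expanding in powers of $z$ gives $(\bar\cD g_n)(x,y)=0$ for every $n$, i.e.\ each $g_n$ is discrete analytic by Theorem~\ref{maindef2}. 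With (i)--(iii) in hand, Theorem~\ref{t1} forces $g_n\equiv\zeta_n$, and \eqref{exyseries} follows.

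The obstacle I anticipate is essentially bookkeeping: keeping distinct the two sets of variables --- $z$, in which one Taylor-expands, versus the lattice variables $(x,y)$, on which $\delta_x,\delta_y,\bar\cD$ and Theorem~\ref{t1} operate --- and justifying the termwise passage from the single identity $\bar\cD e_{\cdot,\cdot}(z)=0$ to the whole family $\bar\cD g_n=0$. The lone genuinely computational point, the cancellation in the bracketed scalar above, is a one-line check.
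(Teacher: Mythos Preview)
Your proof is correct and follows the same overall line as the paper's: expand $e_{x,y}(z)$ as a power series in $z$, verify that the coefficient functions are discrete analytic by checking $\bar\cD e_{\cdot,\cdot}(z)=0$ via the functional equations $e_{x+1,y}=(1+z)e_{x,y}$ and $e_{x,y+1}=\dfrac{1+i+iz}{1+i+z}\,e_{x,y}$, and then identify the coefficients with $\zeta_n$ by uniqueness. The one substantive difference is in the last step. You first establish that each $g_n$ is a polynomial in $(x,y)$ (via the explicit binomial expansion) and then invoke Theorem~\ref{t1} directly. The paper instead avoids the polynomiality check altogether: from $e_{x+1,y}(z)=(1+z)e_{x,y}(z)$ it extracts the recurrence $\delta_x c_n=nc_{n-1}$ and inducts on $n$, arguing that if $c_n=\zeta_n$ then $\delta_x(c_{n+1}-\zeta_{n+1})=0$, which together with $\bar\cD(c_{n+1}-\zeta_{n+1})=0$ forces $\delta_y(c_{n+1}-\zeta_{n+1})=0$, hence the difference is constant and vanishes at the origin. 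Your route buys a clean citation of the existing uniqueness theorem at the cost of the (easy) binomial-coefficient argument; the paper's route is slightly slicker in that it never needs to know the $c_n$ are polynomials.
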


\begin{proof} The analyticity is clear because $x,y\in\ZZ$ and we have
$$e_{x,y}(z)=\sum_{n\in\ZZ_+}\dfrac{z^n c_n(x,y)}{n!},\quad \forall z\in\DD.$$
where
$$c_n(x,y)=\dfrac{d^n}{dz^n}e_{x,y}\Bigm|_{z=0}.$$
Since
$$e_{x+1,y}(z)-e_{x,y}(z)=ze_{x,y}(z)=\sum_{n\in\ZZ_+}\dfrac{z^{n+1}
c_{n}(x,y)}{n!},$$
$$\forall (x,y)\in\ZZ^2,\,\,\forall n\in\ZZ_+,\qquad
(\delta_xc_{n})(x,y)=nc_{n-1}(x,y)$$ (if $n=0,$ $c_0(x,y)\equiv 1$
and $(\delta_xc_0)(x,y)\equiv 0$). Similarly, since
\[
\begin{split}
\dfrac{e_{x+1,y+1}(z)-e_{x,y}(z)}{1+ i}-\dfrac{e_{x+1,y}(z)-e_{x,y+1}(z)}{1-i}&=\\
&\hspace{-5cm}=\left(
(1+z)\dfrac{1+i+iz}{1+i+z}-1\right)\dfrac{e_{x,y}(z)}{1+i}-\left(
1+z-\dfrac{1+i+iz}{1+i+z}\right)\dfrac{e_{x,y}(z)}{1-i}\\
&\hspace{-5cm}=0,
\end{split}
\]
we have
$$\forall (x,y)\in\ZZ^2,\,\,{\rm and}\,\,\forall n\in\ZZ_+,\qquad
(\bar Dc_{n})(x,y)=0.$$ Thus, for every $n\in\ZZ_+$, the function
$c_n:\ZZ^2\longrightarrow\CC$ is discrete analytic. Next, we show
by induction that
\[
\label{hu4} \forall n\in\ZZ_+,\quad c_n(x,y)\equiv\zeta_n(x,y).
\]
Indeed, for $n=0$,
$$c_0(x,y)\equiv 1\equiv\zeta_0(x,y).$$
Assume that, for some $n\in\ZZ_+$,
$$c_n(x,y)\equiv\zeta_n(x,y),$$
then, in view of \eqref{hu3},
$$(\delta_xc_{n+1}(x,y)\equiv(n+1)c_n(x,y)\equiv
(n+1)\zeta_n(x,y)\equiv(\delta_x\zeta_{n+1}(x,y),$$ hence
$$(\delta_x(\zeta_{n+1}-c_{n+1}))(x,y)\equiv 0.$$
But the functions $c_{n+1}$ and $\zeta_{n+1}$ are discrete
analytic, hence $$(\bar D(\zeta_{n+1}-c_{n+1}))(x,y)\equiv 0$$ and
$$(\delta_y(\zeta_{n+1}-c_{n+1}))(x,y)\equiv 0.$$
It follows that $$\zeta_{n+1}-c_{n+1}=\const;$$ since
$$\zeta_{n+1}(0,0)=0=c_{n+1}(0,0),$$
one concludes that
$$c_{n+1}(x,y)\equiv\zeta_{n+1}(x,y),$$
and \eqref{hu4} follows.
\end{proof}

\begin{corollary}
Let $x,n\in\mathbb Z_+$. Then,
\[
\label{value:zeta:n}
\zeta_n(x,0)=\begin{cases} x(x-1)\cdots
(x-n+1)=x^{[n]},\,\, if\quad n\le x\\
\quad0,\quad \hspace{3.82cm}if \quad n>x.
\end{cases}
\]
\end{corollary}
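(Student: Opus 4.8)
The statement is essentially a direct unwinding of the definition of $\zeta_n$, so I would keep the argument very short. Recall that at the beginning of Section~\ref{sec5} the polynomial $\zeta_n(x,y)$ was \emph{defined} (via the existence part of Theorem~\ref{t1}) to be the unique discrete analytic polynomial satisfying $\zeta_n(x,0)\equiv x^{[n]}$. Hence for every $x\in\ZZ$ we simply have $\zeta_n(x,0)=x^{[n]}$, and it only remains to evaluate $x^{[n]}$ for $x,n\in\ZZ_+$ using the explicit formula \eqref{poly}, namely $x^{[n]}=\prod_{j=0}^{n-1}(x-j)$.

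The plan is then to case-split on the sign of $x-n+1$. First, if $n=0$ the product is empty and equals $1=x^{[0]}$, consistent with the first branch read as an empty product; so assume $n\ge 1$. If $n\le x$, then for each index $j\in\{0,1,\dots,n-1\}$ we have $0<x-j$, so none of the factors vanishes and $x^{[n]}=x(x-1)\cdots(x-n+1)$, which is the first branch. If instead $n>x$, then $j=x$ lies in the index set $\{0,1,\dots,n-1\}$ (since $x\le n-1$), so the factor $x-j=x-x=0$ occurs and the whole product is $0$, which is the second branch. This exhausts all cases for $x,n\in\ZZ_+$ and proves the displayed formula \eqref{value:zeta:n}.

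There is no real obstacle here: the only point requiring a moment's care is the bookkeeping of \emph{when} the zero factor appears, i.e.\ observing that $x\in\{0,1,\dots,n-1\}$ is equivalent to $n>x$, together with noting that the $n=0$ case is covered by the empty-product convention already fixed in \eqref{poly}.
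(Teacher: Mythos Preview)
Your argument is correct. You appeal directly to the defining property $\zeta_n(x,0)\equiv x^{[n]}$ from the start of Section~\ref{sec5} and then evaluate the falling factorial $x^{[n]}$ by casework on whether the factor $x-x=0$ appears in the product. This is clean and self-contained.

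The paper takes a different route: it places the corollary immediately after Proposition~\ref{prop6.1} and derives it from the generating-function identity proved there. Setting $y=0$ in \eqref{exyseries} gives $(1+z)^x=\sum_{n\in\ZZ_+}\frac{z^n}{n!}\zeta_n(x,0)$, and comparing with the binomial expansion $(1+z)^x=\sum_{n=0}^{x}\binom{x}{n}z^n$ yields $\zeta_n(x,0)=n!\binom{x}{n}$, which is $x^{[n]}$ for $n\le x$ and $0$ for $n>x$. Your approach is more elementary and arguably more natural, since it uses nothing beyond the definition of $\zeta_n$ and the explicit formula \eqref{poly}; the paper's approach is a slight detour through the Taylor series of $e_{x,y}$, but it has the minor expository advantage of showcasing the generating function just established.
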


\begin{proof} Set $y=0$ in $e_{x,y}(z)$ in \eqref{exy}. By
\eqref{hu4} we get
\[
\label{eq:y=0}
(1+z)^x=\sum_{n\in\ZZ_+}\frac{z^n}{n!}\zeta_n(x,0).
\]
\eqref{value:zeta:n} follows by comparing the coefficients of
$z^n$ in \eqref{eq:y=0}.
\end{proof}

\begin{theorem}
\label{lim2} It holds that
$$\forall(x,y)\in\ZZ_+\times
(\ZZ\setminus\{0\}),\qquad \limsup_{n\rightarrow\infty}
\left(\dfrac{|\zeta_n(x,y)|}{n!}\right)^{1/n}=
\dfrac{1}{\sqrt{2}}.$$
\end{theorem}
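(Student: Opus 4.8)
The plan is to extract the asymptotics of $\zeta_n(x,y)$ from the generating function $e_{x,y}(z)=(1+z)^x\bigl(\tfrac{1+i+iz}{1+i+z}\bigr)^y$ of Proposition \ref{prop6.1}, using the Cauchy–Hadamard formula: since $e_{x,y}(z)=\sum_{n\in\ZZ_+}\tfrac{z^n}{n!}\zeta_n(x,y)$, the quantity $\limsup_{n\to\infty}\bigl(|\zeta_n(x,y)|/n!\bigr)^{1/n}$ equals $1/\rho$, where $\rho=\rho(x,y)$ is the radius of convergence of this Taylor series, i.e. the distance from $0$ to the nearest singularity of $z\mapsto e_{x,y}(z)$. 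So the whole theorem reduces to locating the singularities of $e_{x,y}$ for $x\in\ZZ_+$ and $y\in\ZZ\setminus\{0\}$.

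First I would observe that for $x\in\ZZ_+$ the factor $(1+z)^x$ is a polynomial, hence entire, and contributes no singularities. The factor $\bigl(\tfrac{1+i+iz}{1+i+z}\bigr)^y$ is a rational function raised to the integer power $y$ (positive or negative); its only possible singularities are at the zero $z=-(1+i)$ of the denominator $1+i+z$ (relevant when $y>0$ this is a removable point unless the numerator also... wait — when $y<0$ it is a pole; when $y>0$ the singularity comes instead from the zero $z=-(1+i)/i=-(1-i)=i-1$ of the numerator $1+i+iz$, which becomes a pole of the reciprocal). In either case $y\neq 0$, the nearest singularity of $e_{x,y}$ to the origin is at one of the two points $-(1+i)$ or $i-1$, both of which have modulus $\sqrt{2}$. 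Hence $\rho(x,y)=\sqrt{2}$ and $\limsup_{n\to\infty}\bigl(|\zeta_n(x,y)|/n!\bigr)^{1/n}=1/\sqrt{2}$.

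Two points need a little care. First, one must check that the relevant singularity is genuinely present and not cancelled: when $y>0$, the point $z=i-1$ is a pole of $\bigl(\tfrac{1+i+iz}{1+i+z}\bigr)^y$ of order $|y|\ge 1$, and multiplying by the polynomial $(1+z)^x$ does not remove it since $1+z$ does not vanish at $z=i-1$ (as $1+(i-1)=i\neq 0$); symmetrically, when $y<0$ the point $z=-(1+i)$ is a pole and $1+(-(1+i))=-i\neq 0$, so again it survives. Second, one should note that $\sqrt{2}>1$, so in both cases the unique singularity on the circle $|z|=\sqrt{2}$ is isolated and there is no competing singularity inside that circle; thus the radius of convergence is exactly $\sqrt2$, with no room for the $\limsup$ to be larger. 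The main obstacle — really the only thing to get right — is the bookkeeping of which of the two candidate points is the operative singularity according to the sign of $y$, and verifying non-cancellation against the polynomial factor; once that is pinned down the result is immediate from Cauchy–Hadamard.
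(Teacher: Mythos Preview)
Your approach is exactly the paper's: identify the radius of convergence of the Taylor series \eqref{exyseries} via the nearest singularity of $e_{x,y}$, which for $x\in\ZZ_+$ and $y\neq 0$ lies on $|z|=\sqrt{2}$, and invoke Cauchy--Hadamard. The paper's proof is terser and does not split into cases by the sign of $y$.

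One small bookkeeping slip: you have the two cases interchanged. For $y>0$ the pole of $\bigl(\tfrac{1+i+iz}{1+i+z}\bigr)^y$ is at the zero of the \emph{denominator}, namely $z=-(1+i)$; the point $z=i-1$ is where the numerator vanishes, hence a zero (not a pole) when $y>0$ and a pole when $y<0$. This does not affect the conclusion, since both candidate points have modulus $\sqrt{2}$ and your non-cancellation checks $1+(i-1)=i\neq 0$ and $1+(-(1+i))=-i\neq 0$ are correct either way.
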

\begin{proof} When $x\ge 0$ the function \eqref{exy} is analytic
in the variable $z$ in the disk centered at the origin and of
radius $\sqrt{2}$, and has a pole on the boundary of this disk.
Hence the radius of convergence of the McLaurin series is
precisely $\sqrt{2}$, and
$$\limsup_{n\rightarrow\infty}
\left(\dfrac{|\zeta_n(x,y)|}{n!}\right)^{1/n}=
\dfrac{1}{\sqrt{2}}.$$
\end{proof}

\begin{theorem}\label{p52}
Let  $g:\ZZ_+\longrightarrow\CC$ be such that for every
$(x,y)\in\ZZ_+\times\ZZ$ the series
\[\label{tay41}f(x,y)=\sum_{n\in\ZZ_+}
g(n)\zeta_n(x,y)\] converges absolutely. Then the function
 $f:\ZZ_+\times \ZZ\longrightarrow \CC$, defined by \eqref{tay41}, is discrete analytic, and it holds that
$$g(n)\equiv \hat{f_0}(n),$$ where the function
$f_0:\ZZ_+\longrightarrow\CC$ is given by
\[\label{rest}f_0(x)=f(x,0),\quad x\in\ZZ_+.\]
\end{theorem}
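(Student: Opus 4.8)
The plan is to establish the three assertions in order: that $f$ is discrete analytic, that $g = \hat{f_0}$, and implicitly that the construction is consistent. The starting point is that each $\zeta_n$ is a discrete analytic polynomial, hence $(\bar{\cD}\zeta_n)(x,y) = 0$ for all $(x,y)$. Since $\bar{\cD} = (1-i)\delta_x + (1+i)\delta_y + \delta_x\delta_y$ involves only finitely many point-evaluations (namely at $(x,y)$, $(x+1,y)$, $(x,y+1)$, $(x+1,y+1)$), applying $\bar{\cD}$ to the series \eqref{tay41} amounts to a finite linear combination of the four absolutely convergent series $\sum_n g(n)\zeta_n$ evaluated at these four lattice points. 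Therefore I can interchange $\bar{\cD}$ with the summation:
\[
(\bar{\cD}f)(x,y) = \sum_{n\in\ZZ_+} g(n)(\bar{\cD}\zeta_n)(x,y) = 0,
\]
which shows $f$ is discrete analytic on $\ZZ_+\times\ZZ$ by Theorem \ref{maindef2}.

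Next I identify $g$ with the Fourier transform of the restriction $f_0$. Setting $y=0$ in \eqref{tay41} and using the corollary, $\zeta_n(x,0) = x^{[n]}$, gives
\[
f_0(x) = f(x,0) = \sum_{n\in\ZZ_+} g(n) x^{[n]}, \qquad x\in\ZZ_+.
\]
Since for fixed $x\in\ZZ_+$ only the terms with $n\le x$ survive (as $x^{[n]}=0$ for $n>x$), this is a genuine finite-sum identity of the form appearing in Proposition \ref{tay2}. By the uniqueness part of that proposition, the coefficient function $g$ must coincide with $\hat{f_0}$, where $\hat{f_0}(n) = (\delta^n f_0)(0)/n!$. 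This settles the second claim.

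The main obstacle is the legitimacy of the term-by-term application of $\bar{\cD}$, which I handled above by observing that $\bar{\cD}$ is a finite-support difference operator and that all four relevant series converge absolutely by hypothesis; absolute convergence of a fixed finite collection of series is exactly what licenses rearrangement of a finite linear combination of them. A secondary point worth a remark is that the hypothesis only assumes convergence on $\ZZ_+\times\ZZ$, so all claims are made on that domain; one should not assert discrete analyticity on a larger set. I expect the whole argument to be short, with the only subtlety being the (routine) justification that summation commutes with the difference operators $\delta_x$, $\delta_y$ and hence with $\bar{\cD}$.
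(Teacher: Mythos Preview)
Your proposal is correct and follows essentially the same approach as the paper: discrete analyticity is inherited from the $\zeta_n$ (the paper simply says this ``follows directly,'' while you spell out the finite-stencil justification for interchanging $\bar\cD$ with the sum), and the identification $g=\hat{f_0}$ comes from setting $y=0$ and invoking the uniqueness in Proposition~\ref{tay2}. Your version is in fact slightly more detailed than the paper's in justifying the termwise application of $\bar\cD$.
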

\begin{proof}
The discrete analyticity of $f$ follows directly from the discrete
analyticity of the polynomials $\zeta_n$. Furthermore, when $y=0$
the formula \eqref{tay41} becomes
$$f_0(x)=\sum_{n\in\ZZ_+}
g(n)x^{[n]},$$ hence, according to Proposition \ref{tay2},
$g=\hat{f_0}$.
\end{proof}

Now we can introduce the main class of functions to be considered
in this paper.

\begin{definition}
\label{5.1}
A function $f:\ZZ_+\times \ZZ\longrightarrow \CC$ is
said to be {\em expandable} if:\begin{enumerate}
\item the Fourier transform $\hat{f_0}$ of the function $f_0:\ZZ_+\longrightarrow\CC$,  given by
\eqref{rest}, satisfies the estimate
\begin{equation}
\label{esti}
\limsup_{n\rightarrow\infty}(|\hat{f_0}(n)|n!)^{1/n}<\sqrt{2};
\end{equation}
\item the function $f$ admits the representation
$$ f(x,y)=\sum_{n\in\ZZ_+}\hat{f_0}(n)\zeta_n(x,y),\quad (x,y)\in\ZZ_+\times\ZZ.$$
\end{enumerate}
\end{definition}

The class of expandable functions contains all discrete analytic
polynomials, and elements of this class are determined by their
values on the positive horizontal axis.

\begin{corollary}
\label{c6.3}
Suppose that $f_0:\ZZ_+\longrightarrow \CC$ is
rational. Then there exists a unique expandable function
$f:\ZZ_+\times \ZZ\longrightarrow \CC$ such that
$$f(x,0)\equiv f_0(x).$$
\end{corollary}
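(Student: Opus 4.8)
The plan is to prove existence by writing down the only possible candidate --- the $\zeta_n$-expansion of $f_0$ --- and checking it works, and to obtain uniqueness for free from the definition of expandability.

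For existence, the first step is to observe that, $f_0$ being rational, Theorem~\ref{lim1} yields
$$\limsup_{n\to\infty}\bigl(|\widehat{f_0}(n)|\,n!\bigr)^{1/n}\le 1<\sqrt{2},$$
so clause~(1) of Definition~\ref{5.1} --- the Cauchy-type estimate \eqref{esti} --- holds automatically. Next I would verify that the series $\sum_{n\in\ZZ_+}\widehat{f_0}(n)\,\zeta_n(x,y)$ converges absolutely at every point $(x,y)\in\ZZ_+\times\ZZ$. The case $y=0$ is immediate, since by the corollary to Proposition~\ref{prop6.1} one has $\zeta_n(x,0)=x^{[n]}=0$ for $n>x$, so the series is a finite sum there. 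For $y\ne0$, Theorem~\ref{lim2} supplies $\limsup_{n\to\infty}\bigl(|\zeta_n(x,y)|/n!\bigr)^{1/n}=1/\sqrt{2}$, and combining this with the estimate above (using $\limsup(a_nb_n)\le(\limsup a_n)(\limsup b_n)$ for nonnegative sequences) gives
$$\limsup_{n\to\infty}\bigl|\widehat{f_0}(n)\,\zeta_n(x,y)\bigr|^{1/n}\le 1\cdot\frac{1}{\sqrt{2}}<1,$$
so the root test applies. Having established absolute convergence everywhere, I would invoke Theorem~\ref{p52} with $g=\widehat{f_0}$ to conclude that $f(x,y):=\sum_{n\in\ZZ_+}\widehat{f_0}(n)\,\zeta_n(x,y)$ is discrete analytic, while $f(x,0)=\sum_n\widehat{f_0}(n)x^{[n]}=f_0(x)$ by Proposition~\ref{tay2}. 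Then $f$ satisfies both clauses of Definition~\ref{5.1} and restricts to $f_0$ on the horizontal axis, which is existence.

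For uniqueness, suppose $f$ and $g$ are both expandable with $f(x,0)\equiv g(x,0)\equiv f_0(x)$. By clause~(2) of Definition~\ref{5.1}, each of $f$ and $g$ equals $\sum_n\widehat{f_0}(n)\zeta_n(x,y)$ --- the Fourier transform occurring there being that of the common restriction $f_0$ --- so $f=g$.

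I expect the one delicate point to be the absolute convergence for $y\ne0$: the two exponential growth rates involved multiply to exactly $\sqrt{2}\cdot(1/\sqrt{2})=1$, the borderline of the root test, so it is essential to use the \emph{strict} inequality in \eqref{esti} --- here available in the sharper form $\le1$ thanks to Theorem~\ref{lim1} --- in order to land strictly inside the disk of convergence. The line $y=0$ has to be handled on its own because Theorem~\ref{lim2} explicitly excludes it.
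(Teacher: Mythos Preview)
Your argument is correct and is exactly the approach the paper has in mind; the paper's proof consists of the single sentence ``This is a consequence of Theorem~\ref{lim1}'', and you have simply unpacked what that sentence entails---the estimate from Theorem~\ref{lim1}, absolute convergence via Theorem~\ref{lim2} (and the trivial finite-sum case when $y=0$), and uniqueness directly from clause~(2) of Definition~\ref{5.1}. One small remark on your closing paragraph: the ``borderline'' observation $\sqrt{2}\cdot(1/\sqrt{2})=1$ pertains to the general expandable case, not the rational one at hand, where Theorem~\ref{lim1} already gives the comfortable product bound $1\cdot(1/\sqrt{2})<1$; so the situation here is not actually delicate.
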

\begin{proof}
This is a consequence  of Theorem \ref{lim1}.
\end{proof}

\section{The Cauchy-Kovalevskaya product}
\label{sec6}
Theorem \ref{p52} and Corollary \ref{c6.3} allows us
to define a (partially) defined product on expandable functions,
which is everywhere defined on rational functions. This product
will be denoted by $\odot$ and called, for reasons to be explain
later in the section, the Cauchy-Kovalesvskaya product. Consider
$f_1$ and $f_2$ two expandable functions, and assume that the
Fourier transform of the pointwise product $f_1(x,0)f_2(x,0)$
satisfy \eqref{esti}. Then there exists a unique discrete analytic
expandable function $g$ such that
\[
g(x,0)=f_1(x,0)f_2(x,0)
\]
$g$ is called the  Cauchy-Kovalesvskaya product of $f_1$ and
$f_2$ and is denoted by $f_1\odot f_2$. Note that the
Cauchy-Kovalesvskaya product of two rational functions always
exist. We now  give a more formal definition of the product:

\begin{definition}
\label{formal} Let $f:\ZZ^2\longrightarrow \CC$ be a polynomial,
such that
$$f(x,0)\equiv c_0+c_1x+\dots +c_nx^n,$$
 and let
$g:\Omega\times\ZZ\longrightarrow \CC$ be given. The
Cauchy-Kovalevskaya (C-K) product of $f$ and $g$ is defined by
\begin{equation}
\label{ckdef} (g\odot f)(x,y)=(f\odot g)(x,y)=c_0g(x,y)+c_1(\cZ
g)(x,y)+\dots+c_n(\cZ^ng)(x,y).
\end{equation}
\end{definition}

We shall abbreviate this as
\[
f\odot g=f(\mathcal Z)g.
\]
The commutativity asserted in the definition is proved in the
following theorem.
\begin{theorem}\mbox{}\\
$(1)$ The restriction of the C-K product to $\mathbb Z_+$ is the
product  of the restriction.\\
$(2)$ The C-K product is the unique discrete analytic extension
corresponding to the product of the restrictions.\\
$(3)$ The C-K product is commutative, i.e.
\[
g\odot f=f\odot g
\]
for all choices of discrete analytic polynomials
\end{theorem}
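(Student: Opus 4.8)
The plan is to extract the essential content from the operator $\cZ$ and the expandability machinery already in place. The key object is the multiplication operator $\cZ$: by Definition \ref{formal} the C-K product $f \odot g$ is obtained by writing $f(x,0) = c_0 + c_1 x + \dots + c_n x^n$ and forming $\sum_k c_k (\cZ^k g)$. The operator $\cZ$ must have been constructed (in the material between Definition \ref{formal} and this theorem, which the excerpt cuts off) so that $(\cZ g)(x,0) = x\, g(x,0)$ and so that $\cZ$ maps discrete analytic functions to discrete analytic functions; I would take these two properties of $\cZ$ as the given starting point. From $(\cZ g)(x,0) = x\,g(x,0)$ one gets by induction $(\cZ^k g)(x,0) = x^k g(x,0)$, hence $(f \odot g)(x,0) = \sum_k c_k x^k g(x,0) = f(x,0)\,g(x,0)$, which is exactly statement $(1)$: the restriction of the C-K product to $\ZZ_+$ (indeed to all of $\ZZ$, by Theorem \ref{tautology}) is the pointwise product of the restrictions.

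For statement $(2)$, I would observe that $f \odot g$ is discrete analytic: each $\cZ^k g$ is discrete analytic because $\cZ$ preserves discrete analyticity and $g$ is discrete analytic, and a finite linear combination of discrete analytic functions is discrete analytic (the operator $\bar\cD$ of Theorem \ref{maindef2} is linear). So $f \odot g$ is a discrete analytic function whose restriction to the horizontal axis equals $f(x,0)g(x,0)$. Uniqueness of such an extension is precisely the uniqueness in Theorem \ref{t1} (for polynomials) and in Corollary \ref{c6.3}/Theorem \ref{p52} (for expandable functions): a discrete analytic polynomial — or expandable function — is determined by its values on $\ZZ_+ \times \{0\}$, via Lemma \ref{l1}. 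Thus $f \odot g$ is \emph{the} discrete analytic extension corresponding to the product of the restrictions.

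Statement $(3)$, commutativity, then follows formally from $(1)$ and $(2)$ together: both $f \odot g$ and $g \odot f$ are discrete analytic polynomials, and by $(1)$ both restrict on the horizontal axis to the \emph{same} function, namely the pointwise product $f(x,0)g(x,0) = g(x,0)f(x,0)$ (pointwise multiplication of scalar functions on $\ZZ$ is commutative). By the uniqueness in $(2)$ — i.e.\ Lemma \ref{l1} — two discrete analytic polynomials agreeing on $\ZZ \times \{0\}$ coincide, so $f \odot g = g \odot f$. I expect the main obstacle to be purely bookkeeping about $\cZ$: one must be sure that $\cZ$ genuinely does satisfy $(\cZ g)(x,0) = x g(x,0)$ and genuinely preserves discrete analyticity (and is well-defined on the relevant class — polynomials, then expandable functions, where one also needs the Cauchy-type estimate \eqref{esti} to be respected so that $f(\cZ)g$ stays expandable). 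Once those properties of $\cZ$ are granted, the theorem is essentially a corollary of the uniqueness-of-extension results already proved, with no further hard analysis; the subtlety lies entirely in having set up $\cZ$ correctly beforehand.
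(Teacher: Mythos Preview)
Your proposal is correct and follows essentially the same approach as the paper: for $(1)$ you use $(\cZ g)(x,0)=x\,g(x,0)$ iterated, for $(2)$ you invoke the uniqueness of expandable (resp.\ polynomial) discrete analytic extensions, and $(3)$ follows from $(1)$ and $(2)$ combined with commutativity of scalar multiplication. The two properties of $\cZ$ you flag as prerequisites---$(\cZ g)(x,0)=x\,g(x,0)$ and preservation of discrete analyticity---are exactly what the paper records as Proposition~\ref{ok} (it appears in the \emph{following} section rather than between Definition~\ref{formal} and the theorem, but your identification of what is needed is accurate).
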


\begin{proof}\mbox{}\\
$(1)$ Setting $y=0$ in \eqref{ckdef}  and taking into account the
definition of $\cZ$ we have
\[
\begin{split}
(g\odot f)(x,0)&=c_0g(x,0)+c_1(\cZ
g)(x,0)+\dots+c_n(\cZ^ng)(x,0)\\
&=c_0g(x,0)+c_1xg(x,0)+\cdots +c_nx^ng(x,0)\\
&=f(x,0)g(x,0).
\end{split}
\]
$(2)$  For an expandable function the discrete Cauchy-Riemann
equation $\overline{D}f=0$ with prescribed initial values on the
horizontal positive axis has a unique solution (see Theorem \ref{p52}).\\

$(3)$ is then clear from $(1)$ and $(2)$.
\end{proof}

We now explain the name given to this product. Recall that the
classical Cauchy-Kovalevskaya theorem concerns uniqueness of
solutions of certain partial differential equations with given
initial conditions. See for instance \cite{MR93j:26013}. In
Clifford analysis, where the pointwise product of hyperholomorphic
functions need not be hyperholomorphic, this theorem was used by
F. Sommen in \cite{MR618518} (see also \cite{bds}) to define the
product of hyperholomorphic quaternionic-valued functions in
$\mathbb R^4$ by extending the pointwise product from an
hyperplane. In the present setting of expandable functions the
discrete Cauchy-Riemann equation $\overline{D}f=0$ with
prescribed initial values on the horizontal positive axis also
has a unique solution. This is why the pointwise product on the
horizontal positive axis can be extended to a unique expandable
function, which we call the C-K product.\\

If $p$ is a discrete analytic polynomial and $f$ is an expandable
function, $p\odot f$ is the expandable function determined by
\[
(p\odot f)(x,0)=p(x,0)f(x,0).
\]
However it is not true in general that the pointwise product of
the restrictions of two expandable functions, say $f$ and $g$, is
itself the restriction of an expandable function, as is
illustrated by the example
\[
f(x,y)=g(x,y)=e_{x,y}(t),
\]
where $e_{x,y}(t)$ is defined by \eqref{exy} and $|t|>$. Indeed,
\[
e_{x,0}(t)=(1+t)^x
\]
is the restriction of an expandable function whenever
$|t|<\sqrt{2}$.  On the other hand,
$(e_{x,0}(t))^2=e_{x,0}(2t+t^2)$ will not be the restriction of
an expandable function for $|t|>\sqrt{1+\sqrt{2}}-1$ which is
strictly smaller than $\sqrt{2}$.\\

We note that the C-K product for hyperholomorphic functions was
used in  \cite{asv-cras,MR2124899,MR2240272} to define and study
rational hyperholomorphic functions, and some related reproducing
Hilbert spaces.

\begin{proposition}
For $m,n\in\ZZ_+$ and $j\in\left\{0,\ldots, m+n\right\}$, set
\[
c^{m,n}_j=\frac{\delta^j\left(x^{[m]}x^{[n]}\right)}{j!}|_{x=0}.
\]
Then,
\[
\label{equalzeta}
\zeta_m\odot\zeta_n=\sum_{j=0}^{m+n}c_j^{m,n}\zeta_j.
\]
\end{proposition}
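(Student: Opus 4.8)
The plan is to unwind both sides of \eqref{equalzeta} by restricting to the horizontal axis $y=0$ and invoking the uniqueness part of the $C$-$K$ construction. By Definition \ref{formal} and the discussion following it, $\zeta_m\odot\zeta_n$ is the unique expandable (indeed polynomial, since $\zeta_m$ is a discrete analytic polynomial) function whose restriction to $\ZZ_+\times\{0\}$ is the pointwise product $\zeta_m(x,0)\zeta_n(x,0)=x^{[m]}x^{[n]}$. So it suffices to check that the right-hand side $\sum_{j=0}^{m+n}c_j^{m,n}\zeta_j$ is a discrete analytic polynomial whose restriction to $y=0$ equals $x^{[m]}x^{[n]}$, and then uniqueness (Theorem \ref{t1}, or equivalently Lemma \ref{l1}) forces equality.

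First I would observe that $\sum_j c_j^{m,n}\zeta_j$ is a finite linear combination of discrete analytic polynomials, hence a discrete analytic polynomial. Next I would compute its restriction to $y=0$: using $\zeta_j(x,0)\equiv x^{[j]}$ (the defining property of $\zeta_j$ from section \ref{sec5}), this restriction is $\sum_{j=0}^{m+n}c_j^{m,n}x^{[j]}$. The remaining point is the polynomial identity
\[
\label{polyid}
x^{[m]}x^{[n]}=\sum_{j=0}^{m+n}c_j^{m,n}x^{[j]},\qquad
c_j^{m,n}=\frac{\delta^j\bigl(x^{[m]}x^{[n]}\bigr)}{j!}\Big|_{x=0}.
\]
This is exactly the statement that $c_j^{m,n}$ are the Fourier coefficients (in the sense of \eqref{fou1}) of the polynomial $x\mapsto x^{[m]}x^{[n]}$: since $x^{[m]}x^{[n]}$ is a polynomial of degree $m+n$, Proposition \ref{p01} gives that its Fourier transform has support in $\{0,\ldots,m+n\}$ and that it is recovered as $\sum_{j}\widehat{(\,\cdot\,)}(j)\,x^{[j]}$ with $\widehat{(\,\cdot\,)}(j)=\delta^j(x^{[m]}x^{[n]})|_{x=0}/j!$. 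That is precisely \eqref{polyid} with the stated $c_j^{m,n}$.

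Putting these together: the right-hand side of \eqref{equalzeta} is a discrete analytic polynomial agreeing with $x^{[m]}x^{[n]}=\zeta_m(x,0)\zeta_n(x,0)$ on $\ZZ_+\times\{0\}$, hence (extending trivially, both sides being polynomials in $x$ that agree on $\ZZ_+$, so on $\ZZ$) agreeing with $\zeta_m\odot\zeta_n$ on $\ZZ\times\{0\}$. By Lemma \ref{l1} the difference is a discrete analytic polynomial vanishing on the horizontal axis, hence identically zero, which is \eqref{equalzeta}. There is no serious obstacle here; the only mild care needed is the bookkeeping that the $c_j^{m,n}$ defined by the $\delta$-derivative formula are genuinely the coefficients in the $x^{[j]}$-expansion of $x^{[m]}x^{[n]}$, i.e. the correct invocation of Proposition \ref{p01}/\ref{tay2}, and that $x^{[m]}x^{[n]}$ being a polynomial of degree $m+n$ justifies truncating the sum at $j=m+n$.
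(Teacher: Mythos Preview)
Your proof is correct and follows essentially the same approach as the paper's: the paper's proof simply says ``It suffices to note that \eqref{equalzeta} holds for $y=0$, thanks to Proposition \ref{tay2},'' and you have unpacked exactly this, verifying the restriction identity via the Fourier expansion and then invoking the uniqueness of discrete analytic polynomial extensions.
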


\begin{proof}
It suffices to note that \eqref{equalzeta} holds for $y=0$,
thanks to Proposition \ref{tay2}.
\end{proof}

Systems like \eqref{equalzeta} occur in the theory of discrete
hypergroups. See \cite{MR2606478}.

\section{Rational discrete analytic functions}
\label{sec7} As we already mentioned, the pointwise product of
two discrete analytic functions need not be discrete analytic. In
the sequel of the section we define a product on discrete
analytic functions when one of the terms is a polynomial, and
show that a rational function is a quotient of discrete analytic
polynomials with respect to this product. We first need the
counterpart of multiplication by the complex variable.

\begin{definition}
\label{defmult} The {\em multiplication operator} $\cZ$ on the
class of functions $f:\Omega\times\ZZ\longrightarrow \CC$ is given
by
$$(\cZ
f)(x,y)=xf(x,y)+iy\dfrac{f(x,y+1)+f(x,y-1)}{2}.$$
\end{definition}

\begin{proposition}\label{ok}
Let $f$ be a function from $\Omega\times \ZZ$ into $\mathbb C$.
Then\\
$(1)$
\[\label{xmult}(\cZ
f)(x,0)\equiv xf(x,0).
\]
Furthermore:\\
$(2)$ If $f$ is a polynomial, then so is $\cZ f.$\\
$(3)$ If $f$ is discrete analytic, then so is $\cZ f$.
\end{proposition}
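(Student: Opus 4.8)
The plan is to verify each of the three assertions in turn, since they build on one another. For part $(1)$, I would simply set $y=0$ in the formula of Definition \ref{defmult}; the term $iy\,\frac{f(x,y+1)+f(x,y-1)}{2}$ carries the factor $y$ and hence vanishes, leaving $(\cZ f)(x,0)=xf(x,0)$. This is immediate and requires no further argument.

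For part $(2)$, suppose $f:\ZZ^2\longrightarrow\CC$ is a polynomial. I would argue that $(\cZ f)(x,y)=xf(x,y)+iy\,\frac{f(x,y+1)+f(x,y-1)}{2}$ is again a polynomial expression in $x,y$: the map $y\mapsto f(x,y+1)$ and $y\mapsto f(x,y-1)$ take polynomials to polynomials (a shift substitution), so the right-hand side is a polynomial in $x$ and $y$. One then invokes the convention established after Theorem \ref{tautology} — a function on $\ZZ^2$ agreeing with a two-variable complex polynomial is itself called a polynomial — to conclude $\cZ f$ is a polynomial. Here I should be a little careful: I would phrase it using $\delta_x,\delta_y$ so that it is clear $\cZ f$ lies in the class handled by Proposition \ref{p11}, i.e. write $\frac{f(x,y+1)+f(x,y-1)}{2}=f(x,y)+\frac12(\delta_y f)(x,y)-\frac12 (\delta_y f)(x,y-1)$ or, more cleanly, just note directly that all the building blocks are polynomials in the defined sense.

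Part $(3)$ is the substantive one and the main obstacle: I must show that if $\bar\cD f=0$ then $\bar\cD(\cZ f)=0$, where $\bar\cD=(1-i)\delta_x+(1+i)\delta_y+\delta_x\delta_y$ as in \eqref{COP}. The approach is a direct computation of $\bar\cD(\cZ f)$. Writing $(\cZ f)(x,y)=x f(x,y)+iy\, g(x,y)$ with $g(x,y):=\frac{f(x,y+1)+f(x,y-1)}{2}$, I would apply $\delta_x$, $\delta_y$, and $\delta_x\delta_y$ to each summand using the Leibniz-type rules for difference operators, e.g. $\delta_x(xh)(x,y)=x(\delta_x h)(x,y)+h(x+1,y)$ and $\delta_y(yh)(x,y)=y(\delta_y h)(x,y)+h(x,y+1)$, together with $\delta_y g$ and the averaging structure of $g$. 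The terms with an explicit factor of $x$ (resp. $y$) should collect into $x(\bar\cD f)(x,y)$ (resp. $iy(\bar\cD g)(x,y)$), both of which vanish — for the second, one needs that $g$ is discrete analytic whenever $f$ is, which follows because the shift operators $y\mapsto y\pm1$ commute with $\delta_x,\delta_y$ and hence with $\bar\cD$, so $\bar\cD g = \frac12(\bar\cD f)(x,y+1)+\frac12(\bar\cD f)(x,y-1)=0$. The remaining ``boundary'' terms — those without a factor of $x$ or $y$ — must then cancel identically; this cancellation is exactly the point where the specific coefficients $iy$ and the symmetric average $\frac{f(x,y+1)+f(x,y-1)}{2}$ in Definition \ref{defmult} were engineered to work, and confirming this cancellation is the crux of the proof. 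I expect it to reduce, after collecting terms, to an identity among values of $f$ at a few neighboring lattice points that is a consequence of the discrete Cauchy-Riemann relation \eqref{DCR}; I would organize the bookkeeping so that this final identity is visibly $\bar\cD f=0$ evaluated at $(x,y)$ and at the shifted point $(x,y-1)$, which finishes the argument.
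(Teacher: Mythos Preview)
Your approach is correct and is essentially what the paper does: parts $(1)$ and $(2)$ are declared clear from the definition, and for $(3)$ the paper invokes the commutator identity $[\bar\cD,\cZ]=\bigl(\tfrac{1+i}{2}+\tfrac{i}{2}\delta_y\bigr)\bar\cD$ (identity \eqref{id3} of Theorem~\ref{tm34}), whose verification is exactly the direct Leibniz-rule computation you outline---the terms carrying $x$ and $iy$ assemble into $\cZ(\bar\cD f)$, and the residual terms form a multiple of $\bar\cD f$. One small correction: when you collect those residual terms they combine into $\tfrac12(\bar\cD f)(x,y)+\tfrac{i}{2}(\bar\cD f)(x,y+1)$, so the shifted point is $(x,y+1)$ rather than $(x,y-1)$.
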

\begin{proof}
The proofs of $(1)$ and $(2)$ are clear from the definition. The
proof of $(3)$ follows from the identity \eqref{id3} in Theorem
\ref{tm34}.
\end{proof}

\begin{proposition}\label{ok2}
Let $f:\ZZ_+\times\ZZ\longrightarrow \CC$ be expandable. Then so
is $\cZ f$. In particular,
\[
\label{hu5} \forall n\in\ZZ_+,\qquad (\cZ\zeta_n)(x,y)\equiv
\zeta_{n+1}(x,y)+n\zeta_n(x,y).
\]
\end{proposition}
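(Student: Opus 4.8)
The plan is to reduce everything to the known behavior of $\zeta_n$ on the positive horizontal axis, exactly as in the earlier propositions. First I would establish the identity \eqref{hu5} directly. By Proposition \ref{ok}, part $(1)$, we have $(\cZ\zeta_n)(x,0)\equiv x\zeta_n(x,0)\equiv x\cdot x^{[n]}$. Now the elementary identity $x\cdot x^{[n]} = x^{[n+1]} + n x^{[n]}$ (which follows immediately from $x^{[n+1]} = x^{[n]}(x-n)$) shows that $(\cZ\zeta_n)(x,0)\equiv x^{[n+1]} + n x^{[n]} \equiv \zeta_{n+1}(x,0) + n\zeta_n(x,0)$. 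By Proposition \ref{ok}, part $(3)$, the function $\cZ\zeta_n$ is discrete analytic, and it is a polynomial by part $(2)$; the right-hand side is also a discrete analytic polynomial. Since both sides agree on $\ZZ$ when $y=0$, the uniqueness statement of Theorem \ref{t1} forces \eqref{hu5}.

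Next I would prove that $\cZ f$ is expandable whenever $f$ is. Write $f(x,y)=\sum_{n\in\ZZ_+}\hat{f_0}(n)\zeta_n(x,y)$ with $\limsup_{n\to\infty}(|\hat{f_0}(n)|n!)^{1/n}=:\rho<\sqrt 2$. The natural approach is to apply $\cZ$ term by term — which is justified by the absolute convergence of the defining series together with the local-finiteness estimates of Theorem \ref{lim2} governing $\zeta_n$ — and then use \eqref{hu5} to get
\[
(\cZ f)(x,y) = \sum_{n\in\ZZ_+}\hat{f_0}(n)\big(\zeta_{n+1}(x,y)+n\zeta_n(x,y)\big)
= \sum_{n\in\ZZ_+}\big(\hat{f_0}(n-1) + n\,\hat{f_0}(n)\big)\zeta_n(x,y),
\]
with the convention $\hat{f_0}(-1)=0$. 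Denote the new coefficient sequence by $g(n):=\hat{f_0}(n-1)+n\hat{f_0}(n)$. A routine estimate gives
\[
\limsup_{n\to\infty}\big(|g(n)|\,n!\big)^{1/n}
\le \max\Big(\limsup_{n\to\infty}\big(|\hat{f_0}(n-1)|\,n!\big)^{1/n},\ \limsup_{n\to\infty}\big(n\,|\hat{f_0}(n)|\,n!\big)^{1/n}\Big) = \rho < \sqrt 2,
\]
since $(n!)^{1/n}/((n-1)!)^{1/n}\to 1$ and $n^{1/n}\to 1$. Thus the series $\sum g(n)\zeta_n(x,y)$ converges absolutely on $\ZZ_+\times\ZZ$ (by Theorem \ref{lim2}), and by Theorem \ref{p52} it defines a discrete analytic function whose restriction to $y=0$ has Fourier transform $g$. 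Setting $y=0$ and comparing with $(\cZ f)(x,0)\equiv x f_0(x)$ identifies this function with $\cZ f$, so $\cZ f$ is expandable, with $\widehat{(\cZ f)_0}(n) = g(n)$ satisfying \eqref{esti}.

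The main obstacle I anticipate is the justification of the term-by-term application of $\cZ$: the operator $\cZ$ involves not just multiplication by $x$ but the average $\tfrac{iy}{2}(f(x,y+1)+f(x,y-1))$, so one must argue that the series $\sum_n\hat{f_0}(n)\zeta_n(x,y\pm 1)$ converges absolutely and may be recombined — for this one uses that the estimate \eqref{esti} is uniform in $y$ in the sense that Theorem \ref{lim2} bounds the growth of $\zeta_n(x,y)$ for every fixed $(x,y)$. Once this interchange is licensed, the rest is the bookkeeping above together with an appeal to Theorem \ref{t1} for the polynomial identity \eqref{hu5}. One could alternatively prove \eqref{hu5} first and then deduce expandability as a formal consequence; I would present it in that order since \eqref{hu5} is the cleaner fact and is what gets reused.
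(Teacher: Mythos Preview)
Your proposal is correct and follows essentially the same route as the paper: first establish \eqref{hu5} by computing $(\cZ\zeta_n)(x,0)=x\cdot x^{[n]}=x^{[n+1]}+nx^{[n]}$ and invoking the uniqueness in Theorem~\ref{t1}, then apply $\cZ$ term by term to the expansion of $f$, rearrange via \eqref{hu5}, and check the limsup estimate for the new coefficients $\hat{f_0}(n-1)+n\hat{f_0}(n)$. You are in fact more explicit than the paper about justifying the term-by-term application of $\cZ$, which the paper handles with the single phrase ``where the convergence is absolute.''
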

\begin{proof}
By Proposition \ref{ok}, for every $n\in\ZZ_+$,
$(\cZ\zeta_n)(x,y)$ is a discrete analytic polynomial. In view of
\eqref{xmult},
$$(\cZ\zeta_n)(x,0)= x\cdot x^{[n]}=x^{[n+1]}+n
x^{[n]},$$ hence formula \eqref{hu5} follows from Theorem
\ref{t1}.

Let $f_0:\ZZ_+\longrightarrow\CC$ be defined by \eqref{rest}, then
$$(\cZ f)(x,0)=
xf_0(x)=\sum_{n\in\ZZ_+}(n\hat{f_0}(n)+\hat{f_0}(n-1))x^{[n]},$$
where $\hat{f_0}(-1):=0.$ Since
$$\limsup_{n\rightarrow\infty}(|\hat{f_0}(n)|n!)^{1/n}<\sqrt{2},$$
$$\limsup_{n\rightarrow\infty}(|n\hat{f_0}(n)+\hat{f_0}(n-1)|n!)^{1/n}<\sqrt{2}.$$
Finally, since
$$f(x,y)=\sum_{n\in\ZZ_+}\hat{f_0}(n)\zeta_n(x,y),$$
where the convergence is absolute,
$$(\cZ f)(x,y)=\sum_{n\in\ZZ_+}\hat{f_0}(n)(\cZ
\zeta_n)(x,y)=\sum_{n\in\ZZ_+}(n\hat{f_0}(n)+\hat{f_0}(n-1))\zeta_n(x,y).$$
\end{proof}

From the preceeding proof we note that
\[
\zeta_1\odot \zeta_n=\cZ \zeta_n=n\zeta_n+\zeta_{n+1}.
\]

\begin{theorem}
\label{tm34}
The operators $\delta_x,\delta_y, \mathcal Z$ and
$\overline{D}$ generate a Lie algebra of linear operators on the
space of all functions from $\ZZ^2$ into $\mathbb C$. The Lie
bracket is $[A,B]=AB-BA$ and the relations on the generators are
\newcommand{\de}{\delta}
\begin{align}
\label{id1}
[\delta_x,\cZ]&=1+\delta_x,\\
[\de_y,\cZ]&= i(1+\de_y+\de_y^2),\\
\label{id3}
[\overline{D},\cZ]&=\left(\frac{1+i}{2}+\frac{i}{2}\de_y\right)\overline{D},\\
[\overline{D},\de_x]&=[\overline{D},\de_y]=[\de_x,\de_y]=0.
\label{id2}
\end{align}
\end{theorem}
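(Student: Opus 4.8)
The plan is to verify each of the stated commutation relations directly by evaluating both sides on an arbitrary function $f:\ZZ^2\longrightarrow\CC$ at an arbitrary point $(x,y)$, and then to observe that these relations close up to a finite-dimensional span, so that the operators generate a genuine Lie algebra. The first task is to compute $[\delta_x,\cZ]$. Since $\cZ$ acts by $(\cZ f)(x,y)=xf(x,y)+iy\,\tfrac{f(x,y+1)+f(x,y-1)}{2}$, and $\delta_x$ shifts only the first variable, the term $iy\,\tfrac{\cdots}{2}$ commutes with $\delta_x$, while for the $xf$ part one gets $(\delta_x(xf))(x,y)=(x+1)f(x+1,y)-xf(x,y)=x(\delta_xf)(x,y)+f(x+1,y)$, whence $\delta_x\cZ-\cZ\delta_x=I+\delta_x$ applied to $f$. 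This is the model computation: the pattern ``$\delta$ hits the polynomial coefficient and produces a lower-order term plus a shift'' recurs throughout.

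Next I would handle $[\delta_y,\cZ]$. Here the roles are reversed: $\delta_y$ commutes with the $xf$ part, and one must carefully expand $\delta_y$ applied to $iy\,\tfrac{f(x,y+1)+f(x,y-1)}{2}$. Writing $g(x,y)=\tfrac{f(x,y+1)+f(x,y-1)}{2}$, one has $(\delta_y(iyg))(x,y)=i(y+1)g(x,y+1)-iy\,g(x,y)=iy(\delta_yg)(x,y)+ig(x,y+1)$, and then one must re-express $(\delta_yg)$ and $g(x,y+1)$ in terms of $f$ and $\delta_y f$ and $\delta_y^2 f$; collecting the averaged second differences should produce exactly $i(I+\delta_y+\delta_y^2)$. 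I expect this to be the most bookkeeping-intensive of the four relations, because the symmetric average and the multiplication by $y$ interact, and one has to be patient about indices. The relation $[\overline{D},\cZ]=\bigl(\tfrac{1+i}{2}+\tfrac{i}{2}\delta_y\bigr)\overline{D}$ then follows by linearity from the first two: since $\overline{D}=(1-i)\delta_x+(1+i)\delta_y+\delta_x\delta_y$, one expands $[\overline{D},\cZ]$ using $[\delta_x,\cZ]$, $[\delta_y,\cZ]$, the Leibniz rule for the bracket applied to the product $\delta_x\delta_y$, and the fact (from \eqref{comm}) that $\delta_x$ and $\delta_y$ commute; then one must algebraically recognize the resulting operator as a left multiple of $\overline{D}$, which is where the specific coefficients $1-i$, $1+i$ conspire. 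Finally, \eqref{id2} — that $\overline{D}$, $\delta_x$, $\delta_y$ pairwise commute — is immediate from \eqref{comm} and the definition of $\overline{D}$ as a polynomial in the commuting operators $\delta_x,\delta_y$.

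Once the four relations are established, the Lie-algebra claim is formal: the linear span of $I,\delta_x,\delta_y,\delta_x\delta_y,\delta_y^2,\overline{D},\cZ$ (together with whatever further brackets $[\delta_x^k,\cZ]$, $[\delta_y^k,\cZ]$ force, which by induction stay polynomial in $\delta_y$ of bounded degree at each stage — though for the statement as written it suffices that the generators' mutual brackets lie in a fixed finite-dimensional space of operators) is closed under the bracket, hence is a Lie algebra, and the operators in question sit inside it. The main obstacle I anticipate is purely computational stamina in the $[\delta_y,\cZ]$ calculation and in matching coefficients for \eqref{id3}; there is no conceptual difficulty, but one must be scrupulous with the shifts $y\mapsto y\pm 1$ and with the identity $f(x,y+1)=f(x,y)+(\delta_y f)(x,y)$ used to convert shifts into difference operators. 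A secondary point worth stating explicitly is that $\cZ$ is well-defined on \emph{all} functions $\ZZ^2\to\CC$ (Definition \ref{defmult} only needed $\Omega\times\ZZ$, and $\ZZ^2$ is of this form), so no domain subtleties arise.
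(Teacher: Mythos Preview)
Your approach is correct and matches the paper's, whose entire proof reads: ``The identities \eqref{id1}--\eqref{id2} can be verified by the calculations in the proofs of the two preceding propositions.'' Two side notes worth flagging as you carry this out: your $[\delta_y,\cZ]$ computation will in fact yield $\tfrac{i}{2}\bigl(f(x,y+2)+f(x,y)\bigr)=i\bigl(1+\delta_y+\tfrac{1}{2}\delta_y^2\bigr)f$ rather than the stated $i(1+\delta_y+\delta_y^2)$---this appears to be a typo in the paper, and with the corrected coefficient the derivation of \eqref{id3} via Leibniz goes through exactly as you outline; and the finite span you write down is not actually bracket-closed (for instance $[\delta_y^2,\cZ]$ produces a $\delta_y^3$ term), but this is harmless since any collection of linear operators trivially generates a Lie subalgebra of $\mathrm{End}(\CC^{\ZZ^2})$, so the theorem's content lies entirely in the four displayed relations.
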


\begin{proof}
The identities \eqref{id1}-\eqref{id2} can be verified by the
calculations in the proofs of the two preceding propositions.
\end{proof}

\begin{definition}
A function $f:\ZZ_+\times\ZZ\longrightarrow\CC$ is said to be a
{\em rational discrete analytic} function if $f(x,y)$ is
expandable and $f(x,0)$ is rational.
\end{definition}

\begin{theorem}
\label{t2} An expandable function
$f:\ZZ_+\times\ZZ\longrightarrow\CC$ is rational if and only if
it is a C-K quotient of discrete analytic polynomials function
that is, if and only if there exist discrete analytic polynomials
$p(x,y)$ and $q(x,y)$ such that
$$\forall x\in\ZZ_+,\qquad q(x,0)\not=0$$
and
$$(q\odot f)(x,y)\equiv p(x,y).$$
\end{theorem}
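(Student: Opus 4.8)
The plan is to reduce the statement on $\ZZ_+\times\ZZ$ to a statement about restrictions on the positive horizontal axis $\ZZ_+$, where the notion of rationality is the classical one and where the C-K product becomes the ordinary pointwise product. The key bridge is the pair of facts established earlier: by part~$(1)$ of the theorem on the C-K product, $(q\odot f)(x,0)=q(x,0)f(x,0)$, and by Corollary~\ref{c6.3} together with Theorem~\ref{p52}, an expandable function is uniquely determined by its restriction to $\ZZ_+$, and a rational $f_0:\ZZ_+\to\CC$ has a (unique) expandable extension. Thus the identity $(q\odot f)(x,y)\equiv p(x,y)$ on $\ZZ_+\times\ZZ$ is equivalent, via uniqueness of the discrete analytic extension, to the identity $q(x,0)f(x,0)=p(x,0)$ on $\ZZ_+$.

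For the ``only if'' direction: suppose $f$ is expandable and $f_0(x)=f(x,0)$ is rational. Write $f_0=p_0/q_0$ with $p_0,q_0$ polynomials on $\ZZ_+$ and $q_0$ nonvanishing on $\ZZ_+$. By Theorem~\ref{t1} there are unique discrete analytic polynomials $p(x,y)$ and $q(x,y)$ with $p(x,0)\equiv p_0(x)$ and $q(x,0)\equiv q_0(x)$; note $q(x,0)=q_0(x)\neq0$ on $\ZZ_+$ as required. Now $q\odot f$ is well defined (the C-K product with a polynomial is always defined, by Definition~\ref{formal}), it is expandable, and $(q\odot f)(x,0)=q_0(x)f_0(x)=p_0(x)=p(x,0)$. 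Since both $q\odot f$ and $p$ are expandable (a discrete analytic polynomial is expandable) and agree on $\ZZ_+$, the uniqueness in Theorem~\ref{p52}/Corollary~\ref{c6.3} forces $(q\odot f)(x,y)\equiv p(x,y)$ on $\ZZ_+\times\ZZ$.

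For the ``if'' direction: assume there exist discrete analytic polynomials $p,q$ with $q(x,0)\neq0$ on $\ZZ_+$ and $q\odot f\equiv p$. Restricting to $y=0$ and using part~$(1)$ of the C-K product theorem gives $q(x,0)f(x,0)\equiv p(x,0)$ on $\ZZ_+$, hence $f(x,0)=p(x,0)/q(x,0)$ is a rational function on $\ZZ_+$ in the sense of Definition~\ref{defrat1}. Since $f$ is expandable by hypothesis, this says precisely that $f$ is rational discrete analytic.

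The main obstacle, and the point that needs to be handled with care, is the uniqueness argument in the ``only if'' direction: one must be sure that $q\odot f$ actually is expandable so that Theorem~\ref{p52} applies to it. This follows from Proposition~\ref{ok2} (each $\cZ^k f$ is expandable when $f$ is) together with the fact that a finite linear combination of expandable functions is expandable, which in turn rests on the estimate \eqref{esti} being stable under the finitely many shifts $n\mapsto n-1$ and multiplications by $n$ appearing when one applies powers of $\cZ$ — exactly the computation carried out in the proof of Proposition~\ref{ok2}. Once expandability of $q\odot f$ is in hand, the equality with $p$ on $\ZZ_+\times\ZZ$ is immediate from agreement on $\ZZ_+$ and uniqueness of the discrete analytic extension.
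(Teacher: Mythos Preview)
Your proof is correct and follows essentially the same route as the paper: reduce to the horizontal axis via $(q\odot f)(x,0)=q(x,0)f(x,0)$, extend $p_0,q_0$ to discrete analytic polynomials by Theorem~\ref{t1}, and conclude by uniqueness of the expandable extension. Your explicit justification that $q\odot f$ is expandable (via Proposition~\ref{ok2} applied to each $\cZ^k f$) is a welcome addition that the paper leaves implicit.
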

\begin{proof} Suppose first that $f$ is rational, and let $f_0(x)$
denote the restriction of $f$ to the horizontal positive axis. By
definition there exists two polynomials $p_0(x)$ and $q_0(x)$
such that $q_0(x)\not=0$ (on $\ZZ_+$) and
\[
q_0(x)f_0(x)=p_0(x),\quad x\in\ZZ_+.
\]
Let $p(x,y)$ and $q(x,y)$ denote the discrete analytic
polynomials extending $p_0(x)$ and $q_0(x)$ respectively. Then
both $p$ and $q\odot f$ are expandable functions, which coincide
on $\ZZ_+$, and therefore everywhere.\\

Conversely, let $p(x,y)$ and $q(x,y)$ be the discrete analytic
polynomials such that
$$\forall x\in\ZZ_+,\qquad q(x,0)\not=0$$and
$$(q\odot f)(x,y)\equiv p(x,y).$$
Setting $y=0$ leads to
$$\forall x\in\ZZ_+,\quad q(x,0)f(x,0)=p(x,0),$$
which ends the proof.
\end{proof}

\begin{theorem}
Let $p(x,y)$ and $q(x,y)$ be the discrete analytic polynomials
such that
$$\forall x\in\ZZ_+,\qquad q(x,0)\not=0$$
Then there is a unique expandable rational function $f$ such that
$$(q\odot f)(x,y)\equiv p(x,y).$$
\end{theorem}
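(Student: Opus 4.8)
The plan is to reduce this uniqueness-and-existence statement to the one-variable theory already developed on $\ZZ_+$. First I would observe that $\cZ$ acts on the values along the horizontal axis simply as multiplication by $x$: by part $(1)$ of Proposition \ref{ok}, $(\cZ f)(x,0) = xf(x,0)$, and hence for any discrete analytic polynomial $q$ with $q(x,0) = \sum c_j x^j$ we get $(q\odot f)(x,0) = q(x,0) f(x,0)$. So the functional equation $(q\odot f)(x,y)\equiv p(x,y)$ restricted to $y=0$ becomes the scalar equation $q(x,0) f(x,0) = p(x,0)$ on $\ZZ_+$. Since $q(x,0)\not=0$ for all $x\in\ZZ_+$, this determines $f(x,0)$ uniquely as the rational function $f_0(x) := p(x,0)/q(x,0)$, which is rational in the sense of Definition \ref{defrat1}.

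Next I would invoke the extension machinery: by Corollary \ref{c6.3} (whose content rests on Theorem \ref{lim1}), since $f_0$ is rational there is a unique expandable function $f:\ZZ_+\times\ZZ\longrightarrow\CC$ with $f(x,0)\equiv f_0(x)$, and this $f$ is rational discrete analytic by definition. This gives a candidate; it remains to check it actually satisfies $(q\odot f)(x,y)\equiv p(x,y)$ on all of $\ZZ_+\times\ZZ$, not merely on the axis. For this I would note that $q\odot f$ is expandable: by Proposition \ref{ok2} (applied $\deg q$ times, or rather via the fact that $\cZ$ preserves expandability together with formula \eqref{ckdef}), the C-K product of a discrete analytic polynomial with an expandable function is expandable. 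Both $q\odot f$ and $p$ are therefore expandable functions, and by the computation above they agree when $y=0$; since an expandable function is determined by its values on the positive horizontal axis (the remark following Definition \ref{5.1}, or directly Theorem \ref{p52}), we conclude $q\odot f\equiv p$ everywhere.

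Finally, for uniqueness: if $\tilde f$ is any expandable rational function with $(q\odot\tilde f)(x,y)\equiv p(x,y)$, then setting $y=0$ gives $q(x,0)\tilde f(x,0) = p(x,0)$, hence $\tilde f(x,0) = f_0(x) = f(x,0)$ on $\ZZ_+$; since both $\tilde f$ and $f$ are expandable and agree on the positive horizontal axis, they are equal. (Alternatively, uniqueness is already the content of Corollary \ref{c6.3}.) The main obstacle, such as it is, is bookkeeping rather than a genuine difficulty: one must be careful that $q\odot f$ is well-defined as an expandable function — that is, that the Fourier transform of $q(x,0)f(x,0)$ satisfies the Cauchy estimate \eqref{esti}. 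This follows because multiplication by the polynomial $q(x,0)$ preserves the condition $\limsup(|\widehat{g}(n)|n!)^{1/n}<\sqrt 2$, exactly as shown in the proof of Proposition \ref{ok2} for the single factor $x$; iterating handles a general polynomial $q(x,0)$.
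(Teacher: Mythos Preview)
Your proof is correct and follows essentially the same route as the paper: construct $f$ as the unique expandable extension of $f_0(x)=p(x,0)/q(x,0)$ via Corollary~\ref{c6.3}, show $q\odot f$ is expandable via Proposition~\ref{ok2}, and then identify $q\odot f$ with $p$ by comparing restrictions to $y=0$. The only cosmetic difference is in the last step: you appeal to the general fact that expandable functions are determined by their values on $\ZZ_+$, whereas the paper first observes that $q\odot f$ is actually a discrete analytic \emph{polynomial} (since its restriction $p(x,0)$ has finitely supported Fourier transform) and then invokes Theorem~\ref{t1}; both arguments are valid and your formulation is arguably cleaner and more explicit about existence and uniqueness than the paper's.
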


\begin{proof}
Denote
$$g(x,y)=(q\odot f)(x,y).$$
According to Proposition \ref{ok2}, the function
$g:\ZZ_+\times\ZZ\longrightarrow\CC$ is expandable, and therefore
can be written as
$$g(x,y)=\sum_{n\in\ZZ_+}\hat{g_0}(n)\zeta_n(x,y),$$
where $\hat{g_0}$ is the Fourier transform of its restriction
$$g_0(x)=g(x,0).$$

In particular, by Theorem \ref{p52}, $g$ is discrete analytic. In
view of \eqref{xmult},
$$g_0(x)=q(x,0)f(x,0)=p(x,0),$$
hence, by Proposition \ref{p01},  $\hat{g_0}$ has finite support
and $g$ is a discrete analytic polynomial. In view of Theorem
\ref{t1},
$$g(x,y)\equiv p(x,y).$$
\end{proof}

\section{The $C^*$-algebra associated to expandable discrete
analytic functions}.
\label{sec8}
We denote by $\cH_{DA}$ the
reproducing kernel Hilbert space with reproducing kernel
\[
\label{rkda} K((x_1,y_1),(x_2,y_2))=
\sum_{n=0}^\infty\dfrac{\zeta_n(x_1,y_1)\zeta_n(x_2,y_2)^*}{(n!)^2},
\]
and let $e_n:=\frac{1}{n!}\zeta_n$ be the corresponding ONB in
$\cH_{DA}$. Then,

\begin{theorem}
\begin{equation}
\label{copy:shift}
\delta_xe_1=0\quad{and}\quad \delta_xe_n=e_{n-1},\quad n>1,
\end{equation}
i.e., $\delta_x$ is a copy of the backwards shift.
\end{theorem}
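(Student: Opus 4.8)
The plan is to deduce this immediately from the identity \eqref{hu3}, which states that $(\delta_x \zeta_n)(x,y) \equiv n\zeta_{n-1}(x,y)$ for $n \geq 1$, together with the convention $\zeta_0 \equiv 1$ and $(\delta_x \zeta_0) \equiv 0$. Since $e_n = \frac{1}{n!}\zeta_n$, applying $\delta_x$ gives $\delta_x e_n = \frac{1}{n!}\delta_x \zeta_n = \frac{n}{n!}\zeta_{n-1} = \frac{1}{(n-1)!}\zeta_{n-1} = e_{n-1}$ for $n \geq 1$. In particular, for $n = 1$ this reads $\delta_x e_1 = e_0 = \zeta_0 \equiv 1$, which is a constant function; and I would note that the constant function $1$ is \emph{not} an element of $\cH_{DA}$, since $\zeta_0(x,0) \equiv 1 = x^{[0]}$ has Fourier transform $\widehat{f_0}(n) = \delta_{n,0}$ and one checks directly from \eqref{rkda} that $\|\zeta_0\|^2_{\cH_{DA}}$ would need to be interpreted via the ONB $\{e_n\}$ — but $e_0$ is the norm-one vector corresponding to $\zeta_0$, so in fact $\zeta_0 \in \cH_{DA}$.

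Here the subtlety arises: the statement asserts $\delta_x e_1 = 0$, not $\delta_x e_1 = e_0$. So the operator $\delta_x$ appearing in the theorem is \emph{not} the naive difference operator on all functions, but its compression to $\cH_{DA}$, i.e. $P_{\cH_{DA}} \delta_x$ restricted to $\cH_{DA}$, or else $\delta_x$ is understood on the closed subspace spanned by $\{e_n : n \geq 1\}$. The cleanest reading — and the one I would adopt — is that we regard $\delta_x$ as the densely defined operator on $\cH_{DA}$ determined by its action on the ONB, and the claim $\delta_x e_1 = 0$ reflects that $\delta_x \zeta_1 = 1 \cdot \zeta_0$, combined with the fact that in the relevant copy of $\ell^2$ the vector $e_0$ is being sent to $0$ because the backward shift annihilates the lowest basis vector. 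In other words: the theorem is simply recording that, under the unitary $\cH_{DA} \cong \ell^2(\ZZ_+)$, $e_n \mapsto \mathbf{e}_n$, the operator $\delta_x$ acts as $\delta_x e_n = e_{n-1}$ for $n \geq 1$ with $\delta_x e_0 = 0$ — i.e. it \emph{is} the backward (unilateral) shift $S^*$. To make \eqref{copy:shift} literally correct as stated (which starts the indexing at $n=1$), one reads it as: the action on $e_1, e_2, \ldots$ is $e_1 \mapsto 0$, $e_n \mapsto e_{n-1}$, which matches $S^*$ on the span of $\{e_1, e_2, \dots\}$ after identifying that span with all of $\ell^2(\ZZ_+)$ via $e_n \leftrightarrow \mathbf{e}_{n-1}$.

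So the proof I would write is: invoke \eqref{hu3} to get $\delta_x \zeta_n = n \zeta_{n-1}$, divide by $n!$ to obtain $\delta_x e_n = e_{n-1}$ for all $n \geq 1$ (with $e_0 = \zeta_0 \equiv 1$), observe that $\delta_x e_0 = \delta_x 1 = 0$, and conclude that with respect to the ONB $\{e_n\}_{n \in \ZZ_+}$ the operator $\delta_x$ is the backward shift $S^*$ (sending $e_n \mapsto e_{n-1}$, $e_0 \mapsto 0$); the displayed formula \eqref{copy:shift} is this statement written starting from $n=1$. The one genuine point requiring care — the main obstacle — is exactly reconciling the indexing: confirming that $\zeta_0 \equiv 1$ lies in $\cH_{DA}$ (it does, as $e_0$), that $\delta_x$ kills it, and hence that the phrase ``copy of the backwards shift'' is justified; everything else is the one-line computation from \eqref{hu3}.
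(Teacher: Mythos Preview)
Your core argument is correct and lands on the same identity the paper uses, namely $\delta_x\zeta_n = n\zeta_{n-1}$, after which dividing by $n!$ gives $\delta_x e_n = e_{n-1}$. The only difference in route is that you invoke \eqref{hu3} directly, whereas the paper re-derives that identity on the spot via the generating function: it computes $\delta_x e_{x,y}(z) = z\,e_{x,y}(z)$ from \eqref{exy}, inserts the expansion \eqref{exyseries}, and reads off the relation coefficient by coefficient. Your approach is more economical since \eqref{hu3} is already available; the paper's approach has the minor advantage of being self-contained within the reproducing-kernel section.

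Your extended discussion of the indexing is well-founded: as you observed, $\delta_x e_1 = e_0 = \zeta_0 \equiv 1$, not $0$, and $e_0$ is a genuine element of $\cH_{DA}$. The paper's displayed statement and its proof both appear to have an off-by-one slip (it should read $\delta_x e_0 = 0$ and $\delta_x e_n = e_{n-1}$ for $n\ge 1$); the generating-function computation in the paper actually yields $\delta_x\zeta_0 = 0$ at the constant term, not $\delta_x\zeta_1 = 0$. So your instinct that the intended assertion is simply ``$\delta_x$ acts as the backward shift on the ONB $\{e_n\}_{n\ge 0}$'' is the correct reading, and no compression or re-indexing is really needed---just a corrected index in the statement.
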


\begin{proof}
Using Proposition \ref{prop6.1}, we get
\begin{equation}
\label{eq1234}
\delta_xe_{x,y}(z)=e_{x+1,y}(z)-e_{x,y}(z)=ze_{x,y}(z).
\end{equation}
Substituting the expression
$e_{x,y}(z)=\sum_{n\in\ZZ_+}\frac{z^n}{n!}\zeta_n(x,y)$  into
\eqref{eq1234}, we get $\delta_x\zeta_1=0$ and
$\delta_x\zeta_n=n\zeta_{n-1}$ if $n>1$. The result
\eqref{copy:shift} follows.
\end{proof}

\begin{proposition}
In $\cH_{DA}$ we have
\[
\begin{split}
\label{deltay}
\delta_y&=\delta_x\left(I-\frac{i-1}{2}\delta_x\right)^{-1}\\
&=\sum_{n=0}^\infty\left(\frac{i-1}{2}\right)^n\delta_x^{n+1},
\end{split}
\]
where the convergence of the above series is in the operator norm
\end{proposition}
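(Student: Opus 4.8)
The plan is to exploit the discrete Cauchy--Riemann equation from Theorem~\ref{maindef2} to express $\delta_y$ algebraically in terms of $\delta_x$, and then to check that the resulting operator identity makes sense on $\cH_{DA}$ because $\delta_x$ acts there as the (bounded, in fact nilpotent-on-finite-vectors) backward shift described in \eqref{copy:shift}. Rewriting $\bar\cD=(1-i)\delta_x+(1+i)\delta_y+\delta_x\delta_y=0$ on discrete analytic functions, we solve for $\delta_y$: grouping the $\delta_y$ terms gives $\bigl((1+i)I+\delta_x\bigr)\delta_y=-(1-i)\delta_x$, so formally
\[
\delta_y=-(1-i)\bigl((1+i)I+\delta_x\bigr)^{-1}\delta_x .
\]
Factoring $(1+i)$ out of the bracket and using $\tfrac{1-i}{1+i}=-i$ one rewrites this as $\delta_y=i\bigl(I+\tfrac{1}{1+i}\delta_x\bigr)^{-1}\delta_x$; since $\tfrac{1}{1+i}=\tfrac{1-i}{2}$ and $i\cdot\tfrac{1-i}{2}=\tfrac{1+i}{2}$, a little rearrangement (multiply numerator and denominator inside by the same scalar) should bring it to the stated form $\delta_y=\delta_x\bigl(I-\tfrac{i-1}{2}\delta_x\bigr)^{-1}$. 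First I would carry out this scalar bookkeeping carefully, since the sign on $\tfrac{i-1}{2}$ versus $\tfrac{1-i}{2}$ is exactly what must match the statement.

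Next I would justify that the inverse exists as a bounded operator on $\cH_{DA}$ and that the Neumann series converges in operator norm. By \eqref{copy:shift}, $\delta_x$ is unitarily equivalent to the backward shift on $\ell^2$ with respect to the ONB $\{e_n\}$, hence $\|\delta_x\|=1$. Then $\|\tfrac{i-1}{2}\delta_x\|=\tfrac{1}{\sqrt2}<1$, so $I-\tfrac{i-1}{2}\delta_x$ is invertible with $\bigl(I-\tfrac{i-1}{2}\delta_x\bigr)^{-1}=\sum_{n=0}^\infty\bigl(\tfrac{i-1}{2}\bigr)^n\delta_x^n$, the series converging in operator norm because $\sum \bigl(\tfrac{1}{\sqrt2}\bigr)^n<\infty$. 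Multiplying on the left (or right --- these commute) by $\delta_x$ gives the second displayed line, $\delta_y=\sum_{n=0}^\infty\bigl(\tfrac{i-1}{2}\bigr)^n\delta_x^{n+1}$, with the same norm convergence.

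The remaining point is to verify that the operator defined by this series really is $\delta_y$ on $\cH_{DA}$, i.e.\ that it agrees with the genuine difference operator $\delta_y$ on the dense span of the $e_n$. Here I would test both sides against the generating function: by Proposition~\ref{prop6.1}, $e_{x,y}(z)=\sum_n \tfrac{z^n}{n!}\zeta_n(x,y)$ packages the action of the $e_n$, and $\delta_y e_{x,y}(z)=e_{x,y+1}(z)-e_{x,y}(z)=\bigl(\tfrac{1+i+iz}{1+i+z}-1\bigr)e_{x,y}(z)=\tfrac{(i-1)z}{1+i+z}\,e_{x,y}(z)$, while $\delta_x e_{x,y}(z)=z\,e_{x,y}(z)$ by \eqref{eq1234}. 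Thus $\delta_y$ corresponds to multiplication by $\tfrac{(i-1)z}{1+i+z}$ and $\delta_x$ to multiplication by $z$, and one simply checks the scalar identity $\tfrac{(i-1)z}{1+i+z}=z\bigl(1-\tfrac{i-1}{2}z\bigr)^{-1}$, equivalently $1+i+z=2(1-\tfrac{i-1}{2}z)^{-1}\cdot\tfrac{1}{1}$\,---\,cleared of fractions this is $1+i+z=\tfrac{2}{1-\frac{i-1}{2}z}$, i.e.\ $(1+i+z)\bigl(1-\tfrac{i-1}{2}z\bigr)=2$, which expands to $2+( -\tfrac{(i-1)(1+i)}{2}+1)z-\tfrac{i-1}{2}z^2$; since $(i-1)(1+i)=i+i^2-1-i=-2$, the coefficient of $z$ is $1+1=2\neq 0$, so this scalar identity does \emph{not} hold on the nose and I must instead read the operator identity on $\cH_{DA}$ rather than on generating functions --- meaning the honest verification is the basis computation: applying the series to $e_n$ and matching with the known action of $\delta_y$ on $e_n$ (obtainable from $\delta_y\zeta_n$ via $\bar\cD\zeta_n=0$ and \eqref{copy:shift}). \textbf{The main obstacle} is precisely this last reconciliation: one must confirm that solving the first-order relation $\bar\cD f=0$ for $\delta_y$ is legitimate on all of $\cH_{DA}$ (not merely on functions where $(1+i)I+\delta_x$ is invertible in some ambient sense), which is guaranteed here only because on $\cH_{DA}$ the operator $\delta_x$ is the backward shift with norm $1$, so the Neumann series genuinely converges; I would present the basis-vector check on $e_n$ as the rigorous core and use the generating-function heuristic only as motivation.
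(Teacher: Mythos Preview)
Your approach is exactly the paper's: set $\bar\cD=0$ on $\cH_{DA}$, isolate $\delta_y$ in terms of $\delta_x$, and justify the Neumann series by $\bigl\|\tfrac{i-1}{2}\delta_x\bigr\|=\tfrac{1}{\sqrt2}<1$ (using that $\delta_x$ is the backward shift, hence of norm $1$). The paper's proof stops there and does not carry out the scalar bookkeeping you flagged.

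Your generating-function check is not a digression; it is correctly detecting a misprint in the stated formula. From $(1-i)\delta_x+(1+i)\delta_y+\delta_x\delta_y=0$ one gets $\bigl((1+i)I+\delta_x\bigr)\delta_y=-(1-i)\delta_x$ (the paper drops the minus sign here), hence
\[
\delta_y \;=\; \frac{-(1-i)}{1+i}\,\delta_x\Bigl(I+\tfrac{1-i}{2}\delta_x\Bigr)^{-1}
\;=\; i\,\delta_x\Bigl(I-\tfrac{i-1}{2}\delta_x\Bigr)^{-1}
\;=\;\sum_{n\ge 0} i\Bigl(\tfrac{i-1}{2}\Bigr)^{n}\delta_x^{\,n+1}.
\]
The factor $i$ is genuinely there: since $\zeta_1(x,y)=x+iy$, one has $\delta_y e_1=i e_0$, whereas the series as printed gives $\delta_x e_1=e_0$. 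Equivalently, on the symbol side $\delta_y$ corresponds to $\dfrac{(i-1)z}{1+i+z}=\dfrac{iz}{1-\frac{i-1}{2}z}$, not $\dfrac{z}{1-\frac{i-1}{2}z}$. So the ``obstacle'' you identified is not a gap in your argument but a typo in the proposition; once the factor $i$ is restored, your outline and the paper's proof coincide and are complete.
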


\begin{proof}
Recall that the operator $\overline{\cD}$ was defined in
\eqref{COP}. Since the elements of $\mathcal H_{DA}$ are discrete
analytic we have $\overline{\cD}=0$ in $\mathcal H_{DA}$, that is,

\[
\label{deltay1} (1-i)\delta_x+(1+i)\delta_y+\delta_x\delta_y=0,
\]
and thus
\[
\delta_y\left((1+i)I+\delta_x\right)=(1-i)\delta_x.
\]
Since $\delta_x$ is an isometry, and has in particular norm $1$,
we can solve equation \eqref{deltay1} and obtain \eqref{deltay}.
The power expansion converges in the operator norm since
\[
\|\left(\frac{i-1}{2}\right)\delta_x\|=\frac{1}{\sqrt2}<1.
\]
\end{proof}

\begin{theorem}
The $C^*$-algebra generated by $\delta_x$, or equivalently by
$\delta_x$ and $\delta_y$ is the Toeplitz $C^*$-algebra.
\end{theorem}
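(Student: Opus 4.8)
The plan is to identify the $C^*$-algebra generated by $\delta_x$ with the Toeplitz $C^*$-algebra $\mathscr{T}$, which is the universal $C^*$-algebra generated by a single isometry (or concretely, the $C^*$-algebra generated by the unilateral shift on $\ell^2(\ZZ_+)$). First I would invoke Theorem~\ref{copy:shift}: the orthonormal basis $\{e_n\}_{n\ge 1}$ of $\cH_{DA}$ makes $\delta_x$ act as $\delta_x e_1 = 0$, $\delta_x e_n = e_{n-1}$ for $n>1$, so $\delta_x$ is unitarily equivalent to the \emph{backward} shift $S^*$ on $\ell^2$; equivalently its adjoint $\delta_x^*$ is a copy of the unilateral shift $S$. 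Hence the $C^*$-algebra $C^*(\delta_x) = C^*(\delta_x^*) = C^*(S)$, and by Coburn's theorem the $C^*$-algebra generated by any non-unitary isometry is canonically isomorphic to $\mathscr{T}$. This gives the first assertion.

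Next I would show that adjoining $\delta_y$ does not enlarge the algebra, i.e. $C^*(\delta_x,\delta_y) = C^*(\delta_x)$. This is immediate from the preceding Proposition: in $\cH_{DA}$ we have the norm-convergent expansion $\delta_y = \sum_{n=0}^\infty \left(\frac{i-1}{2}\right)^n \delta_x^{n+1}$, so $\delta_y$ already lies in the (norm-closed) $C^*$-algebra generated by $\delta_x$. Conversely $\delta_x \in C^*(\delta_x,\delta_y)$ trivially, so the two algebras coincide, and both equal $\mathscr{T}$ by the previous paragraph. One should also remark that $\delta_x$ is indeed \emph{not} unitary: since $\delta_x e_1 = 0$, it has nontrivial kernel, so $\mathscr{T}$ is the genuine Toeplitz algebra (containing the compacts) rather than $C(\mathbb{T})$.

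The only real point requiring care — and the step I expect to be the main obstacle at the level of rigor — is the identification $C^*(S) \cong \mathscr{T}$, i.e. Coburn's uniqueness theorem: every isometry $V$ with $\ker V^* \neq \{0\}$ on a Hilbert space generates a $C^*$-algebra isomorphic to $\mathscr{T}$, via a $*$-isomorphism sending $V$ to the shift. I would simply cite this standard fact rather than reprove it; the verification that $\delta_x^*$ is a non-unitary isometry (isometry because $\delta_x \delta_x^* = I$ by Theorem~\ref{copy:shift}, non-unitary because $\delta_x^* \delta_x = I - P$ where $P$ is the rank-one projection onto $\CC e_1$) is the routine piece that makes Coburn applicable. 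Assembling these three ingredients — the shift model for $\delta_x$, the series formula absorbing $\delta_y$, and Coburn's theorem — completes the proof.
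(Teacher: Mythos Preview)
Your proposal is correct and follows essentially the same route as the paper's proof: identify $\delta_x^*$ with the unilateral shift via the orthonormal basis $\{e_n\}$, invoke Coburn's theorem to conclude $C^*(\delta_x)=\mathscr{T}$, and use the norm-convergent series for $\delta_y$ from the preceding proposition to see that $\delta_y\in C^*(\delta_x)$. The paper's argument is terser (it simply cites Coburn and the preceding proposition), whereas you spell out the verification that $\delta_x^*$ is a non-unitary isometry; this is added detail rather than a different idea.
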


\begin{proof}
This follows from the preceding proposition and from
\cite{MR0213906}. Indeed it is known \cite{MR0213906} that the
Toeplitz $C^*$-algebra $\mathcal T$ is the unique $C^*$-algebra
generated by the shift. Since $\delta^*_x$ is a copy of the shift,
and $\delta_y\in C^*(\delta^*_x)$, the result follows.
\end{proof}

We now consider
\[
A={\rm Re}~\cZ=\frac{1}{2}\left(\cZ+\cZ^*\right),
\]
where $\cZ$ is defined from Definition \ref{defmult}.

\begin{theorem}
\label{tm84}
\mbox{}\\
$(i)$ The operator $A$ is essentially self-adjoint on the linear
span $\mathcal D$ of the functions $\zeta_n$, $n\in\mathbb Z_+$.\\
$(ii)$ On $\mathcal D$ it holds that
\[
[\delta_x,A]=\frac{1}{2}\left(I+\delta_x+\delta_x^2\right).
\]
$(iii)$ There exists a strongly continuous one parameter
semi-group $\alpha_t\,\,:\,\, \mathcal T\,\,\longrightarrow\,\,
\mathcal T$ such that
\[
\label{810}
(e^{itA})b(e^{-itA})=\alpha_t(b),\quad\forall
t\in\mathbb R,\,\,\forall b\in \mathcal T,
\]
where $\mathcal T$ denotes the Toeplitz $C^*$-algebra.
\end{theorem}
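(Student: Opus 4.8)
The plan is to treat the three parts in order, using the Lie-algebra relations of Theorem \ref{tm34} and the structure of $\cH_{DA}$ as a copy of the Hardy/Fock space via the ONB $e_n=\zeta_n/n!$.

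For part $(i)$: I would first write $A={\rm Re}\,\cZ$ explicitly on the basis. From \eqref{hu5} we have $\cZ\zeta_n=\zeta_{n+1}+n\zeta_n$, hence on the normalized basis $\cZ e_n=(n+1)e_{n+1}+n e_n$, so $\cZ$ is a weighted (unbounded) shift plus diagonal. Then $A=\tfrac12(\cZ+\cZ^*)$ is a Jacobi-type (tridiagonal, symmetric) operator on $\mathcal D={\rm span}\{e_n\}$ with off-diagonal weights growing like $n$ and diagonal entries $n$. Essential self-adjointness then follows from a classical criterion for Jacobi matrices: the growth of the weights is polynomial (here linear, $w_n\sim n/2$), so Carleman's condition $\sum 1/w_n=\infty$ holds, which guarantees that the Jacobi operator is in the limit-point case and hence essentially self-adjoint on the finitely supported sequences $\mathcal D$. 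Alternatively one invokes Nelson's analytic vector theorem: each $e_n$ is an analytic vector for $A$ because the moments $\|A^k e_n\|$ grow no faster than geometrically times $k!$, and the span of analytic vectors is dense, which gives essential self-adjointness.

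For part $(ii)$: this is a direct commutator computation on $\mathcal D$. Using $[\delta_x,\cZ]=1+\delta_x$ from \eqref{id1} and the fact that $\delta_x^*$-adjoint relations give $[\delta_x,\cZ^*]=([\,\cZ,\delta_x^*\,])^*$, one computes $[\delta_x,\cZ^*]$ on $\mathcal D$. Since $\delta_x e_n=e_{n-1}$ (with $\delta_x e_1=0$) by \eqref{copy:shift}, $\delta_x$ is the backward shift $S^*$; and $\cZ$ acts as above, so $\cZ^*$ is computed from $\langle \cZ e_n,e_m\rangle$. Assembling $[\delta_x,A]=\tfrac12([\delta_x,\cZ]+[\delta_x,\cZ^*])$ and simplifying should collapse to $\tfrac12(I+\delta_x+\delta_x^2)$; I expect the $\delta_x^2$ term to come precisely from the $[\delta_x,\cZ^*]$ piece, mirroring the $i(1+\delta_y+\delta_y^2)$ shape in Theorem \ref{tm34}. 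This is routine once the matrix entries of $\cZ$ are in hand.

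For part $(iii)$: here is where the real work lies. By $(i)$, $A$ has a self-adjoint closure $\bar A$ and $e^{it\bar A}$ is a strongly continuous one-parameter unitary group. Define $\alpha_t(b)=e^{it\bar A}\,b\,e^{-it\bar A}$ for $b\in B(\cH_{DA})$; strong continuity of $t\mapsto\alpha_t(b)$ on any fixed $b$ is automatic. The substantive claim is that $\alpha_t$ maps the Toeplitz $C^*$-algebra $\mathcal T$ (which by the earlier theorem is $C^*(\delta_x)$) into itself. To see this, it suffices to show $\alpha_t(\delta_x)\in\mathcal T$ for all $t$, since $\alpha_t$ is a $*$-homomorphism and $\mathcal T$ is generated by $\delta_x$. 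I would prove this by differentiating: on $\mathcal D$, $\tfrac{d}{dt}\alpha_t(\delta_x)=i\,\alpha_t([A,\delta_x])=-\tfrac{i}{2}\alpha_t(I+\delta_x+\delta_x^2)$ by $(ii)$, so the derivative of $\alpha_t(\delta_x)$ stays inside the algebra generated by $\alpha_t(\delta_x)$; thus the $\mathcal T$-valued curve satisfies an ODE with $\mathcal T$-valued right-hand side and stays in $\mathcal T$. More carefully, I would argue that $t\mapsto\alpha_t(\delta_x)$ is the (norm-continuous, since the commutator $[A,\delta_x]$ is bounded!) solution of $\dot u_t=-\tfrac{i}{2}(I+u_t+u_t^2)$ with $u_0=\delta_x$; because the right-hand side is a polynomial with scalar coefficients, the Picard iterates all lie in the unital Banach subalgebra $\mathcal T$, hence so does the limit $u_t$. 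Therefore $\alpha_t(\mathcal T)\subseteq\mathcal T$, giving the semigroup (in fact group) $\alpha_t:\mathcal T\to\mathcal T$.

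The main obstacle is part $(iii)$, specifically verifying rigorously that $\alpha_t$ preserves $\mathcal T$ rather than merely $B(\cH_{DA})$; the clean route is the observation that $[A,\delta_x]$ is \emph{bounded} (indeed a polynomial in $\delta_x$), so $t\mapsto\alpha_t(\delta_x)$ is actually norm-differentiable with derivative a polynomial in $\alpha_t(\delta_x)$, and one then invokes uniqueness of solutions to this Banach-algebra-valued ODE together with the fact that $\mathcal T$ is a closed unital subalgebra. A secondary technical point is ensuring the formal commutator identity in $(ii)$, proved on the core $\mathcal D$, propagates to the bounded operator identity $[\bar A\text{-generated dynamics},\delta_x]$ needed for the ODE argument; this is handled by noting both sides of $(ii)$ are bounded operators agreeing on a core, hence equal, and that $e^{it\bar A}$ leaves an appropriate dense set of analytic vectors invariant.
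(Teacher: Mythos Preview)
Your proposal is correct and closely parallels the paper's own argument, with two modest differences worth noting. For $(i)$, the paper also writes out the tridiagonal matrix of $A$ in the ONB $\{e_n\}$ and observes that the entries grow linearly in $n$, but instead of Carleman's condition or Nelson's analytic-vector theorem it simply cites a result of J{\o}rgensen on essential self-adjointness of banded operators; your Jacobi/Carleman route is more self-contained and equally valid. For $(ii)$ the paper computes $[A,\delta_x^*]e_n$ directly on basis vectors and then takes adjoints, rather than assembling the answer from the Lie relations of Theorem~\ref{tm34}; both are straightforward calculations. For $(iii)$ the paper uses the \emph{ad}-exponential series $e^{itA}be^{-itA}=\sum_n\frac{(it)^n}{n!}(\mathrm{ad}\,A)^n(b)$ and checks inductively that each $(\mathrm{ad}\,A)^n(\delta_x)$ is a polynomial in $\delta_x,\delta_x^*$ (hence lies in $\mathcal T$), deferring the convergence analysis to references; your ODE/Picard-iteration framing exploits the same bounded-commutator identity but packages it as a quadratic Banach-algebra ODE $\dot u_t=-\tfrac{i}{2}(I+u_t+u_t^2)$, which has the advantage that the a priori bound $\|u_t\|=\|\delta_x\|=1$ makes the local-Lipschitz/Picard argument clean and keeps the solution visibly inside the closed subalgebra $\mathcal T$. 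The two approaches are equivalent in content; yours is arguably more transparent about why the series converges.
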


\begin{proof}
In $(iii)$, we denote by $e^{itA}$ the unitary one-parameter
group generated by the self-adjoint operator $A$ from $(i)$. The
matrix representation of $A$ with respect to the ONB
$e_n:=\frac{1}{n!}\zeta_n$ is:
\[
\begin{pmatrix}
 & & &   &   & & & &\\
 & & &  & 0  & & & &\\
 & & &n-2&n-1&0& & &\\
 & & &n-1&n-1&n&0& &\\
 & & &    &n &\boxed{n}&n+1&\\
 &  & &   &0 &n+1&n+1&n+2\\
 &   & &   &0&0&n+2&n+2\\
 &    & &   & & &0&\\
 \end{pmatrix}
\]
It is therefore a banded infinite matrix with terms going to
infinity linearly with $n$. It follows from \cite{MR507913} that
$A$ is essentially self-adjoint.\\

$(ii)$ By definition of $\cZ$ and $\delta_x$
\[
\begin{split}
[A,\delta_x^*](e_n)&=(A\delta_x^*-\delta_x^*A)(e_n)\\
&=\frac{1}{2}(e_n+e_{n+1}+e_{n+2})\\
                   &=\frac{1}{2}(I+\delta_x^*+\delta_x^{*2})e_n,
\end{split}
\]
and hence the result.\\

$(iii)$ We have
\begin{equation}
\label{sum} e^{itA}be^{-itA}=\sum_{n=0}^\infty
\frac{(it)^n}{n!}\left({\rm ad}~A\right)^n (b),
\end{equation}
and
\[
\label{wert} \left({\rm ad}~A\right)^{n+1} (b)=[A,\left({\rm
ad}~A\right)^n b],
\]
We verify \eqref{sum} on monomials of $\delta_x$ and $\delta_x^*$
using \eqref{wert} and induction. See \cite{MR671318,MR548006}
for more details regarding limits.
\end{proof}

The next corollary deals with a flow. For more information on
this topic, see \cite{Nelson_flows}.

\begin{corollary}
The one-parameter group $\left\{\alpha_t\right\}\subset{\rm
Aut}~(\mathcal T)$ passes to a flow on the circle group $\mathbb
T=\left\{z\in\mathbb C\,\,;\,\, |z|=1\right\}$.
\end{corollary}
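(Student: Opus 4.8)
The plan is to exploit the fact, established in Theorem~\ref{tm84}(iii), that the one-parameter group $\{\alpha_t\}$ is implemented by the unitaries $e^{itA}$, where $A = {\rm Re}~\cZ$ acts on $\cH_{DA}$, and that $\alpha_t$ restricts to an automorphism group of the Toeplitz $C^*$-algebra $\mathcal T$. Since $\mathcal T$ contains the ideal $\mathcal K$ of compact operators with quotient $\mathcal T/\mathcal K \cong C(\mathbb T)$, the first step is to check that each $\alpha_t$ preserves $\mathcal K$; this is automatic because $\mathcal K$ is the unique nontrivial closed two-sided ideal of $\mathcal T$, hence invariant under every automorphism. Consequently $\alpha_t$ descends to a one-parameter automorphism group $\bar\alpha_t$ of the quotient $C(\mathbb T)$, and by Gelfand duality every automorphism of $C(\mathbb T)$ is induced by a homeomorphism of $\mathbb T$; strong continuity of $\{\alpha_t\}$ passes to strong continuity of $\{\bar\alpha_t\}$, which translates into a jointly continuous action, i.e.\ a flow, of $\mathbb R$ on $\mathbb T$.

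The key computational step is to identify this flow explicitly by tracking the action on the generator of $\mathcal T$, namely the shift $\delta_x^*$ (equivalently $\delta_x$). Using part $(ii)$ of Theorem~\ref{tm84}, $[A,\delta_x^*] = \tfrac12(I+\delta_x^*+\delta_x^{*2})$, so on the level of the symbol map $\sigma\colon\mathcal T\to C(\mathbb T)$ sending $\delta_x^*\mapsto \bar z$ (or $z$), the derivation ${\rm ad}(iA)$ pushes forward to the vector field $z\mapsto \tfrac{i}{2}(1 + z + z^2)\,\partial_z$ or its complex conjugate, after using that $\sigma$ kills compacts and is multiplicative. Thus $\bar\alpha_t$ is the flow of the real vector field obtained by restricting $\tfrac{i}{2}(1+z+z^2)\partial_z$ to $\mathbb T$ --- one checks that this vector field is indeed tangent to $\mathbb T$, since for $z = e^{i\theta}$ the quantity $\tfrac{i}{2}(1+z+z^2)$ times the outward normal direction has vanishing real part, equivalently $z^{-1}\cdot\tfrac{i}{2}(1+z+z^2) = \tfrac{i}{2}(z^{-1}+1+z) = \tfrac{i}{2}(1 + 2\cos\theta)$ is purely imaginary. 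Hence the induced ODE on $\theta$ is $\dot\theta = \tfrac12(1+2\cos\theta)$ (up to sign and the obvious normalization), whose flow is a genuine, complete flow on the circle.

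The main obstacle I anticipate is the passage from the \emph{formal} series identity \eqref{sum} for $e^{itA}be^{-itA}$ to a rigorous statement at the level of the $C^*$-algebra, since $A$ is only essentially self-adjoint and unbounded, so the series need not converge in norm. This is handled by first working on the dense invariant domain $\mathcal D = {\rm span}\{\zeta_n\}$, where all manipulations with ${\rm ad}~A$ are legitimate finite computations, then observing that $\alpha_t(\delta_x^*) = e^{itA}\delta_x^* e^{-itA}$ is a genuine element of $B(\cH_{DA})$ for each $t$ (conjugation by a unitary), and finally invoking strong continuity from Theorem~\ref{tm84}(iii) to conclude that $t\mapsto\alpha_t(b)$ is norm-continuous for each fixed $b$ and that $\alpha_t(\mathcal T)\subseteq\mathcal T$. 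Once this is in place, the reduction to $C(\mathbb T)$ via the symbol exact sequence $0\to\mathcal K\to\mathcal T\to C(\mathbb T)\to 0$ and the identification of the resulting flow on $\mathbb T$ are routine. A secondary point worth a line of care is verifying that the vector field is tangent to $\mathbb T$, i.e.\ that the derivation really descends to a \emph{real} vector field and not merely a complexified one; the short computation above, showing $z^{-1}\cdot\tfrac{i}{2}(1+z+z^2)$ is purely imaginary on $|z|=1$, closes that gap.
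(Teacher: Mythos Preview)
Your argument follows the same route as the paper: invoke the short exact sequence $0\to\mathcal K\to\mathcal T\to C(\mathbb T)\to 0$, observe that $\alpha_t$ leaves $\mathcal K$ invariant, and pass to the quotient. The paper justifies the invariance of $\mathcal K$ directly (conjugation by a unitary preserves compacts), whereas you argue via the uniqueness of $\mathcal K$ as a nontrivial closed ideal of $\mathcal T$; both are valid and essentially equivalent here.

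Where you go beyond the paper is in the explicit identification of the induced flow on $\mathbb T$: the paper stops once the descent to $C(\mathbb T)$ is established, without computing the vector field. Your computation from $[A,\delta_x^*]=\tfrac12(I+\delta_x^*+\delta_x^{*2})$ to the vector field $\dot\theta=\tfrac12(1+2\cos\theta)$ on the circle is correct and adds concrete content absent from the original. Your discussion of the unboundedness issues is likewise more careful than the paper, which simply takes Theorem~\ref{tm84}(iii) as given.
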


\begin{proof} By \cite{MR0213906}, the Toeplitz algebra
$\mathcal T$ has a represnetation as a short exact sequence
\[
0\,\,\longrightarrow\,\,\mathcal T/\mathcal K\,\,\longrightarrow
\mathcal T\,\,\longrightarrow\,\,C(\mathbb
T)\,\,\longrightarrow\,\, 0,
\]
where $\mathcal K$ is a copy of the $C^*$-algebra of all compact
operators on $\cH_{DA}$. Hence,
\[
\mathcal T/\mathcal K\,\,\simeq\,\, C(\mathbb T).
\]
Since the left hand-side of \eqref{810} leaves $\mathcal K$
invariant, it follows that
\[
\alpha_t\,\,:\,\,\mathcal T\,\,\longrightarrow\,\,\mathcal T
\]
passes to the quotient $\mathcal T/\mathcal K\,\,\simeq\,\,
C(\mathbb T)$.
\end{proof}
\section{A reproducing kernel Hilbert space of entire functions}
As we stated in the previous section, the C-K product has the
disadvantage of not being defined for all pairs of expandable
functions. In Section \ref{next} we introduce a different product
which turns the space of expandable functions into a ring, and
consider a related reproducing kernel Hilbert space. In
preparation we introduce in the present section a reproducing
kernel Hilbert space of entire functions of a complex variable
within which the results of Section \ref{next} can be set in a
natural way.\\

To set these results in a wider setting, let us recall a few
facts on Schur analysis, that is, on the study of functions
analytic and contractive in the open unit disk. If $s_0$ is such a
function (in the sequel, we write $s_0\in\mathscr S$), the
operator of multiplication by $s_0$ is a contraction from the
Hardy space of the open unit disk $\mathbf H_2$ into itself. The
kernel
\[
\frac{1-s_0(z)s_0(w)^*}{1-zw^*}
\]
is then positive definite in the open unit $\mathbb D$, and its
associated reproducing kernel Hilbert space $H(s_0)$ was first
studied by de Branges and Rovnyak. Spaces $H(s_0)$ and their
various generalizations play an important role in linear system
theory and in operator theory. See for instance
\cite{MR2002b:47144, adrs,MR94b:47022,Dym_CBMS} for more
information. Here we replace $\mathbf H_2$ by two spaces, a space
of entire functions in the present section and a space of
discrete analytic functions
in the next section.\\

Thus, let $\cH$ be a Hilbert space, and let $\cO$ denote the space
of $\mathbf L(\cH)$-valued functions analytic at the origin, and
consider the linear operator $T$ on $\cO$ defined by
\begin{equation}
\label{opeT}
T(z^nA_n)=\dfrac{z^n}{n!}A_n,\quad A_n\in\mathbf
L(\cH).
\end{equation}
Then $T\cO$ is a space of $\mathbf L(\cH)$-valued entire
functions. The operator $T$ induces a product $\prot$ of elements
in $T\cO$ via

$$(Tf)\prot(Tg)=T(fg).$$

\begin{theorem}
Let $\cH$ be a Hilbert space and let $A$ is a bounded operator
from $\cH$ into itself. Then the $\mathbf L(\cH)$-valued entire
function
$$(I_{\cH}-zA)^{-\prot}=(\sum_{n=0}^\infty\dfrac{ z^{n}A^n}{n!})=e^{zA}$$
satisfies
\[
(I_{\cH}-zA)\prot (I_{\cH}-zA)^{-\prot}=I_{\cH},
\]
and it is the only function in $T\cO$ with this property.
\end{theorem}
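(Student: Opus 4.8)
The plan is to exploit what the construction hands us for free: by definition, $T$ is a linear bijection of $\cO$ onto $T\cO$ satisfying $(Tf)\prot(Tg)=T(fg)$, i.e.\ $T$ is an isomorphism of $(\cO,\cdot)$ onto $(T\cO,\prot)$. Consequently the asserted identity is simply the image under $T$ of the trivial identity $(I_\cH-zA)(I_\cH-zA)^{-1}=I_\cH$ valid in $\cO$, and the uniqueness statement in $T\cO$ is just the transport of uniqueness of inverses in $\cO$.

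First I would note that $I_\cH-zA$ belongs to $\cO$ and is invertible there, with inverse $g(z):=\sum_{n=0}^{\infty}z^nA^n$; this geometric series converges on $\{|z|<1/\|A\|\}$ (and its value at $z=0$ is the invertible operator $I_\cH$), so $g\in\cO$, and $(I_\cH-zA)g=g(I_\cH-zA)=I_\cH$ on that disk, the two factors commuting since each is a power series in $A$. Next, from the very formula \eqref{opeT}, $T(I_\cH-zA)=I_\cH-zA$ (as $T$ fixes the constant term and sends $zA$ to $zA/1!=zA$), while $T(g)(z)=\sum_{n=0}^{\infty}\frac{z^n}{n!}A^n=e^{zA}$, which lies in $T\cO$ and is entire because $\limsup_{n\to\infty}(\|A^n\|/n!)^{1/n}=0$.

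The identity is then immediate: $(I_\cH-zA)\prot e^{zA}=T(I_\cH-zA)\prot T(g)=T\big((I_\cH-zA)g\big)=T(I_\cH)=I_\cH$, and symmetrically $e^{zA}\prot(I_\cH-zA)=I_\cH$. For uniqueness I would suppose $h\in T\cO$ satisfies $(I_\cH-zA)\prot h=I_\cH$ and write $h=T(\tilde h)$ with $\tilde h\in\cO$ (uniquely, $T$ being bijective). Then $T\big((I_\cH-zA)\tilde h\big)=I_\cH=T(I_\cH)$, so injectivity of $T$ forces $(I_\cH-zA)\tilde h=I_\cH$ in $\cO$; multiplying on the left by $g$ gives $\tilde h=g$, hence $h=T(g)=e^{zA}$.

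There is no real obstacle here, only bookkeeping; the one point worth a sentence is that $I_\cH-zA$ is genuinely invertible in $\cO$, which follows either from the geometric series or from the invertibility of its value $I_\cH$ at the origin, and that the factors $I_\cH-zA$ and $g$ commute, so that left and right $\prot$-inverses agree. If one wishes to sidestep convergence entirely, the same argument runs verbatim on formal power series in $z$ over $\mathbf L(\cH)$, on which $T$ and $\prot$ are defined word for word, after which one observes that all the series in sight converge on the indicated domains.
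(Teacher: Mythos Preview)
Your argument is correct and is essentially the approach the paper has in mind: the paper's one-line proof (``power expansion and norm estimates'') is exactly the content of your computation that $T$ carries the geometric series $\sum z^nA^n$ to $e^{zA}$ and transports the identity $(I_\cH-zA)\cdot g=I_\cH$ to the $\prot$-identity, with convergence handled by the trivial bound on $\|A^n\|/n!$. Your explicit use of the ring isomorphism $T:(\cO,\cdot)\to(T\cO,\prot)$ and the remark that $I_\cH-zA$ and $g$ commute (so that one-sided $\prot$-inverses are two-sided, making the uniqueness clause clean) are welcome clarifications of what the paper leaves implicit.
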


\begin{proof}
This comes from the power expansion and norm estimates.
\end{proof}

Take now $\cH=\mathbb C$ and let $\mathbf H_2$ denote the Hardy
space of the unit disk. Then $T$ is a positive contractive
injection from $\mathbf H_2$ into itself. Denote by $\bH$ the
space $T\bH_2$ equipped with the range norm:
$$\forall f\in\bH_2, \qquad \|Tf\|_\bH=\|f\|_2.$$
Then $T:\bH_2\longrightarrow \bH$ is unitary, and
 $\bH$ is a reproducing kernel Hilbert space of entire functions with the
reproducing kernel
$$K_{\bH}(z,w)=\sum_{n=0}^\infty\dfrac{(zw^*)^n}{(n!)^2}.$$

\begin{proposition}
$\bH$ is the Hilbert space of entire functions such that
$$\int_\CC|f(z)|^2K_0(2|z|)dA(z)<\infty,$$
where
$$K_0(r)=\dfrac{1}{\pi}\int_\RR\exp(-r\cosh t)dt$$
is the modified Bessel function of the second kind of order $0$.
\end{proposition}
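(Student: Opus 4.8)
We want to identify the reproducing kernel Hilbert space $\bH = T\bH_2$ with the Fock-type space of entire functions $f$ for which $\int_\CC |f(z)|^2 K_0(2|z|)\,dA(z) < \infty$. The natural strategy is to compute the Gram matrix of the monomials $z^n$ under the candidate weighted $L^2$ inner product, show that it equals $(n!)^2$ on the diagonal and $0$ off-diagonal (by rotational symmetry of the weight), and then match this with the known norm on $\bH$: since $T(z^n) = z^n/n!$ and $\|T f\|_\bH = \|f\|_2$, the monomials $z^n/n!$ form an orthogonal system in $\bH$ with $\|z^n/n!\|_\bH^2 = \|z^n\|_2^2 = 1$, equivalently $\|z^n\|_\bH^2 = (n!)^2$. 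So it suffices to verify that $\int_\CC |z|^{2n} K_0(2|z|)\,dA(z) = (n!)^2$ for every $n \in \ZZ_+$, and that the weighted space is genuinely a Hilbert space of entire functions (completeness plus the fact that polynomials are dense).

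First I would reduce the two-dimensional integral to a one-dimensional one: writing $z = re^{i\theta}$, rotational invariance of $K_0(2|z|)$ gives $\int_\CC |z|^{2n} K_0(2|z|)\,dA(z) = 2\pi \int_0^\infty r^{2n+1} K_0(2r)\,dr$. Then the substitution $r \mapsto r/2$ converts this to a standard Mellin transform of the Bessel function $K_0$. The classical formula $\int_0^\infty r^{s-1} K_0(r)\,dr = 2^{s-2}\,\Gamma(s/2)^2$ (valid for $\operatorname{Re} s > 0$), applied with $s = 2n+2$, yields $2^{2n}\,\Gamma(n+1)^2 = 2^{2n}(n!)^2$; tracking the factors of $2\pi$ and the $1/2$ scaling then gives exactly $(n!)^2$, as desired. (One can instead bypass the Bessel function by using the integral representation $K_0(2|z|) = \tfrac1\pi\int_\RR \exp(-2|z|\cosh t)\,dt$ given in the statement, swapping the order of integration, evaluating $\int_0^\infty r^{2n+1} e^{-2r\cosh t}\,dr = \tfrac{(2n+1)!}{(2\cosh t)^{2n+2}}$, and then reducing $\int_\RR (\cosh t)^{-2n-2}\,dt$ to a Beta integral; this is the route that makes the appearance of $K_0$ self-explanatory.)

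Once the moment identity $\|z^n\|_\bH^2 = \|z^n\|_{L^2(K_0(2|\cdot|)dA)}^2 = (n!)^2$ and orthogonality are in hand, the identification is essentially formal: an entire function $f(z) = \sum a_n z^n$ lies in the weighted space iff $\sum |a_n|^2 (n!)^2 < \infty$ iff $Tf = \sum a_n z^n/n! \cdot$ -- wait, more precisely iff $\sum |a_n|^2(n!)^2<\infty$, which is exactly the condition that $f$ pulls back under $T^{-1}$ to $\sum a_n n! z^n \in \bH_2$, i.e. $f \in \bH$, with equal norms; and the reproducing kernel is then $\sum_n z^n \overline{w}^n / \|z^n\|^2 = \sum_n (z\overline w)^n/(n!)^2 = K_\bH(z,w)$, matching the formula already recorded. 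The one point requiring a little care is density of polynomials in the weighted space (so that the monomials really form an orthogonal \emph{basis}, not merely an orthogonal set); this follows because the weight $K_0(2|z|)$ decays like $|z|^{-1/2} e^{-2|z|}$ at infinity, which is fast enough that the weighted space is contained in a classical Fock-type space where polynomial density is standard, or directly by a dilation argument $f_\rho(z) = f(\rho z)$, $\rho \uparrow 1$.

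**Main obstacle.** The only real work is the evaluation of the moment integral; everything else is bookkeeping through the unitary $T$. I expect the bulk of a careful write-up to be the Bessel/Beta computation and the verification of polynomial density, with the former being the step most likely to hide a stray constant.
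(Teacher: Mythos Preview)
Your approach is correct and matches the paper's, which also reduces everything to the Mellin-transform identity linking $K_0$ and $\Gamma^2$ (the paper simply cites Colombo for this and stops); you have merely fleshed out the moment computation and the density argument that the paper leaves implicit. One caution on the constant you yourself flag: the paper's normalization $K_0(r)=\tfrac{1}{\pi}\int_\RR e^{-r\cosh t}\,dt$ differs from the standard modified Bessel function by a factor $2/\pi$, so the classical formula $\int_0^\infty r^{s-1}K_0(r)\,dr=2^{s-2}\Gamma(s/2)^2$ you quote must be rescaled accordingly before the factors combine to $(n!)^2$; your alternative route through the explicit integral representation avoids this pitfall automatically.
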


\begin{proof}
This follows from the fact that the Mellin transform of the
square of the function $\Gamma$ is the function $K_0(2\sqrt{x})$.
See for instance \cite[p. 50]{colombo} for the latter.
\end{proof}

We note that $\bH$ is contractively included in the Fock space
since the reproducing kernel of the latter is
\[
K_F(z,w)=\sum_{n=0}^\infty \frac{z^nw^{*n}}{n!},
\]
and
\[
K_F(z,w)-K_{\bH}(z,w)=\sum_{n=0}^\infty(zw^*)^n\left(
\frac{1}{n!}- \frac{1}{(n!)^2}\right)
\]
is positive definite in $\mathbb C$. See for instance
\cite[Theorem I, p. 354]{aron}, \cite{saitoh} for differences of
positive definite functions.\\

In view of Liouville's theorem, the only multipliers on $\bH$ in
the sense of the usual pointwise product are constants. The class
of multipliers in the sense of the $\prot$ product is more
interesting.

\begin{theorem}\label{s1}
A function $s\in\cO$ is a contractive $\prot$-multiplier on $\bH$
if and only if it is of the form
$$s=Ts_0,\quad s_0\in\Sch,$$ where $\Sch$ denotes the Schur class
of functions analytic and contractive in the open unit disk.
\end{theorem}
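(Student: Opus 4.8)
The plan is to reduce the $\prot$-multiplier problem on $\bH$ to the classical Schur multiplier problem on $\bH_2$ via the unitary $T:\bH_2\to\bH$. The key observation is that multiplication by $s=Ts_0$ in the $\prot$ product corresponds, under $T$, to multiplication by $s_0$ in the ordinary product on $\bH_2$. Indeed, for $f=Tg$ with $g\in\bH_2$, the definition of $\prot$ gives $s\prot f=(Ts_0)\prot(Tg)=T(s_0g)$, so the operator $M_s^{\prot}$ of $\prot$-multiplication by $s$ on $\bH$ satisfies $M_s^{\prot}=T M_{s_0} T^{-1}$, where $M_{s_0}$ is ordinary multiplication by $s_0$ on $\bH_2$. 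Since $T$ is unitary from $\bH_2$ onto $\bH$, $M_s^{\prot}$ is a contraction on $\bH$ if and only if $M_{s_0}$ is a contraction on $\bH_2$, which by classical Schur theory holds if and only if $s_0\in\Sch$.

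First I would make precise what ``$\prot$-multiplier'' means: $s\in\cO$ is a $\prot$-multiplier on $\bH$ if for every $f\in\bH$ the product $s\prot f$ (a priori an element of $T\cO$) again lies in $\bH$, and the resulting linear map $f\mapsto s\prot f$ is bounded; it is contractive if this map has norm at most $1$. Next I would verify the intertwining identity $M_s^{\prot}T=TM_{s_0}$ on the dense set of polynomials (or directly from the power-series definitions \eqref{opeT}), noting that $s\in T\cO$ forces $s=Ts_0$ for a unique $s_0\in\cO$ analytic at the origin. Then the equivalence ``$M_s^{\prot}$ contractive on $\bH$'' $\iff$ ``$M_{s_0}$ contractive on $\bH_2$'' is immediate from unitarity of $T$. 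Finally I would invoke the classical fact that a function analytic at the origin is a contractive multiplier of $\bH_2$ precisely when it belongs to $\Sch$ (equivalently, when $\frac{1-s_0(z)s_0(w)^*}{1-zw^*}$ is positive definite in $\DD$), as recalled in the discussion preceding the theorem.

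The main subtlety is the direction showing that a contractive $\prot$-multiplier $s$ must actually be of the form $Ts_0$ with $s_0$ analytic in all of $\DD$ (not merely at the origin) and contractive there. Boundedness of $M_s^{\prot}$ on $\bH$ transfers to boundedness of $M_{s_0}$ on $\bH_2$; but one must then recall the standard argument that a bounded multiplier of $\bH_2$ is automatically in $H^\infty(\DD)$ with $\|s_0\|_\infty=\|M_{s_0}\|$ — e.g.\ by testing $M_{s_0}^*$ on the reproducing (Szeg\H o) kernels $k_w(z)=(1-zw^*)^{-1}$, which are eigenvectors with $M_{s_0}^*k_w=s_0(w)^*k_w$, forcing $|s_0(w)|\le\|M_{s_0}\|$ for all $w\in\DD$. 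Contractivity of $M_s^{\prot}$ then yields $\|s_0\|_\infty\le 1$, i.e.\ $s_0\in\Sch$. Conversely, if $s_0\in\Sch$ the operator $M_{s_0}$ is a contraction on $\bH_2$ and $s=Ts_0$ is the desired $\prot$-multiplier. I expect this passage through the reproducing-kernel eigenvector computation to be the only step requiring care; everything else is a formal consequence of the unitarity of $T$ and the definition of $\prot$.
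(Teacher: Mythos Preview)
Your proposal is correct and follows essentially the same route as the paper: both use the unitary $T:\bH_2\to\bH$ and the intertwining $M_s^{\prot}T=TM_{s_0}$ to reduce the question to the classical characterization of contractive multipliers on $\bH_2$. The paper's proof is slightly terser---it obtains $s=Ts_0$ with $s_0\in\bH_2$ directly from $s=s\prot 1\in\bH$, and then asserts the classical fact that $\|s_0 f\|_{\bH_2}\le\|f\|_{\bH_2}$ for all $f\in\bH_2$ implies $s_0\in\Sch$ without spelling out the reproducing-kernel eigenvector argument you supply.
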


\begin{proof}
Assume first that $s\in\cO$ is a contractive $\prot$-multiplier
on $\bH$. Then $s=s\lozenge1\in\bH$ and hence $s=Ts_0$ for some
$s_0\in\bH_2$. Furthermore, let $f\in\bH_2$. Since
$s\lozenge(Tf)=T(s_0f)\in\bH$,  $s_0f\in\bH_2.$ Since
$$\|f\|_{\bH_2}=\|Tf\|_\bH\geq\|s\lozenge(Tf)\|_\bH=\|T(s_0f)\|_\bH=\|s_0f\|_{\bH_2},$$
$s_0\in\Sch.$

Conversely, if $s=Ts_0$ where $s_0\in\Sch$, and $f\in\bH_2$ then
$s\lozenge(Tf)=T(s_0f)\in\bH$ and
$$\|s\lozenge(Tf)\|_\bH=\|T(s_0f)\|_\bH=\|s_0f\|_{\bH_2}
\leq\|f\|_{\bH_2}=\|Tf\|_\bH.$$
Thus $s$ is a contractive $\prot$-multiplier on $\bH$.
\end{proof}

Let $s_0\in\mathscr S$. The operator $M_{s_0}$ of pointwise
multiplication is a contraction from $\bH_2$ into itself. The
operator range $\sqrt{I-M_{s_0}M_{s_0}^*}$ endowed with the range
norm is called the associated de Branges-Rovnyak space. We denote
it by $H(s_0)$. Similarly one can associate with $s\in T\Sch$ a
reproducing kernel
$$K_s(z,w)=\left((I-M_sM_s^*)K_\bH(\cdot,w)\right)(z),$$
where $M_s$ denotes the operator of $\prot$-multiplication
by $s$ on $\bH$. The corresponding reproducing kernel Hilbert
space is $\ran(\sqrt{I-M_sM_s^*})$ with the range norm; it will
be denoted by $H(s)$.
\begin{theorem}\label{s2}
The mapping $f\mapsto Tf$ is unitary from de Branges - Rovnyak
space $H(s_0)$ onto $H(s)$.
\end{theorem}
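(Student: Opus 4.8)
The claim is that $f \mapsto Tf$ is unitary from $H(s_0)$ onto $H(s)$, where $s = Ts_0$. The strategy is to exploit the fact that $T$ intertwines pointwise multiplication by $s_0$ on $\bH_2$ with $\prot$-multiplication by $s$ on $\bH$: indeed, by construction of the $\prot$-product, $M_s(Tf) = s\prot(Tf) = T(s_0 f) = T M_{s_0} f$, so $M_s T = T M_{s_0}$ as operators on $\bH_2 \to \bH$. Since $T : \bH_2 \to \bH$ is unitary (by definition of the range norm on $\bH$), we also get $M_s^* = T M_{s_0}^* T^{-1}$, and hence $I - M_s M_s^* = T(I - M_{s_0}M_{s_0}^*)T^{-1}$ on $\bH$.

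First I would record the intertwining identity $M_s T = T M_{s_0}$ and its adjoint consequence just mentioned, so that $\sqrt{I - M_s M_s^*} = T\sqrt{I - M_{s_0}M_{s_0}^*}\,T^{-1}$ by the functional calculus (valid because $T$ is unitary and positivity is preserved under unitary conjugation). Next I would recall the general principle about operator ranges: if $R$ is a bounded operator on a Hilbert space $\cK$ with range norm $\|Rg\|_{\ran R} = \|(\ker R)^\perp\text{-part of }g\|_\cK$, and $U : \cK \to \cK'$ is unitary, then $URU^{-1}$ has range $U(\ran R)$, and the map $Rg \mapsto U R g$ is unitary from $\ran R$ (with its range norm) onto $\ran(URU^{-1})$ (with its range norm). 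Applying this with $R = \sqrt{I - M_{s_0}M_{s_0}^*}$ on $\cK = \bH_2$, $\cK' = \bH$, and $U = T$, the range norm on $H(s_0) = \ran R$ is carried isometrically onto the range norm on $\ran(TRT^{-1}) = \ran\sqrt{I - M_s M_s^*} = H(s)$, and the intertwiner realizing this is exactly $h \mapsto Th$.

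Finally I would check that the reproducing kernel $K_s$ as defined in the statement is genuinely the reproducing kernel of $\ran\sqrt{I - M_s M_s^*}$ with the range norm — this is the standard de Branges–Rovnyak identification $H(s) = \ran\sqrt{I - M_s M_s^*}$, and it uses nothing beyond the boundedness (contractivity) of $M_s$, which Theorem \ref{s1} supplies. Putting the pieces together: $T$ maps $H(s_0)$ into $H(s)$, it is isometric for the range norms by the operator-range principle, and it is onto because $T^{-1} M_s T = M_{s_0}$ shows the construction is symmetric in $s_0$ and $s$.

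**Main obstacle.** The only delicate point is the interplay between three different norms living on the same underlying space of entire functions: the $\bH$-norm, the Fock-type norm, and the range norm defining $H(s)$. One must be careful that $T$, which is only a contraction (not unitary) from $\bH_2$ into the Fock space, is nonetheless unitary onto $\bH$, and that the operator-range functor is being applied in the category where $T : \bH_2 \to \bH$ is the relevant unitary. Once the identity $M_s T = T M_{s_0}$ is in hand, everything else is the soft functional-analytic machinery of operator ranges and unitary conjugation, so I would present that identity first and prominently and then invoke the general lemma about ranges rather than re-deriving it.
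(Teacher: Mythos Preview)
Your proposal is correct and follows exactly the paper's approach: the paper's entire proof consists of the two lines $M_sT = TM_{s_0}$ and $\sqrt{I-M_sM_s^*}\,T = T\sqrt{I-M_{s_0}M_{s_0}^*}$, and you have simply fleshed out the functional-calculus and operator-range reasoning that the paper leaves implicit.
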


\begin{proof}
Since
$$M_sT=TM_{s_0},$$
$$\sqrt{I-M_sM_s^*}T=T\sqrt{I-M_{s_0}M_{s_0}^*}.$$
\end{proof}

The $H(s)$ spaces can be characterized in terms of
$\dd$-invariance, where $\dd$ is the differentiation operator:
$$\dd f = f^\prime.$$

\begin{lemma}
The operator $\dd$ is bounded on $\bH$; moreover,
\begin{equation}
\label{r0} \dd T=TR_0,
\end{equation}
where $R_0$ is the backward shift operator, and
$$\dd^*\dd=I_\bH-C^*C,\quad \dd\dd^*=I_\bH,$$
where $Cf:=f(0)$. Furthermore, the reproducing kernel of $\bH$ is
given by
$$K_\bH(z,w)=Ce^{z\dd}e^{w^*\dd^*}C^*.$$
\end{lemma}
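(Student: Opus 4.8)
The plan is to work on the Hardy space side via the unitary $T\colon\bH_2\to\bH$ and transport each assertion back and forth. Recall that on $\bH_2$ with orthonormal basis $\{z^n\}_{n\ge0}$, the backward shift is $R_0(\sum a_nz^n)=\sum a_{n+1}z^n$, and the evaluation-at-origin functional is $C_0f=f(0)$, so $R_0^*R_0=I-C_0^*C_0$ (the range of $R_0^*$ misses the constants) and $R_0R_0^*=I$. Since $T z^n = z^n/n!$ and $R_0 z^n = z^{n-1}$ (and $0$ for $n=0$), a direct check on basis elements gives $\dd T z^n = \dd(z^n/n!) = z^{n-1}/(n-1)! = T z^{n-1} = T R_0 z^n$ for $n\ge1$, and both sides kill $z^0$; this is exactly \eqref{r0}. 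Boundedness of $\dd$ on $\bH$ is then immediate since $\dd = T R_0 T^{-1}$ is conjugate (by a unitary) to the contraction $R_0$, hence $\|\dd\|_{\bH}=\|R_0\|_{\bH_2}\le 1$.

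Next I would deduce the adjoint relations. From $\dd = TR_0T^{-1}$ and $T$ unitary, $\dd^* = TR_0^*T^{-1}$, so $\dd^*\dd = TR_0^*R_0T^{-1} = T(I-C_0^*C_0)T^{-1} = I_{\bH} - (TC_0^*)(C_0T^{-1})$. Now observe that $Cf = f(0)$ on $\bH$ satisfies $C T = C_0$ on $\bH_2$: indeed $(Tf)(0)$ is the constant Taylor coefficient of $Tf$, which equals the constant coefficient of $f$, i.e. $f(0)=C_0 f$. Hence $C = C_0 T^{-1}$ and $C^* = T C_0^*$ (using $T^*=T^{-1}$), so $\dd^*\dd = I_{\bH}-C^*C$. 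Likewise $\dd\dd^* = TR_0R_0^*T^{-1} = T I T^{-1} = I_{\bH}$.

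For the reproducing kernel formula, the cleanest route is to expand $e^{w^*\dd^*}C^*$ as a vector in $\bH$ and pair it with $e^{z\dd}$ applied to an evaluation. Since $C^*$ maps the scalar $1$ to the constant function $1\in\bH$ (the reproducing kernel of $\bH$ at $0$ up to the normalization $K_{\bH}(z,0)=1$), and $\dd^* = TR_0^*T^{-1}$ acts on $T z^n = z^n/n!$ by $\dd^* (z^n/n!) = T R_0^* z^n = T z^{n+1} = z^{n+1}/(n+1)!$, one computes $e^{w^*\dd^*}C^* 1 = \sum_{n\ge0} \frac{(w^*)^n}{n!}\cdot \dd^{*n}1 = \sum_{n\ge0}\frac{(w^*)^n}{n!}\cdot\frac{z^n}{n!}$ as an element of $\bH$ — which is precisely $K_{\bH}(\cdot,w)$. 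Then applying $e^{z\dd}$ and $C$ (or more directly: $C e^{z\dd} g = \sum_k \frac{z^k}{k!}(\dd^k g)(0) = g(z)$ is the Taylor expansion of $g\in\bH$ at $z$, valid since elements of $\bH$ are entire) yields $C e^{z\dd} e^{w^*\dd^*}C^* = K_{\bH}(z,w)$.

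The main obstacle, and the only place needing genuine care rather than bookkeeping, is the convergence claim implicit in $Ce^{z\dd}g = g(z)$ and in the series for $e^{w^*\dd^*}C^*$: one must check these operator series converge in $\bH$ (equivalently, in $\bH_2$ under $T$) for all $z,w\in\CC$. This is where the contractivity $\|\dd\|_{\bH}\le1$, hence $\|\dd^{*n}C^*\|\le \|C^*\|$, does the work — the $\bH$-norms of the partial sums are dominated by $\sum \frac{|z|^n}{n!}\|\dd\|^n < e^{|z|}$, giving absolute convergence in $\bH$; and since $\bH\hookrightarrow$ (entire functions) continuously with pointwise evaluation bounded, the $\bH$-limit is computed pointwise term-by-term, which identifies it with the Taylor series of $g$. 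I would state this convergence as the one substantive lemma-within-the-proof and otherwise present the computation on the orthonormal basis $\{z^n\}$.
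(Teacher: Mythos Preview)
Your approach is essentially the same as the paper's: establish the intertwining relation $\dd T=TR_0$ and then transport everything through the unitary $T$ from the corresponding facts about the backward shift on $\bH_2$. The paper's own proof is extremely terse---it only writes out the verification of $\dd T=TR_0$ (on a general power series rather than on basis vectors, a cosmetic difference) and declares that the remaining claims ``follow from the definition of the operator $T$''; your treatment of the adjoint relations, the kernel formula, and the convergence of $e^{z\dd}$ and $e^{w^*\dd^*}$ supplies exactly the details the paper omits.
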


\begin{proof} The claims follow from the definition of the
operator $T$ in \eqref{opeT}. We prove only \eqref{r0}. Let
$f\in\mathbf H_2$ with power series expansion
\[
f(z)=\sum_{n=0}^\infty a_nz^n.
\]
Then,
\[
(R_0f)(z)=\sum_{n=1}^\infty a_nz^{n-1},
\]
and therefore
\[
\begin{split}
(TR_0f)(z)&=\sum_{n=1}^\infty \frac{a_n}{(n-1)!}z^{n-1}\\
          &=\frac{d}{dz}\left(\sum_{n=0}^\infty \frac{a_n}{n!}z^n\right)\\
          &=(\dd Tf)(z).
          \end{split}
          \]
\end{proof}
\begin{theorem}
A closed subspace $H$ of $\bH$ is $\dd$-invariant if and only if
$$H=\bH\ominus M_{Ts_0}\bH,$$
where $s_0(z)$ is an inner function.
\end{theorem}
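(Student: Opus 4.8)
The plan is to transfer the classical Beurling--Lax theorem through the unitary map $T:\bH_2\longrightarrow\bH$. Recall that in the Hardy space $\bH_2$, a closed subspace $\mathcal M$ is invariant under the backward shift $R_0$ if and only if $\mathcal M=\bH_2\ominus s_0\bH_2$ for some inner function $s_0$; equivalently, the orthogonal complement $\mathcal M^\perp$ is a shift-invariant subspace and hence of the form $s_0\bH_2$. So I would first establish the translation dictionary: by the Lemma just proved, $\dd T=TR_0$, so $T$ intertwines $R_0$ on $\bH_2$ with $\dd$ on $\bH$; since $T$ is unitary from $\bH_2$ onto $\bH$, a closed subspace $H\subseteq\bH$ is $\dd$-invariant if and only if $T^{-1}H$ is an $R_0$-invariant closed subspace of $\bH_2$.

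Next I would invoke the classical Beurling--Lax theorem for the backward shift: $T^{-1}H$ is $R_0$-invariant precisely when $T^{-1}H=\bH_2\ominus s_0\bH_2$ for some inner function $s_0\in\Sch$. Applying $T$ and using that $T$ is a surjective isometry, one gets $H=T(\bH_2\ominus s_0\bH_2)=T\bH_2\ominus T(s_0\bH_2)=\bH\ominus T M_{s_0}\bH_2$. Here I would use the intertwining relation $M_{Ts_0}T=TM_{s_0}$ (the identity $M_sT=TM_{s_0}$ recorded in the proof of Theorem \ref{s2}) to rewrite $T(s_0\bH_2)=TM_{s_0}\bH_2=M_{Ts_0}T\bH_2=M_{Ts_0}\bH$. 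Hence $H=\bH\ominus M_{Ts_0}\bH$, which is the asserted form; the converse direction is the same computation run backwards, since $M_{Ts_0}\bH=T(s_0\bH_2)$ is the image under the unitary $T$ of the $R_0^*$-invariant (i.e.\ forward-shift invariant) subspace $s_0\bH_2$, whose orthogonal complement is $R_0$-invariant, so its $T$-image is $\dd$-invariant.

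The one genuinely delicate point is that $M_{Ts_0}\bH$ must be a \emph{closed} subspace of $\bH$, so that $\bH\ominus M_{Ts_0}\bH$ makes sense as an orthogonal complement. This is where inner-ness of $s_0$ is essential: on $\bH_2$ the operator $M_{s_0}$ is an isometry when $s_0$ is inner, so $s_0\bH_2$ is closed, and transporting by the unitary $T$ keeps $M_{Ts_0}\bH=T(s_0\bH_2)$ closed. I would make this explicit rather than leaving it implicit, since for a general Schur-class $s_0$ (merely contractive) the range $M_{Ts_0}\bH$ need not be closed and the statement would fail as phrased. With that observed, the proof is essentially a two-line reduction to Beurling--Lax plus bookkeeping with the intertwiners $\dd T=TR_0$ and $M_{Ts_0}T=TM_{s_0}$.

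Thus the proof reads: by the Lemma, $T$ is a unitary operator from $\bH_2$ onto $\bH$ with $\dd T=TR_0$; hence $H\subseteq\bH$ is closed and $\dd$-invariant iff $T^{-1}H\subseteq\bH_2$ is closed and $R_0$-invariant, iff (Beurling--Lax) $T^{-1}H=\bH_2\ominus s_0\bH_2$ for some inner $s_0$, iff $H=\bH\ominus TM_{s_0}\bH_2=\bH\ominus M_{Ts_0}\bH$, the last equality because $M_{Ts_0}T=TM_{s_0}$ and $T\bH_2=\bH$, and $M_{Ts_0}\bH$ is closed since $M_{s_0}$ is an isometry on $\bH_2$ and $T$ is unitary.
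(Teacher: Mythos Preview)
Your proposal is correct and follows essentially the same route as the paper: transport the problem to $\bH_2$ via the unitary $T$, use $\dd T=TR_0$ to convert $\dd$-invariance into $R_0$-invariance, and then invoke the Beurling--Lax theorem. Your write-up is in fact more detailed than the paper's (you make explicit the intertwining $M_{Ts_0}T=TM_{s_0}$ and the closedness of $M_{Ts_0}\bH$), but the underlying argument is identical.
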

\begin{proof}
 Let $H$ be a closed subspace of $\bH$ then $H=TH_0$ where $H_0$ is a closed subspace of $\bH_2$.
$H$ is $\dd$-invariant if and only if $H_0$ is $R_0$-invariant,
which is equivalent to $\bH_2\ominus H_0$ being invariant under
multiplication by $z$. By the Beurling-Lax theorem, the last
condition holds if and only if $\bH_2\ominus H_0=M_{s_0}\bH_2,$
where $s_0$ is an inner function.
\end{proof}

\begin{theorem}
Let $s\in T\Sch$. Then $s$ admits the representation
$$s(z)=D+\int_0^zCe^{t\dd}Bdt,$$
where
$$\begin{pmatrix} \dd & B\\ C& D\end{pmatrix}:\begin{pmatrix} H(s)\\
\CC\end{pmatrix}\longrightarrow \begin{pmatrix} H(s)\\
\CC\end{pmatrix}$$ is a coisometry given by
\[
\nonumber
\begin{split}
\dd f&=f^\prime,\\
B1&=s^\prime,\\
Cf&=f(0),\\
D1&=s(0).
\end{split}
\]
\end{theorem}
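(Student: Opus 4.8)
The plan is to transport to $H(s)$, through the unitary $T$, the classical backward--shift coisometric realization of a Schur function in its de Branges--Rovnyak space. First I would recall the classical model: for $s_0\in\Sch$ the de Branges--Rovnyak space $H(s_0)$ is invariant under the backward shift $R_0$, one has $R_0s_0\in H(s_0)$, and the colligation
$$V_0:=\begin{pmatrix} R_0 & B_0\\ C_0 & D_0\end{pmatrix}:\begin{pmatrix} H(s_0)\\ \CC\end{pmatrix}\longrightarrow\begin{pmatrix} H(s_0)\\ \CC\end{pmatrix},\qquad B_01=R_0s_0,\quad C_0f=f(0),\quad D_01=s_0(0),$$
is a coisometry whose transfer function is $s_0$, i.e. $s_0(z)=D_0+zC_0(I-zR_0)^{-1}B_0$; see e.g. \cite{adrs}.

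Next I would transport this model. Since $s\in T\Sch$, write $s=Ts_0$ with $s_0\in\Sch$ and put $U:=T|_{H(s_0)}$, which is unitary onto $H(s)$ by Theorem \ref{s2}. From \eqref{r0}, $\dd T=TR_0$, so $H(s)=UH(s_0)$ is $\dd$-invariant and $\dd=UR_0U^{-1}$ on $H(s)$. Since $T$ leaves the value at the origin unchanged (immediate from \eqref{opeT}), I get $C(Tf)=(Tf)(0)=f(0)=C_0f$, hence $C=C_0U^{-1}$; likewise $D1=s(0)=(Ts_0)(0)=s_0(0)=D_01$; and $B1=s'=\dd s=\dd Ts_0=TR_0s_0=TB_01=UB_01$, so $B=UB_0$ and in particular $s'\in H(s)$. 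Therefore
$$\begin{pmatrix} \dd & B\\ C& D\end{pmatrix}=\begin{pmatrix} U & 0\\ 0 & 1\end{pmatrix}V_0\begin{pmatrix} U^{-1} & 0\\ 0 & 1\end{pmatrix},$$
which is unitarily equivalent to $V_0$ and hence a coisometry of $H(s)\oplus\CC$ onto itself; there are no domain issues since $\dd$ is bounded on $\bH$ by the Lemma that yields \eqref{r0}, so $e^{t\dd}$ is a norm-convergent bounded operator leaving $H(s)$ invariant.

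For the integral formula I would argue by power series: writing $s_0(z)=\sum_{n\ge 0}a_nz^n$ gives $s(z)=(Ts_0)(z)=\sum_{n\ge 0}\frac{a_n}{n!}z^n$, and differentiating this series shows $C\dd^{\,m}s=s^{(m)}(0)=a_m$. Hence $Ce^{t\dd}B1=Ce^{t\dd}s'=\sum_{n\ge 0}\frac{t^n}{n!}C\dd^{\,n}s'=\sum_{n\ge 0}\frac{a_{n+1}}{n!}t^n$, and integrating term by term,
$$D+\int_0^z Ce^{t\dd}B\,dt=a_0+\sum_{n\ge 0}\frac{a_{n+1}}{(n+1)!}z^{n+1}=\sum_{k\ge 0}\frac{a_k}{k!}z^k=s(z).$$

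The main obstacle is the first step: having at hand the exact statement of the canonical backward--shift coisometric realization in $H(s_0)$, including the membership $R_0s_0\in H(s_0)$. Once that classical input and the intertwining $\dd T=TR_0$ are in place, boundedness of $\dd$ on $\bH$ removes the domain subtleties and the rest is the routine transport and power-series bookkeeping described above.
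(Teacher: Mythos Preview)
Your proposal is correct and follows essentially the same route as the paper: start from the classical backward--shift coisometric realization $s_0(z)=D_0+zC_0(I-zR_0)^{-1}B_0$ on $H(s_0)$ and transport it to $H(s)$ via $T$ using the intertwining relation $\dd T=TR_0$. The only cosmetic differences are that you make the coisometry property explicit through the unitary conjugation $\begin{pmatrix}U&0\\0&1\end{pmatrix}V_0\begin{pmatrix}U^{-1}&0\\0&1\end{pmatrix}$, and you derive the integral formula by computing $Ce^{t\dd}s'$ directly from the power series of $s$, whereas the paper obtains it by applying $T$ to the geometric-series transfer function and rewriting $C_0R_0^nB_0$ as $C\dd^nB$; both amount to the same termwise identification.
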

\begin{proof}
Write $s=Ts_0$, where $s_0\in\Sch$. Then
$$s_0(z)=D_0+zC_0(I-zR_0)^{-1}B_0,$$
where
$$\begin{pmatrix} R_0 & B_0\\ C_0& D_0\end{pmatrix}:\begin{pmatrix} H(s_0)\\
\CC\end{pmatrix}\longrightarrow \begin{pmatrix} H(s_0)\\
\CC\end{pmatrix}$$
is a coisometry given by
\[
\nonumber
\begin{split}
R_0 f&=(f-f(0))/z,\\
B_01&=R_0s_0,\\
C_0f&=f(0),\\
D_01&=s_0(0)=s(0).
\end{split}
\]
Hence
\[
\nonumber
\begin{split}
s(z)&=D_0+\sum_{n=0}^\infty \dfrac{z^{n+1}}{(n+1)!}
C_0R_0^nB_0\\
&=D_0+\sum_{n=0}^\infty
\dfrac{z^{n+1}}{(n+1)!}C_0T^{-1}(TR_0T^{-1})^nTB_0\\
&=D+\sum_{n=0}^\infty \dfrac{z^{n+1}}{(n+1)!}C\dd^nB\\
&= D+\int_0^zCe^{t\dd}Bdt.
\end{split}
\]
\end{proof}

\begin{theorem}
Let $H$ be a Hilbert space and let
$$\begin{pmatrix}A&B\\C&D\end{pmatrix}:\begin{pmatrix}H\\ \CC
\end{pmatrix}\longrightarrow\begin{pmatrix}H\\ \CC\end{pmatrix}$$ be a
coisometry. Then the function
$$s(z)=D+\int_0^zCe^{tA}Bdt$$
is a contractive $\prot$-multiplier on $\bH$, and the
corresponding reproducing kernel is given by
$$K_s(z,w)=Ce^{zA}e^{w^*A^*}C^*.$$
\end{theorem}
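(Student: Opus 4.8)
The plan is to realize $s$ as $Ts_0$ for an appropriate $s_0\in\Sch$, and then invoke Theorem~\ref{s1} together with the coisometric realization already established for de Branges--Rovnyak spaces. First I would observe that a coisometry $\begin{pmatrix}A&B\\C&D\end{pmatrix}$ on $H\oplus\CC$ is exactly the kind of data that produces, via $s_0(z)=D+zC(I-zR_0')^{-1}B$ after identifying $A$ with a backward-shift-type operator, a Schur-class function; more directly, since the colligation is a coisometry the function $s_0(z)=D+zC(I-zA)^{-1}B$ lies in $\Sch$, which is the standard fact that a coisometric (in particular unitary) realization yields a contractive analytic function on $\DD$ (the defect/complementation identity for the colligation gives $I-s_0(z)s_0(w)^*=(1-zw^*)C(I-zA)^{-1}(I-w^*A^*)^{-1}C^*\ge 0$ on $\DD$). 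The key computational step is then the identity
\[
\int_0^z Ce^{tA}B\,dt=\sum_{n=0}^\infty\frac{z^{n+1}}{(n+1)!}CA^nB,
\]
so that if I set $s_0(z)=D+\sum_{n\ge0}z^{n+1}CA^nB$ (which is in $\Sch$ by the previous step), then applying the operator $T$ of \eqref{opeT} term by term yields exactly $s(z)=D+\int_0^zCe^{tA}B\,dt=Ts_0$. By Theorem~\ref{s1}, $s=Ts_0$ with $s_0\in\Sch$ is a contractive $\prot$-multiplier on $\bH$.

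For the reproducing kernel formula, I would transport the classical de Branges--Rovnyak kernel through $T$. From the coisometry one has the standard kernel identity on $\DD$,
\[
K_{s_0}(z,w)=\frac{1-s_0(z)s_0(w)^*}{1-zw^*}=C(I-zA)^{-1}(I-w^*A^*)^{-1}C^*=\sum_{n,m\ge0}z^n w^{*m}CA^n(A^*)^mC^*.
\]
Applying $T$ in the $z$-variable and $T$ (or rather its conjugate) in the $w$-variable, i.e. dividing the $z^nw^{*m}$ coefficient by $n!\,m!$, turns the double sum into $\sum_{n,m}\frac{z^nw^{*m}}{n!m!}CA^n(A^*)^mC^*=Ce^{zA}e^{w^*A^*}C^*$. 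By Theorem~\ref{s2} (the unitarity of $f\mapsto Tf$ from $H(s_0)$ onto $H(s)$), the reproducing kernel of $H(s)$ is the image of $K_{s_0}$ under this transform, which is precisely $Ce^{zA}e^{w^*A^*}C^*$.

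The main obstacle I anticipate is not the formal bookkeeping but making the two limiting arguments rigorous: checking that $T$ may be applied term by term to the power series for $s_0$ and for $K_{s_0}$ (i.e. that the resulting entire series converge and represent $\int_0^zCe^{tA}B\,dt$ and $Ce^{zA}e^{w^*A^*}C^*$ respectively), and confirming that the transform $f\mapsto Tf$ really carries the de Branges--Rovnyak kernel to $K_s$ as defined via $M_s$ on $\bH$ rather than merely to the formal series $Ce^{zA}e^{w^*A^*}C^*$. The first is handled by the norm estimates underlying the definition of $\bH$ (the series $\sum z^n A_n/n!$ converges whenever $\sum z^n A_n$ does, with control of $\bH$-norm by $\bH_2$-norm). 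The second follows once one notes $M_sT=TM_{s_0}$, hence $\sqrt{I-M_sM_s^*}\,T=T\sqrt{I-M_{s_0}M_{s_0^*}}$ exactly as in the proof of Theorem~\ref{s2}, so the range-norm reproducing kernel of $H(s)$ is the $T$-image of that of $H(s_0)$; identifying this image with $Ce^{zA}e^{w^*A^*}C^*$ is then the term-by-term computation above.
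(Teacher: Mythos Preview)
Your proposal is correct and follows essentially the same route as the paper: define $s_0(z)=D+zC(I-zA)^{-1}B$, note $s_0\in\Sch$ from the coisometry, observe $s=Ts_0$ (so Theorem~\ref{s1} gives the contractive $\prot$-multiplier), and then transport the classical de Branges--Rovnyak kernel $K_{s_0}(z,w)=C(I-zA)^{-1}(I-w^*A^*)^{-1}C^*$ through $T$ to obtain $K_s(z,w)=Ce^{zA}e^{w^*A^*}C^*$. The paper's proof is considerably terser---it simply records these steps without the series bookkeeping or the discussion of rigor you include---but the strategy is identical.
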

\begin{proof}
Set
$$s_0(z)=D+zC(I-zA)^{-1}B,$$
then $s_0\in\Sch$ and $s=Ts_0$. Since
$$K_{s_0}(z,w)=C(I-zA)^{-1}(I-wA)^{-*}C^*,$$
the formula for $K_s(z,w)$ follows.
\end{proof}

\begin{theorem}
A reproducing kernel Hilbert space $H$ of functions in $\cO$ is of
the form $H=H(s)$ for some $s\in T\Sch$ if and only if
\begin{enumerate}
\item $H$ is $\dd$-invariant;
\item for every $f\in H$
$$\|\dd f\|_H^2\leq\|f\|_H^2-|f(0)|^2.$$
\end{enumerate}
\end{theorem}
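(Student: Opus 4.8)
The plan is to reduce the characterization to the classical de Branges--Rovnyak characterization of $H(s_0)$ spaces inside $\mathbf H_2$, transported through the unitary $T:\mathbf H_2\to\bH$. Recall from the lemma above that $\dd T = TR_0$, so a closed subspace $H\subseteq\bH$ is $\dd$-invariant if and only if $H=TH_0$ with $H_0\subseteq\mathbf H_2$ an $R_0$-invariant subspace; more to the point, a Hilbert space of functions $H$ with a contractive inclusion into $\bH$ pulls back under $T^{-1}$ to a Hilbert space $H_0$ with a contractive inclusion into $\mathbf H_2$, and by Theorem~\ref{s2} the correspondence $H=H(s)\leftrightarrow H_0=H(s_0)$ with $s=Ts_0$ is an isometric identification. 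So the whole statement is the image, under $T$, of the known fact: a reproducing kernel Hilbert space $H_0$ contractively included in $\mathbf H_2$ equals $H(s_0)$ for some $s_0\in\Sch$ if and only if $H_0$ is $R_0$-invariant and $\|R_0 f\|_{H_0}^2\le\|f\|_{H_0}^2-|f(0)|^2$ for all $f\in H_0$. That classical result is essentially the one-sided version of the de Branges--Rovnyak / Rovnyak model and can be cited from \cite{adrs} or \cite{Dym_CBMS}.

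First I would establish the "only if" direction. Suppose $H=H(s)$ with $s=Ts_0$, $s_0\in\Sch$. By the preceding theorem (the realization theorem for $H(s)$), the operator $\dd$ maps $H(s)$ into itself and the colligation $\begin{pmatrix}\dd & B\\ C & D\end{pmatrix}$ is a coisometry on $H(s)\oplus\CC$. Coisometry of this block operator says precisely $\dd\dd^*+BB^*=I_{H(s)}$ and $C\dd^*+DB^*=0$ and $CC^*+DD^*=1$; reading off the first row applied to the adjoint, or equivalently computing $\|\dd f\|^2 + |Cf|^2 = \|\dd f\|^2+|f(0)|^2$ against $\|f\|^2$ using that $\begin{pmatrix}\dd^* & C^*\end{pmatrix}$ is an isometry from $H(s)$ into $H(s)\oplus\CC$, gives $\|\dd f\|_{H(s)}^2+|f(0)|^2\le\|f\|_{H(s)}^2$, which is condition (2); $\dd$-invariance is condition (1). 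Equivalently one transports the corresponding statement for $R_0$ on $H(s_0)$ through $T$.

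For the "if" direction, assume $H$ is $\dd$-invariant and satisfies the norm inequality. Set $H_0=T^{-1}H$ with the range norm, so that $T:H_0\to H$ is unitary; then $H_0$ is a reproducing kernel Hilbert space contractively included in $\mathbf H_2$ (contractivity of the inclusion $H\hookrightarrow\bH$ transports to $H_0\hookrightarrow\mathbf H_2$ because $T$ is unitary from $\mathbf H_2$ onto $\bH$ and from $H_0$ onto $H$). Since $\dd T=TR_0$, $\dd$-invariance of $H$ is equivalent to $R_0$-invariance of $H_0$, and the inequality $\|\dd f\|_H^2\le\|f\|_H^2-|f(0)|^2$ becomes, with $f=Tg$, the inequality $\|R_0 g\|_{H_0}^2\le\|g\|_{H_0}^2-|g(0)|^2$ (note $f(0)=(Tg)(0)=g(0)$ since $T$ fixes constant terms). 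By the classical characterization, $H_0=H(s_0)$ for some $s_0\in\Sch$. Put $s=Ts_0\in T\Sch$; by Theorem~\ref{s2}, $T$ is unitary from $H(s_0)$ onto $H(s)$, hence $H=TH_0=H(s)$, as desired.

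The main obstacle is not any of the transport steps, which are routine given $\dd T=TR_0$ and Theorem~\ref{s2}, but pinning down and invoking the correct form of the classical one-variable result: the de Branges--Rovnyak characterization is usually phrased for $R_0$-invariant contractively-included spaces with exactly the "structural inequality" $\|R_0 f\|^2\le\|f\|^2-|f(0)|^2$, and one must check that this inequality together with $R_0$-invariance really does force the space to be an $H(s_0)$ (and not merely sit inside one) — this is the substantive input, and it is the point where I would lean on the cited literature (\cite{adrs,Dym_CBMS,MR94b:47022}) rather than reprove the model theorem. A secondary point to be careful about is that the hypotheses only assume $H\subseteq\bH$ as a set with its own norm; one should note at the outset that condition (2) with $f$ replaced along an orbit, combined with $\dd$-invariance, already forces the inclusion $H\hookrightarrow\bH$ to be contractive, so that no separate contractivity hypothesis is needed.
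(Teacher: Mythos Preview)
Your proposal is correct, and for the ``only if'' direction it agrees with the paper (both invoke the unitary $T:H(s_0)\to H(s)$ of Theorem~\ref{s2}; your attempt to read the inequality directly off the coisometry relations is slightly garbled, but the transport alternative you give is exactly the paper's argument). For the ``if'' direction the approaches genuinely differ. You pull $H$ back through $T^{-1}$ to a space $H_0$ of functions in $\cO$, use your orbit-iteration of condition~(2) to get $\|f\|_H^2\ge\sum_n|f^{(n)}(0)|^2=\|f\|_\bH^2$ and hence contractive inclusion $H_0\hookrightarrow\bH_2$, and then cite the classical de Branges--Rovnyak characterization to obtain $H_0=H(s_0)$. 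The paper instead works intrinsically in $H$: from $\dd$-invariance one has $Ce^{z\dd}f=f(z)$ (with $Cf=f(0)$), so the reproducing kernel of $H$ is $L(z,w)=Ce^{z\dd}e^{w^*\dd^*}C^*$; condition~(2) reads $\dd^*\dd+C^*C\le I_H$, which permits a coisometric extension $\begin{pmatrix}\dd&B\\ C&D\end{pmatrix}$; setting $s(z)=D+\int_0^z Ce^{t\dd}B\,dt$ and invoking the preceding realization theorem gives $K_s=L$, hence $H=H(s)$. Your route is shorter and cleaner if one is content to quote the classical theorem as a black box; the paper's route is self-contained (it is, in effect, the classical proof transported) and makes the mechanism---coisometric completion producing the multiplier---explicit, while also sidestepping the need to establish the contractive inclusion $H\hookrightarrow\bH$ that your transport argument requires.
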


\begin{proof}
One direction follows immediately from Theorem \ref{s2}. The
proof of the other direction is modelled after the proof of
\cite[Theorem 3.1.2, p. 85]{adrs} and is done as follows: let $H$
be $\dd$-invariant; then
 for every $f\in H$
$$Ce^{z\dd}f=f(z),$$
where $Cf=f(0).$ Hence the reproducing kernel of $H$ is given by
$$L_(z,w)=Ce^{z\dd}e^{w^*\dd^*}C^*.$$ Since
$$\dd^*\dd+C^*C\leq I,$$
there exists a coisometry

$$\begin{pmatrix} \dd & B\\ C& D\end{pmatrix}:\begin{pmatrix} H\\
\CC\end{pmatrix}\longrightarrow \begin{pmatrix} H\\
\CC\end{pmatrix}.$$ But the the function
$$s(z)=D+\int_0^zCe^{t\dd}Bdt$$
is a contractive $\prot$-multiplier and the associated kernel
$K_s(z,w)$ coincides with $L(z,w)$. Hence $H=H(s).$
\end{proof}
It is also of interest to  consider $\prot$-rational matrix
valued functions.

\begin{theorem}
\label{s5}
Tne following are equivalent:
\begin{enumerate}
\item A function $f\in T\cO$ is $\prot$-rational in the
sense that  for some  polynomial $p(z)$, not vanishing at the
origin, $p\prot f$ is also a polynomial.
\item $f(z)$ is of the form
$$f(z)=D+\int_0^zCe^{tA}Bdt$$ with $A,B,C,D$ - matrices of suitable dimensions;
\item the columns  of $\dd f$ belong to a finite-dimensional $\dd$-invariant space.
\end{enumerate}
\end{theorem}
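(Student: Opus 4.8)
The plan is to prove the three conditions equivalent by establishing the cycle $(1)\Rightarrow(3)\Rightarrow(2)\Rightarrow(1)$, transporting everything back to the classical rational/realization theory on the Hardy space via the unitary $T\colon\bH_2\to\bH$ and the intertwining relation $\dd T=TR_0$ from \eqref{r0}.

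First I would handle $(1)\Rightarrow(3)$. Suppose $f\in T\cO$ and $p(z)$ is a polynomial with $p(0)\ne 0$ such that $p\prot f$ is a polynomial $q(z)$. Writing $f=Tf_0$ and $p=Tp_0$ (with $p_0$ the polynomial obtained from $p$ by multiplying the $n$-th coefficient by $n!$, so that $p_0(0)=p(0)\ne0$), the definition of $\prot$ gives $T(p_0f_0)=q$, hence $p_0f_0$ is a polynomial and $f_0=q_0/p_0$ is a scalar rational function analytic at the origin, where $q_0=T^{-1}q$. A classical rational function analytic at $0$ has $R_0 f_0$ lying in the finite-dimensional $R_0$-invariant space spanned by $f_0,R_0f_0,R_0^2f_0,\dots$, which is finite-dimensional precisely because $f_0$ is rational (this is the standard fact that the McMillan degree equals $\dim \operatorname{span}\{R_0^n f_0\}$). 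Applying $T$ and using $\dd T = T R_0$, the vectors $\dd^n f = \dd^{n-1}(\dd f) = T R_0^n f_0$ span a finite-dimensional $\dd$-invariant subspace of $T\cO$; since $\dd f = T R_0 f_0$ lies in it, condition $(3)$ follows (in the matrix-valued case one argues column by column).

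Next, $(3)\Rightarrow(2)$ is a realization/recursion argument. If the columns of $\dd f$ lie in a finite-dimensional $\dd$-invariant space $\mathcal N$, pick a basis and let $A_0$ denote the matrix of $R_0 = T^{-1}\dd T$ acting on $T^{-1}\mathcal N$, let the columns of $T^{-1}\dd f$ be encoded by a matrix $B$ (as a map out of $\CC$ into $T^{-1}\mathcal N$), let $C$ read off the value at $0$ on $T^{-1}\mathcal N$, and set $D=f(0)$. Then $f_0:=T^{-1}f$ satisfies $R_0 f_0 = A_0(T^{-1}\dd f\text{-data})$ in the sense that $f_0(z)=D+zC(I-zA_0)^{-1}B$ by the standard state-space expansion of a rational function from its backward-shift data. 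Applying $T$ and expanding,
\[
f(z)=T f_0 = D + \sum_{n=0}^\infty \frac{z^{n+1}}{(n+1)!} C A_0^n B = D+\int_0^z C e^{tA}B\,dt,
\]
with $A:=A_0$ (this is exactly the computation already carried out in the proof of the earlier realization theorem for $s\in T\Sch$, only now without the contractivity hypothesis). This gives $(2)$.

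Finally, $(2)\Rightarrow(1)$: given $f(z)=D+\int_0^z Ce^{tA}B\,dt$ with $A,B,C,D$ matrices, set $f_0(z)=D+zC(I-zA)^{-1}B$, which is a matrix rational function analytic at $0$ with $f=Tf_0$ (same expansion as above, read backwards). Let $p_0(z)=\det(I-zA)$, the characteristic polynomial reversed, so $p_0(0)=1\ne0$ and $p_0 f_0$ is a polynomial (clearing the only possible denominator). Put $p:=T^{-1}$ applied coefficient-wise — more precisely, let $p(z)$ be the polynomial whose $n$-th Taylor coefficient is $n!$ times that of $p_0$, so $p=Tp_0$ up to the $T$-normalization and $p(0)=1\ne0$; then $p\prot f = T(p_0 f_0)$ is a polynomial, which is $(1)$. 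The main obstacle I anticipate is purely bookkeeping: keeping the two "rescalings" straight — the one hidden in $\prot$ (multiplication of the $n$-th coefficient by $n!$, or its inverse) versus the one converting $z^{n+1}/(n+1)!$ sums into the integral $\int_0^z e^{tA}$ — and making sure in the matrix-valued case that "$\prot$-rational with a scalar polynomial $p$" interacts correctly with taking $p_0=\det(I-zA)$ scalar; none of the analysis is delicate once the algebra of $T$ and $\dd T = TR_0$ is used systematically, and the contractive sub-case is already fully worked out in the preceding theorems, so the present proof is mostly a matter of pointing to those computations with the contractivity dropped.
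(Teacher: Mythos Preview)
Your approach is correct and is essentially the same as the paper's: both reduce everything to the classical theory of rational functions via the map $T$ and the intertwining $\dd T = TR_0$, the paper's proof being simply the one-line observation that $f\in T\cO$ is $\prot$-rational if and only if $f=Tf_0$ with $f_0\in\cO$ rational in the usual sense (after which the three classical equivalences --- rationality, state-space realization, finite-dimensional backward-shift orbit --- transport over exactly as you spell out). One small bookkeeping slip in your $(2)\Rightarrow(1)$: the $n$-th Taylor coefficient of $p=Tp_0$ is $1/n!$ times that of $p_0$, not $n!$ times, but you write and use the correct relation $p=Tp_0$ regardless.
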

\begin{proof}
It suffices to observe that a function $f\in T\cO$ is $\prot$-
rational if and only if it is of the form $f=Tf_0$
where$f_0\in\cO$ is rational in the usual sense.
\end{proof}

\section{A reproducing kernel Hilbert space of expandable discrete analytic function}
\label{next}
In parallel with the previous section, we introduce
the product $\boxdot$ of expandable discrete analytic functions by
\[
\zeta_n\boxdot\zeta_m=\dfrac{m!n!\zeta_{m+n}}{(m+n)!}.
\]
The advantage of this product versus the C-K one is that the
space of expandable discrete analytic functions forms a ring.
\\

Consider the linear mapping $V:z^n\mapsto\zeta_n.$ Then $VT$
maps, in particular, the space of functions analytic in a
neighborhood of the closed disk $\{z:|z|\leq1/\sqrt{2}\}$ onto
the space of expandable functions. Then $V\bH$ with the range
norm is the reproducing kernel Hilbert space $\cH_{DA}$ with the
reproducing kernel \eqref{rkda}

$$K((x_1,y_1),(x_2,y_2))=
\sum_{n=0}^\infty\dfrac{\zeta_n(x_1,y_1)\zeta_n(x_2,y_2)^*}{(n!)^2}.$$

Note that
$$V\dd=\delta_xV,\quad V(e^zA)=e_{x,y}(A).$$

Since $V:\bH\longrightarrow \cH_{DA}$ is unitary, the following
theorems are direct consequences of Theorems \ref{s1}-\ref{s5} in
the previous section. We state them here in order to emphasize
the new product.

\begin{theorem}
A closed subspace $H$ of $\cH_{DA}$ is $\delta_x$-invariant if and
only if
$$H=\cH_{DA}\ominus M_{VTs_0}\cH_{DA},$$
where $s_0(z)$ is an inner function.
\end{theorem}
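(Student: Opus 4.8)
The plan is to transport the corresponding statement for the space $\bH$ (the theorem just before the last one in Section \ref{sec8}, characterizing $\dd$-invariant closed subspaces of $\bH$ via inner functions) across the unitary $V:\bH\longrightarrow\cH_{DA}$. First I would recall that $V:\bH\longrightarrow\cH_{DA}$ is unitary and satisfies the intertwining relation $V\dd=\delta_xV$, which has already been recorded in Section \ref{next}. Hence $V$ sets up a bijection between closed subspaces of $\bH$ and closed subspaces of $\cH_{DA}$, and under this bijection a closed subspace $H\subseteq\cH_{DA}$ is $\delta_x$-invariant if and only if $V^{-1}H\subseteq\bH$ is $\dd$-invariant.

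Next I would invoke the already-proved characterization of $\dd$-invariant closed subspaces of $\bH$: such a subspace is of the form $\bH\ominus M_{Ts_0}\bH$ with $s_0$ inner. Applying $V$, and using that $V$ is unitary (so it maps orthogonal complements to orthogonal complements), we get
\[
H=V\left(\bH\ominus M_{Ts_0}\bH\right)=V\bH\ominus V M_{Ts_0}\bH=\cH_{DA}\ominus V M_{Ts_0}\bH.
\]
It then remains to identify $V M_{Ts_0}\bH$ with $M_{VTs_0}\cH_{DA}$, i.e. to check the multiplier intertwining $V M_{Ts_0}=M_{VTs_0}V$ on $\bH$. This is the one genuinely computational point: by definition $M_{VTs_0}$ is $\boxdot$-multiplication by $VTs_0$ on $\cH_{DA}$, and since $V$ is built precisely so that $V(fg)$ (usual product of analytic functions) corresponds to the $\boxdot$-product of the images — just as $T$ converts the ordinary product into the $\prot$-product and $V$ converts $\prot$ into $\boxdot$ through $V:z^n\mapsto\zeta_n$ together with the defining rule $\zeta_n\boxdot\zeta_m=\frac{m!n!}{(m+n)!}\zeta_{m+n}$ — one has $V(Ts_0\prot Tf)=(VTs_0)\boxdot(VTf)$ for all $f\in\bH_2$, which is exactly the assertion $V M_{Ts_0}=M_{VTs_0}V$.

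The main obstacle is therefore purely bookkeeping: making precise that $V$ (composed with $T$) is a ring homomorphism from analytic functions with the ordinary product to expandable discrete analytic functions with the $\boxdot$-product, so that multiplication operators are intertwined. Once that compatibility is in hand, the theorem is immediate, and I would present the proof as a two-line consequence of the preceding theorem together with unitarity of $V$ and the multiplier intertwining relation.
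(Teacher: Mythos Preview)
Your proposal is correct and follows exactly the route the paper takes: the paper does not give a separate proof here but simply declares this theorem (along with the others in Section~\ref{next}) to be a direct consequence of the corresponding result for $\bH$ via the unitary $V:\bH\to\cH_{DA}$, relying on the intertwining $V\dd=\delta_xV$ and the fact that $VT$ carries the ordinary product to the $\boxdot$-product (hence $VM_{Ts_0}=M_{VTs_0}V$). Your write-up just makes these implicit steps explicit.
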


\begin{theorem}
Let $s\in VT\Sch$. Then $s$ admits the representation
$$s(x,y)=D+Ce_{x,y}(\delta_x)\boxdot (\zeta_1(x,y)B),$$
where
$$\begin{pmatrix} \delta_x & B\\ C& D\end{pmatrix}:
\begin{pmatrix} H(s)\\ \CC\end{pmatrix}\longrightarrow
\begin{pmatrix} H(s)\\ \CC\end{pmatrix}$$ is a coisometry given by
\[
\nonumber
\begin{split}
B1&=\delta_xs,\\
Cf&=f(0),\\
D1&=s(0).
\end{split}
\]
\end{theorem}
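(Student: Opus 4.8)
The plan is to deduce this theorem as a direct translation, via the unitary $V:\bH\longrightarrow\cH_{DA}$, of the realization theorem for $\prot$-multipliers on $\bH$ (the theorem stating that each $s\in T\Sch$ has the form $s(z)=D+\int_0^zCe^{t\dd}B\,dt$ with the coisometric colligation $\begin{pmatrix}\dd&B\\C&D\end{pmatrix}$ on $H(s)\oplus\CC$). First I would recall the intertwining relations recorded just before the statement, namely $V\dd=\delta_xV$ and $V(e^{zA})=e_{x,y}(A)$, together with the definition of the new product $\boxdot$ through $\zeta_n\boxdot\zeta_m=\frac{m!n!}{(m+n)!}\zeta_{m+n}$; the point of $\boxdot$ is exactly that $V$ carries the pointwise product of power series to $\boxdot$, i.e. $V(fg)=Vf\boxdot Vg$ for $f,g$ in the relevant domain, just as $T$ was built so that $(Tf)\prot(Tg)=T(fg)$.

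Next I would take $s\in VT\Sch$, write $s=VTs_0$ with $s_0\in\Sch$, and apply the already-proved realization theorem in $\bH$ to $Ts_0\in T\Sch$: there is a coisometry $\begin{pmatrix}\dd&B_0\\C_0&D_0\end{pmatrix}$ on $H(Ts_0)\oplus\CC$ with $B_01=\dd(Ts_0)$, $C_0f=f(0)$, $D_01=(Ts_0)(0)=s(0)$, and $(Ts_0)(z)=D_0+\int_0^zC_0e^{t\dd}B_0\,dt=D_0+\sum_{n\ge0}\frac{z^{n+1}}{(n+1)!}C_0\dd^nB_0$. Now apply $V$ termwise. Using $V\dd^n=\delta_x^nV$ and the fact that $V$ takes $\sum_n\frac{z^n}{n!}a_n$ to $\sum_n\frac{1}{n!}\zeta_n a_n = e_{x,y}(\cdot)$-type sums, the series $\sum_{n\ge0}\frac{z^{n+1}}{(n+1)!}C_0\dd^nB_0$ is carried to $\sum_{n\ge0}\frac{\zeta_{n+1}}{(n+1)!}C\delta_x^nB$, where $B:=VB_0$ (so $B1=\delta_xs$) and $C:=C_0V^{-1}$ (so $Cf=f(0)$), $D:=D_0$. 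I then recognize this sum as $Ce_{x,y}(\delta_x)\boxdot(\zeta_1 B)$: indeed $e_{x,y}(\delta_x)=\sum_n\frac{\delta_x^n}{n!}\zeta_n$ and $\zeta_n\boxdot\zeta_1=\frac{n!1!}{(n+1)!}\zeta_{n+1}$, so $\big(\sum_n\frac{\delta_x^n}{n!}\zeta_n\big)\boxdot\zeta_1=\sum_n\frac{\delta_x^n}{n!}\cdot\frac{n!}{(n+1)!}\zeta_{n+1}=\sum_n\frac{\delta_x^n}{(n+1)!}\zeta_{n+1}$, which matches after inserting the coefficients $C$ and $B$. Finally, since $V$ is unitary and $H(s)=VH(Ts_0)$ (this is the $\cH_{DA}$-analogue of Theorem \ref{s2}, with $V$ in place of $T$), the colligation $\begin{pmatrix}\delta_x&B\\C&D\end{pmatrix}$ on $H(s)\oplus\CC$ is unitarily equivalent to $\begin{pmatrix}\dd&B_0\\C_0&D_0\end{pmatrix}$ and hence is itself a coisometry, with the asserted entries.

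The genuinely routine parts are the bookkeeping of the unitary transport of norms and the identification of the transported series with $Ce_{x,y}(\delta_x)\boxdot(\zeta_1 B)$; these I would present but not belabor. The step I expect to require the most care is making precise that $V$ intertwines pointwise multiplication of power series with $\boxdot$ on the appropriate domains, and in particular that $V\big((Ts_0)(z)\big)$, where $Ts_0$ is an entire function, is legitimately computed termwise — one must check convergence, which is where the Cauchy estimate \eqref{esti} and Theorem \ref{lim2} (radius $\sqrt2$, so that functions analytic near $\{|z|\le1/\sqrt2\}$ are mapped into expandable functions) enter, guaranteeing that $VTs_0$ is a well-defined expandable function and that the $\boxdot$-expression converges. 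Once that is in place, everything else is formal consequence of the previous section's results together with the intertwining identities $V\dd=\delta_xV$ and $V(e^{zA})=e_{x,y}(A)$.
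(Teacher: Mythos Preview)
Your proposal is correct and follows exactly the approach the paper intends: the paper does not give a separate proof of this theorem but states that it (together with its neighbors) is a direct consequence of the corresponding results in the preceding section via the unitary $V:\bH\to\cH_{DA}$ and the intertwining relations $V\dd=\delta_xV$, $V(e^{zA})=e_{x,y}(A)$. You have simply written out this transport in detail, including the identification $\sum_{n\ge 0}\frac{\zeta_{n+1}}{(n+1)!}C\delta_x^nB=Ce_{x,y}(\delta_x)\boxdot(\zeta_1 B)$ and the convergence check, which the paper leaves implicit.
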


\begin{theorem}
Let $H$ be a Hilbert space and let
$$\begin{pmatrix}A&B\\C&D\end{pmatrix}:\begin{pmatrix}H\\ \CC
\end{pmatrix}\longrightarrow
\begin{pmatrix}H\\ \CC\end{pmatrix}$$
be a coisometry. Then the function
$$s(z)=D+Ce_{x,y}(A)\boxdot (\zeta_1(x,y)B),$$
is a contractive $\boxdot$-multiplier on $\cH_{DA}$, and the
corresponding reproducing kernel is given by
$$K_s((x_1,y_1),(x_2,y_2))=Ce_{x_1,y_1}(A)(e_{x_2,y_2}(A))^*C^*.$$
\end{theorem}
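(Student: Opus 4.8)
The plan is to transport the statement through the unitary $V:\bH\longrightarrow\cH_{DA}$ to the setting of the previous section, where the analogous result has already been established. Recall that $V$ intertwines $\dd$ with $\delta_x$ (i.e. $V\dd=\delta_xV$) and sends $e^{z\dd}$ to $e_{x,y}(\delta_x)$ via $V(e^{zA})=e_{x,y}(A)$, and that $V$ carries the $\prot$-product to the $\boxdot$-product (this is exactly how $\boxdot$ was defined: $\zeta_n\boxdot\zeta_m=\frac{m!n!}{(m+n)!}\zeta_{m+n}$ corresponds under $VT$ to $z^n\prot z^m=z^{m+n}$). The key observation is that the data $\begin{pmatrix}A&B\\C&D\end{pmatrix}$ is the \emph{same} coisometry appearing in the earlier theorem stating that $s_0(z)=D+\int_0^zCe^{tA}B\,dt$ is a contractive $\prot$-multiplier on $\bH$ with kernel $K_{s_0}(z,w)=Ce^{zA}e^{w^*A^*}C^*$; only the identification of the ambient space has changed.

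First I would apply the earlier theorem to the given coisometry to obtain the contractive $\prot$-multiplier $\widetilde{s}\in T\cO$ given by $\widetilde{s}(z)=D+\int_0^z Ce^{tA}B\,dt$, together with its kernel $K_{\widetilde s}(z,w)=Ce^{zA}e^{w^*A^*}C^*$. Next I would push $\widetilde s$ forward: set $s=V\widetilde s$, which is a function on $\ZZ_+\times\ZZ$. Since $V$ is unitary from $\bH$ onto $\cH_{DA}$ and intertwines $\prot$-multiplication with $\boxdot$-multiplication (because $M_{V\widetilde s}V=VM_{\widetilde s}$ on the dense set of $\zeta_n$'s, by the definitions of the two products), $s$ is automatically a contractive $\boxdot$-multiplier on $\cH_{DA}$. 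The reproducing kernel of $\ran\sqrt{I-M_sM_s^*}$ with the range norm is then the pushforward of $K_{\widetilde s}$, namely $K_s((x_1,y_1),(x_2,y_2))=(VK_{\widetilde s}(\cdot,w)V^*)$ evaluated appropriately; because $V(e^{zA})=e_{x,y}(A)$ and the $C,C^*$ are scalar matrices unaffected by $V$, this reads $K_s((x_1,y_1),(x_2,y_2))=Ce_{x_1,y_1}(A)(e_{x_2,y_2}(A))^*C^*$.

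The only genuine content beyond this transport is checking that the integral representation $D+\int_0^z Ce^{tA}B\,dt$ becomes $D+Ce_{x,y}(A)\boxdot(\zeta_1(x,y)B)$ under $V$. For this I would note $\int_0^z Ce^{tA}B\,dt=\sum_{n\ge0}\frac{z^{n+1}}{(n+1)!}CA^nB$, which under $T^{-1}$ corresponds to $\sum_{n\ge0}\frac{z^{n+1}}{(n+1)!}CA^nB$ pre-images $\sum_{n\ge0}\frac{1}{n+1}\,z^{n+1}CA^nB$ in $\cO$; equivalently it is $z\,C(I-zA)^{-1}B$ in the usual sense composed with $T$. Applying $VT$ and recalling $z\mapsto\zeta_1$, $z^k\mapsto\zeta_k$, and that $\boxdot$ implements the ordinary Cauchy product on the $\zeta$'s, one gets $Ce_{x,y}(A)\boxdot(\zeta_1(x,y)B)$ since $e_{x,y}(A)=\sum_{n}\frac{1}{n!}A^n\zeta_n(x,y)$ and $\zeta_1\boxdot\zeta_n=\frac{n!}{(n+1)!}\zeta_{n+1}$ produces precisely the coefficients $\frac{1}{(n+1)!}$.

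I expect the main (though modest) obstacle to be bookkeeping: verifying cleanly that $V$ (more precisely $VT$) turns the $\prot$-product into the $\boxdot$-product on the relevant functions, and that this intertwining is compatible with taking operator ranges and range norms, so that $M_{V\widetilde s}=VM_{\widetilde s}V^*$ and hence $\sqrt{I-M_sM_s^*}=V\sqrt{I-M_{\widetilde s}M_{\widetilde s}^*}V^*$. Once that compatibility is in hand, both the multiplier statement and the kernel formula follow immediately from the corresponding theorem of the previous section, and no further estimates are needed.
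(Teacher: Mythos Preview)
Your approach is exactly the paper's: the paper states that this theorem (and its neighbors) are ``direct consequences of Theorems \ref{s1}--\ref{s5}'' via the unitary $V:\bH\to\cH_{DA}$, and gives no separate proof. Your elaboration of the transport---checking that $V$ carries $\prot$ to $\boxdot$, that $\int_0^z Ce^{tA}B\,dt$ becomes $Ce_{x,y}(A)\boxdot(\zeta_1 B)$, and that the kernel pushes forward accordingly---is correct (modulo a harmless slip in the $T^{-1}$ computation, where the preimage is $\sum_{n\ge0} z^{n+1}CA^nB=zC(I-zA)^{-1}B$, not $\sum\frac{1}{n+1}z^{n+1}CA^nB$; you recover the right expression immediately afterward).
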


\begin{theorem}
A reproducing kernel Hilbert space $H$ of expandable functions is
of the form $H=H(s)$ for some $s\in VT\Sch$ if and only if
\begin{enumerate}
\item $H$ is $\delta_x$-invariant;
\item for every $f\in H$
$$\|\delta_x f\|_H^2\leq\|f\|_H^2-|f(0,0)|^2.$$
\end{enumerate}
\end{theorem}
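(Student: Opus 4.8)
The plan is to reduce this theorem to its already-established analogue in the previous section, Theorem \ref{s5} and its predecessors characterizing $H(s)$ spaces on $\bH$, by transporting everything through the unitary $V:\bH\longrightarrow\cH_{DA}$. Recall that $V$ is defined by $Vz^n=\zeta_n$ (more precisely $V$ on $\bH=T\bH_2$ via $VT:z^n\mapsto\zeta_n$), that $V$ is unitary onto $\cH_{DA}$ by construction of the range norm, and that the intertwining relation $V\dd=\delta_xV$ holds. The key observation is that $\delta_x$-invariance of a closed subspace $H\subseteq\cH_{DA}$ is equivalent, via $H=V\widetilde H$, to $\dd$-invariance of $\widetilde H=V^{-1}H\subseteq\bH$, and likewise the norm inequality $\|\delta_xf\|_H^2\le\|f\|_H^2-|f(0,0)|^2$ for $f\in H$ translates under $V$ into $\|\dd\widetilde f\|_{\widetilde H}^2\le\|\widetilde f\|_{\widetilde H}^2-|\widetilde f(0)|^2$ for $\widetilde f=V^{-1}f\in\widetilde H$, since $V$ is isometric and preserves evaluation at the base point ($f(0,0)$ corresponds to $\widetilde f(0)$ because $\zeta_n(0,0)=\delta_{n,0}$, so $Cf=f(0,0)$ matches $C_0\widetilde f=\widetilde f(0)$).

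The proof then proceeds in two directions. For the forward direction, suppose $H=H(s)$ for some $s\in VT\Sch$. Writing $s=VTs_0$ with $s_0\in\Sch$, Theorem \ref{s2} (transported by $V$) shows that $H(s)$ is the unitary image under $V$ of the corresponding $H(Ts_0)\subseteq\bH$; the latter space satisfies the two conditions of the earlier characterization theorem (the $\dd$-invariant analogue), and pushing forward through $V$ gives $\delta_x$-invariance of $H$ together with the displayed norm inequality. For the converse, suppose $H\subseteq\cH_{DA}$ is $\delta_x$-invariant and satisfies the norm inequality. Pull $H$ back to $\widetilde H=V^{-1}H\subseteq\bH$; then $\widetilde H$ is $\dd$-invariant and satisfies $\|\dd\widetilde f\|^2\le\|\widetilde f\|^2-|\widetilde f(0)|^2$, so by the earlier theorem $\widetilde H=H(s')$ for some $s'\in T\Sch$; writing $s'=Ts_0$ and setting $s=Vs'=VTs_0\in VT\Sch$, we get $H=V\widetilde H=VH(Ts_0)=H(s)$.

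The main thing to be careful about — what I expect to be the only real obstacle — is verifying that the $V$-transported objects genuinely match the intrinsic definitions on the $\cH_{DA}$ side: namely that $M_s$ (the $\boxdot$-multiplication operator by $s=VTs_0$) intertwines with $M_{Ts_0}$ via $M_sV=VM_{Ts_0}$, that consequently $\sqrt{I-M_sM_s^*}\,V=V\sqrt{I-M_{Ts_0}M_{Ts_0}^*}$, and hence that $H(s):=\ran\sqrt{I-M_sM_s^*}$ with the range norm is exactly $VH(Ts_0)$ as a reproducing kernel Hilbert space. This is precisely the content already recorded in the remarks preceding these theorems (the unitarity of $V:\bH\to\cH_{DA}$ and the identities $V\dd=\delta_xV$, $V(e^zA)=e_{x,y}(A)$), so once those intertwinings are invoked the argument is a routine transport. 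I would therefore keep the proof short, stating that the claim is the image under the unitary $V$ of the corresponding characterization theorem for $\bH$ established in the previous section, and spelling out only the translation of the two conditions and of the evaluation functional as above.
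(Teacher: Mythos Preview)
Your approach is exactly the one the paper takes: the paper states explicitly that, since $V:\bH\to\cH_{DA}$ is unitary and satisfies $V\dd=\delta_xV$, all the theorems in this section (including this one) are direct consequences of their analogues (Theorems \ref{s1}--\ref{s5}) in the preceding section, and it does not give a separate proof. Your transport argument via $V$, translating $\dd$-invariance to $\delta_x$-invariance and matching the evaluation functionals $f(0,0)\leftrightarrow\widetilde f(0)$, is precisely the intended justification.
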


\begin{theorem}
The following are equivalent:
\begin{enumerate}
\item An expandable function $f$ is $\boxdot$-rational in the
sense that  for some discrete analytic polynomial $p(x,y)$, not
vanishing at the origin, $p\boxdot f$ is also a discrete analytic
polynomial.
\item $f(x,y)$ is of the form
$$f(x,y)=D+Ce_{x,y}(A)\boxdot (\zeta_1(x,y)B),$$
with $A,B,C,D$ - matrices of suitable dimensions, and
$\|A\|<\sqrt{2},$ and $e_{x,y}(A)$ is as in \eqref{exy}.
\item the columns  of $\delta_x f$ belong to a finite-dimensional
$\delta_x$-invariant space of expandable functions.
\end{enumerate}
\end{theorem}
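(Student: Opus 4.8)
The plan is to deduce this theorem as a transported version of Theorem \ref{s5} via the unitary map $V:\bH\longrightarrow\cH_{DA}$. The key observation, already recorded just before the statement, is that $V\dd=\delta_xV$ and $V(e^{zA})=e_{x,y}(A)$, and that $V$ intertwines the $\prot$ product on $T\cO$ with the $\boxdot$ product on the space of expandable functions: indeed $V T(z^n)=VT(z^n)=\zeta_n/n!\cdot n!$ unwinds to $\zeta_n$, and the definition $\zeta_n\boxdot\zeta_m=\frac{m!n!}{(m+n)!}\zeta_{m+n}$ is exactly the image under $VT$ of $z^n z^m=z^{m+n}$ after the $T$-rescaling, so that $(VTf)\boxdot(VTg)=VT(fg)$. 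With this dictionary in hand, each of the three conditions in the theorem is the $V$-image of the corresponding condition in Theorem \ref{s5}, applied to $f_0\in\cO$ with $f=VTf_0$.

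First I would make the intertwining precise: show that $f$ is an expandable function if and only if $f=VTf_0$ for some $f_0\in\cO$ analytic in a neighbourhood of $\{|z|\le 1/\sqrt2\}$ (this is the content of the remark that $VT$ maps such functions onto the expandable ones, using the radius-of-convergence estimate \eqref{esti} and Theorem \ref{lim2}), and that under this correspondence a discrete analytic polynomial $p(x,y)$ corresponds to a genuine polynomial $p_0(z)$ with $p_0(0)=p(0,0)$. Then $p\boxdot f$ is a discrete analytic polynomial precisely when $p_0\prot f_0$ is a polynomial, so condition $(1)$ here is equivalent to condition $(1)$ of Theorem \ref{s5} for $f_0$; the non-vanishing of $p$ at the origin corresponds to $p_0(0)\ne0$, i.e. $p_0$ not vanishing at the origin.

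Next, condition $(2)$: by Theorem \ref{s5}, $f_0$ is $\prot$-rational iff $f_0(z)=D+\int_0^z C e^{tA}B\,dt$ with $A,B,C,D$ matrices. Applying $VT$ and using $V(e^{zA})=e_{x,y}(A)$ together with $VT(\int_0^z (\cdot)\,dt)$, the integral $\int_0^z C e^{tA}B\,dt=\sum_{n\ge0}\frac{z^{n+1}}{(n+1)!}CA^nB$ is carried to $\sum_{n\ge0}\frac{1}{(n+1)!}CA^nB\,\zeta_{n+1}=Ce_{x,y}(A)\boxdot(\zeta_1(x,y)B)$, since $\zeta_1\boxdot\zeta_n=\frac{1\cdot n!}{(n+1)!}\zeta_{n+1}=\frac{1}{n+1}\zeta_{n+1}$; this gives exactly the stated formula. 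The constraint $\|A\|<\sqrt2$ is what guarantees that $e_{x,y}(A)$ is well-defined as an expandable function — i.e. that the associated scalar function $z\mapsto C(I-zA)^{-1}B$ is analytic in a neighbourhood of $\{|z|\le1/\sqrt2\}$ — so this is the place where the constant $\sqrt2$ from Theorem \ref{lim2} enters and must be inserted (as opposed to the $1$ implicit in the disk $\DD$ for $\bH$). Finally, condition $(3)$ transports verbatim: since $V$ is unitary and $V\dd=\delta_xV$, a subspace is $\dd$-invariant and finite-dimensional iff its image is $\delta_x$-invariant and finite-dimensional, and the columns of $\delta_x f$ are the $V$-images of the columns of $\dd f_0$.

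The main obstacle I anticipate is bookkeeping the rescaling factors correctly — verifying that $VT$ really does intertwine $\prot$ with $\boxdot$ and that the integral representation transforms into the $\boxdot$-convolution with $\zeta_1 B$ with the right combinatorial coefficients — and, relatedly, pinning down the exact domain of validity so that $\|A\|<\sqrt2$ is both necessary and sufficient for $f=VTf_0$ to be expandable. Once the dictionary $f\leftrightarrow f_0$, $p\leftrightarrow p_0$, $\boxdot\leftrightarrow\prot$, $\delta_x\leftrightarrow\dd$, $e_{x,y}(A)\leftrightarrow e^{zA}$ is established, the equivalence of $(1)$, $(2)$, $(3)$ follows immediately by pushing Theorem \ref{s5} through $V$, exactly as the preceding transported theorems in this section are obtained.
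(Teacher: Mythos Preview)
Your proposal is correct and follows precisely the approach the paper indicates: the paper gives no separate proof of this theorem at all, stating only that ``since $V:\bH\longrightarrow \cH_{DA}$ is unitary, the following theorems are direct consequences of Theorems \ref{s1}--\ref{s5} in the previous section.'' Your write-up is in fact more detailed than the paper's treatment, carefully verifying the intertwining $(VTf)\boxdot(VTg)=VT(fg)$ and the transport of the integral representation into the $\boxdot$-convolution with $\zeta_1 B$, which the paper leaves implicit.
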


\begin{theorem}
Let $s\in T\Sch$. Then $s$ admits the representation
$$s(x,y)=D+Ce_{x,y}(\delta_x)\boxdot (\zeta_1(x,y)B),$$
where
$$\begin{pmatrix} \delta_x & B\\ C& D\end{pmatrix}:
\begin{pmatrix} H(s)\\ \CC\end{pmatrix}\longrightarrow
\begin{pmatrix} H(s)\\ \CC\end{pmatrix}$$ is a coisometry given by
\[
\nonumber
\begin{split}
B1&=s^\prime,\\
Cf&=f(0),\\
D1&=s(0).
\end{split}
\]
\end{theorem}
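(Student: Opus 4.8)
The plan is to deduce the statement from the realization theorem for $T\Sch$ proved in the preceding section (the one giving $s(z)=D+\int_0^zCe^{t\dd}Bdt$ with $\begin{pmatrix}\dd&B\\ C&D\end{pmatrix}$ a coisometry on $H(s)\oplus\CC$), by transporting it through the unitary $V$; here $s$ is to be read in $VT\Sch$, i.e.\ $s=VTs_0$ with $s_0\in\Sch$. The facts about $V$ that do the work are: $V:\bH\to\cH_{DA}$ is unitary; it intertwines $\dd$ with $\delta_x$, i.e.\ $V\dd=\delta_x V$, hence $V\dd^n=\delta_x^nV$; it sends the entire function $z\mapsto e^{zA}$ to the discrete analytic function $(x,y)\mapsto e_{x,y}(A)$ for every bounded operator $A$; and it carries the $\prot$-product to the $\boxdot$-product, $V(f\prot g)=(Vf)\boxdot(Vg)$. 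This last identity is checked on the orthonormal bases, where after the normalization $\zeta_j\boxdot\zeta_m=\frac{j!m!}{(j+m)!}\zeta_{j+m}$ both products reduce to $e_j\star e_m=e_{j+m}$; it extends to the operator-valued series occurring below because $s_0\in\Sch$ forces $\|\dd|_{H(s)}\|\le1$, hence $\|\delta_x|_{H(s)}\|\le1<\sqrt2$, so that $e_{x,y}(\delta_x)=\sum_n\zeta_n(x,y)\delta_x^n/n!$ converges in operator norm for every $(x,y)$, $\sqrt2$ being precisely the radius furnished by Theorem \ref{lim2}.

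The argument then takes two short steps. First, in the $\bH$-picture rewrite the integral term as a $\prot$-product: since $\int_0^zCe^{t\dd}Bdt=\sum_{n\ge0}\frac{z^{n+1}}{(n+1)!}C\dd^nB$ and, straight from the definition of $\prot$ on $T\cO$, $z\prot\big(Ce^{z\dd}B\big)=T\big(z\cdot\sum_nz^nC\dd^nB\big)=\sum_{n\ge0}\frac{z^{n+1}}{(n+1)!}C\dd^nB$ as well, the realization theorem reads $s(z)=D+z\prot\big(Ce^{z\dd}B\big)$. Second, apply $V$ in the $z$-variable: using $V(z)=\zeta_1$, $V\dd^n=\delta_x^nV$ applied to the constant-in-$z$ coefficient operators $C\dd^nB$, and $V(f\prot g)=(Vf)\boxdot(Vg)$ together with the commutativity of $\boxdot$, one obtains $s(x,y)=D+\zeta_1(x,y)\boxdot\big(Ce_{x,y}(\delta_x)B\big)=D+Ce_{x,y}(\delta_x)\boxdot\big(\zeta_1(x,y)B\big)$, where now $B,C,D$ denote the entries of the colligation conjugated by $\mathrm{diag}(V,1)$. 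That conjugation sends a coisometry to a coisometry, and the data transport as $B1=V(\dd s)=\delta_x s$, $Cf=f(0,0)$, $D1=s(0,0)$, which are the stated formulas once the $s'$ of the statement is read in the discrete picture as $\delta_x s$.

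The only point that needs genuine care is the simultaneous coherence of these transports: the one unitary $V$ must at once realize $\dd\mapsto\delta_x$ inside $e^{z\dd}$, identify the model space $H(s_0)$ sitting in $\bH$ with the model space $H(s)$ sitting in $\cH_{DA}$ (so that $B$ and $C$ still make sense after conjugation and the coisometry property persists), and convert $\prot$ into $\boxdot$; one must also confirm that the operator-norm convergence of $e_{x,y}(\delta_x)$ is uniform enough to legitimize applying $V$ term by term to the defining series. Since all of these are instances of the single unitary $V$ and of the relation $V\dd V^{-1}=\delta_x$, the verification is routine once set up, and the substance of the proof is exactly the $\bH$-side statement already established in the preceding section.
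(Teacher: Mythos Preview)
Your approach is exactly the one the paper takes: the paper offers no detailed proof of this theorem but simply declares that, since $V:\bH\to\cH_{DA}$ is unitary with $V\dd=\delta_x V$ and $V(e^{zA})=e_{x,y}(A)$, the results of Section~9 (in particular the realization $s(z)=D+\int_0^zCe^{t\dd}B\,dt$) transfer directly. Your write-up supplies the missing details of that transport---rewriting the integral as $z\prot(Ce^{z\dd}B)$, applying $V$ to convert $\prot$ into $\boxdot$ and $z$ into $\zeta_1$, and conjugating the colligation by ${\rm diag}(V,1)$---and you correctly flag that the hypothesis should read $s\in VT\Sch$ with $B1=\delta_x s$, consistent with the earlier, non-typo version of the same theorem in the paper.
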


\begin{theorem}
The following are equivalent:
\begin{enumerate}
\item An expandable function $f$ is $\boxdot$-rational in the
sense that  for some discrete analytic polynomial $p(x,y)$, not
vanishing at the origin, $p\boxdot f$ is also a discrete analytic
polynomial.
\item $f(x,y)$ is of the form
$$f(x,y)=D+Ce_{x,y}(A)\boxdot (\zeta_1(x,y)B),$$
with $A,B,C,D$ - matrices of suitable dimensions, and
$\|A\|<\sqrt{2},$ and $e_{x,y}(A)$ is as in \eqref{exy}.
\item the columns  of $\delta_x f$ belong to a finite-dimensional
$\delta_x$-invariant space.
\end{enumerate}
\end{theorem}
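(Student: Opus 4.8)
The plan is to reduce the statement to the corresponding theorem of the previous section (Theorem \ref{s5}) via the unitary map $V:\bH\to\cH_{DA}$, exactly as the paper does for the other theorems transplanted from Section~9. First I would observe that the intertwining relations $V\dd=\delta_xV$ and $V(e^{zA})=e_{x,y}(A)$, together with the fact that $V$ carries the $\prot$-product to the $\boxdot$-product (since $\zeta_n\boxdot\zeta_m=\frac{m!n!}{(m+n)!}\zeta_{m+n}$ is precisely the image under $V$ of $T(z^nz^m)=\frac{z^{n+m}}{(m+n)!}$, while $Tz^n=\frac{z^n}{n!}$ maps to $\frac{1}{n!}\zeta_n=e_n$), give a dictionary under which every ingredient of Theorem~\ref{s5} translates verbatim. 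Thus an expandable $f$ is $\boxdot$-rational in the stated sense if and only if $V^{-1}f\in T\cO$ is $\prot$-rational, because $V$ sends discrete analytic polynomials to polynomials in $z$ and conversely, and a discrete analytic polynomial $p(x,y)$ not vanishing at the origin corresponds to a polynomial $p(z)$ not vanishing at $z=0$.

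Next I would run the three equivalences in turn. For $(1)\Leftrightarrow(2)$: by Theorem~\ref{s5}, $V^{-1}f$ is $\prot$-rational iff $V^{-1}f(z)=D+\int_0^z Ce^{tA}B\,dt$ for matrices $A,B,C,D$; applying $V$ and using $V(e^{zA})=e_{x,y}(A)$ and $V\dd=\delta_xV$ (so that $\int_0^z(\cdot)\,dt$ passes to the $\boxdot$-primitive, which is multiplication-by-$\zeta_1$ in the $\boxdot$-sense) yields $f(x,y)=D+Ce_{x,y}(A)\boxdot(\zeta_1(x,y)B)$; the bound $\|A\|<\sqrt2$ is exactly the condition guaranteeing that $e_{x,y}(A)$ converges and lands in the space of expandable functions, by Theorem~\ref{lim2} and the Cauchy estimate \eqref{esti}. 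For $(1)\Leftrightarrow(3)$: the columns of $\delta_x f$ span a finite-dimensional $\delta_x$-invariant space iff, under $V^{-1}$, the columns of $\dd(V^{-1}f)=R_0(V^{-1}f)$ span a finite-dimensional $R_0$-invariant (equivalently $\dd$-invariant) space, which is condition $(3)$ of Theorem~\ref{s5}; one must only note that a $\delta_x$-invariant subspace of $\cH_{DA}$ corresponds under $V^{-1}$ to an $R_0$-invariant (hence $\dd$-invariant) subspace of $T\cO$, which is immediate from $V\dd=\delta_x V$.

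The main obstacle, such as it is, is bookkeeping rather than mathematics: one must check carefully that the $\boxdot$-product really is the transplant of the $\prot$-product (not the C-K product $\odot$), and that the $\boxdot$-primitive of $\zeta_n(x,y)B$ equals $\frac{1}{n+1}\zeta_{n+1}(x,y)B$, i.e.\ that $\int_0^z(\cdot)dt$ corresponds under $V$ to composition with the map $\zeta_n\mapsto\frac{1}{n+1}\zeta_{n+1}$; this is exactly what makes the term $Ce_{x,y}(A)\boxdot(\zeta_1(x,y)B)$ the image of $\int_0^zCe^{tA}B\,dt$. Everything else is a direct relabelling. I would therefore present the proof in one compact paragraph: state the dictionary, invoke Theorem~\ref{s5}, and conclude that each of $(1),(2),(3)$ is the $V$-image of the correspondingly numbered statement there, with the extra clause $\|A\|<\sqrt2$ recording the radius-of-convergence constraint inherited from $\sqrt2$ being the radius of analyticity of $e_{x,y}(z)$.

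\begin{proof}
The unitary $V:\bH\to\cH_{DA}$, $z^n\mapsto\zeta_n$, satisfies $V\dd=\delta_x V$ and $V(e^{zA})=e_{x,y}(A)$, and carries the $\prot$-product on $T\cO$ to the $\boxdot$-product on expandable functions, since $V$ sends $Tz^n=\tfrac{z^n}{n!}$ to $e_n=\tfrac1{n!}\zeta_n$ and $T(z^nz^m)=\tfrac{z^{n+m}}{(m+n)!}$ to $\tfrac{m!n!}{(m+n)!}\zeta_{m+n}=\zeta_n\boxdot\zeta_m$. Moreover $V$ restricts to a bijection between polynomials in $z$ and discrete analytic polynomials, matching non-vanishing at $z=0$ with non-vanishing at $(0,0)$, and between $\dd$-invariant (equivalently $R_0$-invariant) subspaces of $T\cO$ and $\delta_x$-invariant subspaces of $\cH_{DA}$. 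Under this dictionary, statement $(1)$ is the $V$-image of statement $(1)$ of Theorem \ref{s5}, and likewise for $(3)$, while the representation in statement $(2)$ of Theorem \ref{s5} maps under $V$, using $V(e^{zA})=e_{x,y}(A)$ and the fact that $\int_0^z(\cdot)\,dt$ corresponds to $\zeta_n\mapsto\tfrac1{n+1}\zeta_{n+1}$, to $f(x,y)=D+Ce_{x,y}(A)\boxdot(\zeta_1(x,y)B)$; the additional requirement $\|A\|<\sqrt2$ is precisely the condition ensuring $e_{x,y}(A)$ is (a matrix of) expandable functions, by Theorem \ref{lim2} and the estimate \eqref{esti}. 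Hence the three conditions are equivalent by Theorem \ref{s5}.
\end{proof}
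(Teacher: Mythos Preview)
Your proposal is correct and takes exactly the approach the paper itself indicates: the paper does not give a separate proof of this theorem but states at the start of Section~\ref{next} that, since $V:\bH\to\cH_{DA}$ is unitary, the listed theorems are direct consequences of Theorems~\ref{s1}--\ref{s5}. Your argument simply makes this reduction explicit, verifying the dictionary $V\dd=\delta_xV$, $V(e^{zA})=e_{x,y}(A)$, $V$ intertwines $\prot$ with $\boxdot$, polynomials correspond to discrete analytic polynomials, and the extra clause $\|A\|<\sqrt{2}$ records the expandability constraint coming from Theorem~\ref{lim2}.
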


\bibliographystyle{plain}
%\bibliography{/users/faculty/math/dany/bib/all}
%\bibliography{all}
\def\cprime{$'$} \def\lfhook#1{\setbox0=\hbox{#1}{\ooalign{\hidewidth
  \lower1.5ex\hbox{'}\hidewidth\crcr\unhbox0}}} \def\cprime{$'$}
  \def\cprime{$'$} \def\cprime{$'$} \def\cprime{$'$} \def\cprime{$'$}

\end{document}